\def\id{\mathop{\rm id}\nolimits}
\def\0D{\Delta^{(0)}}
\def\1D{\Delta^{(1)}}
\newcommand{\longto}{\longrightarrow}
\newtheorem{theorem}{Theorem}[section]
\newtheorem{remark}[theorem]{Remark}
\newtheorem{proposition}[theorem]{Proposition}
\newtheorem{lemma}[theorem]{Lemma}
\newtheorem{corollary}[theorem]{Corollary}
\newtheorem{example}[theorem]{Example}
\newtheorem{definition}[theorem]{Definition}
\def\build#1_#2^#3{\mathrel{
\mathop{\kern 0pt#1}\limits_{#2}^{#3}}}
\numberwithin{equation}{section}
\def\part{\partial}
\def\text{\hbox}
\def\build#1_#2^#3{\mathrel{
\mathop{\kern 0pt#1}\limits_{#2}^{#3}}}
\numberwithin{equation}{section}
\newcommand{\comment}[1]{\relax}
\title[From pushouts of graphs to pullbacks of graph algebras]{\vspace*{-15mm} From length-preserving pushouts of graphs\\
 to one-surjective pullbacks of graph algebras}
\author[P. M.~Hajac]{Piotr M.~Hajac}
\address[P. M.~Hajac]{Instytut Matematyczny, Polska Akademia Nauk, ul. \'Sniadeckich 8, Warszawa, 00-656 Poland}
\email{pmh@impan.pl }
\author[M.~Tobolski]{Mariusz Tobolski}
\address[M.~Tobolski]{Instytut Matematyczny, Uniwersytet Wroc\l{}awski, pl. Grunwaldzki~2, Wroc\l{}aw, 50-384 Poland}
\email{mariusz.tobolski@math.uni.wroc.pl}
\begin{document}
\baselineskip14.5pt
\begin{abstract}
The unions of directed graphs are the simplest examples of pushouts of directed graphs. The conditions under which 
they contravariantly induce surjective gauge-equivariant pullbacks of graph C*-algebras have been well
studied and vastly instantiated in noncommutative topology (e.g., quantum balls and spheres).
Herein, we go beyond the unions of graphs to systematically determine optimal conditions for more 
general length-preserving pushouts 
of graphs under which
they contravariantly induce graded pullbacks of path algebras, Leavitt path algebras, and  graph C*-algebras.
Our pullbacks are surjective only on one side, as dictated by natural examples and K-theory.
The proposed new approach enlarges the scope of applications from 
admissible subgraphs (also called quotient graphs) to generalizations of unlabeled foldings of Stallings and collapsing
the line graphs of graphs to initial graphs. Moreover, we introduce the concept of locally derived graphs, which substantially extends
 the paradigm of derived graphs (or skew products of graphs), and use the projection foldings from locally derived graphs to their base (or voltage) graphs
 to obtain one-surjective pullbacks of graph C*-algebras.
\end{abstract}
\subjclass{Primary 16S88, Secondary 46L80, 46L85}
\keywords{Locally derived graphs, one-injective pushouts of graphs, contravariant functors, path algebras, Leavitt path algebras, 
equivariant pullbacks of  graph C*-algebras, noncommutative topology}
\maketitle
\vspace*{-10mm}
{\tableofcontents}
\parskip3.5mm

\section{Introduction}
\noindent
Graph theory is one of the most accessible parts of combinatorics, and one often uses graphs to visualize and study abstract 
mathematical objects. For instance, the structure of a~group can be encoded in its Cayley graph. In the same vein, with every 
(unital, basic, connected) finite-dimensional associative algebra over an algebraically closed field, we can associate a~directed graph 
(or a quiver) from which the algebra in question can be recovered via a path-algebra construction (e.g., see~\cite{ass06}). 
Better still, every finite-dimensional hereditary associative algebra over an algebraically closed field is Morita equivalent to a path 
algebra. These two results show the importance of path algebras in the classification and representation theory of finite-dimensional 
associative algebras. Moreover, Leavitt path algebras, which provide an algebraic backbone of graph C*-algebras, are defined
as quotients of path algebras (e.g., see~\cite{aasm17}).

The construction of path algebras, Leavitt path algebras, and graph C*-algebras can be considered as a functor from a category of 
directed graphs to the category of algebras and algebra homomorphisms in two different ways: covariant and contravariant. The former was explored in
\cite{kr-g09,j-s02,amp07,ht23}. This paper is concerned with the latter. 
The standard category of graphs and graph homomorphism was spectacularly successful in the work of Stallings \cite{s-jr83}, and the 
contravariant induction for admissible subgraphs (also called  quotient graphs)  is ubiquitous, including natural eamples in
 noncommutative topology explored by Hong and Szyma\'nski~\cite{hs02,hs08}. 
However, only considering subgraphs restricts  the standard contravariant functor to
injective graph homomorphisms, which is at odds with an unlabeled Stallings folding (Example~\ref{stallings}),
collapsing the line graph of a Hawaiian earring graph (Equation~\eqref{cuntzfold}), and
shrinking loops (Example~\ref{shrinking}), which all
indentify edges and vertices.

The first  aim of this paper is to unravel optimal conditions for graph homomorphisms to contravariantly induce
graded algebra homomorphisms between path algebras, Leavitt path algebras, and graph C*-algebras. We achieve it in
Lemma~\ref{contrafp}, Theorem~\ref{contralthm}, and Corollary~\ref{corgc*}, respectively, by fine tuning subcategories
of directed graphs. We thus arrive at the category of graphs and admissible graph homomorphisms (see Section~2) as a domain
of a contravariant functor to the category of C*-algebras and $*$-homomorphisms. It turns out that this contravariant functor
is a special case of Katsura's contravariant functor 
from the category of topological graphs and factor maps
to the category of C*-algebras and $*$-homomorphisms.

We introduce and study new types of admissible graph homomorphism.
In particular, as a basic
non-trivial example of a non-injective graph homomorphism contravariantly inducing a gauge-equivariant $*$-homomorphism of graph 
C*-algebras, we have an unlabeled Stallings folding. In this spirit, we define a generalized folding
(Definition~\ref{genfold}) as an example of a non-injective (except in the trivial case) admissible graph homomorphism. 
Better still,  we show that a well-known isomorphism between 
the graph C*-algebra of the line graph of a row-finite
graph without sinks and the graph C*-algebra of the initial graph is contravariantly induced from a generically non-injective
graph homomorphism. Moreover, we significantly extend the concept of derived graphs (which include all Cayley graphs of finite groups) 
by defining locally derived graphs,
and show that, for families of non-trivial finite groups, projection foldings from locally derived graphs to their base  graphs
are non-injective admissible graph homomorphisms. For starters, we exemplify such a graph homomorphism by 
shrinking vertex-simple loops of length $n$ to the loop of length one, 
which induces  inclusions of  the circle C*-algebra $C(S^1)$  in 
$C(S^1)\!\otimes\! M_n(\mathbb{C})$. Then, we construct
$M_2(\mathcal{O}_3)$ as the graph C*-algebra of a locally derived graph of the Hawaiian earring graph
with two loops, and obtain a unital $*$-homomorphism $\mathcal{O}_2\to M_2(\mathcal{O}_3)$.

Furthermore, an unexpected and important application of the contravariant induction was found recently in~\cite{hl24}. Therein, the authors
construct a $U(1)$-equivariant unital $*$-homomorphism $\mathcal{O}_N\to M_k(\mathcal{O}_M)$ whenver $M-1=k(N-1)$, which is a necessary
condition given by K-theory. The construction is given by the contravariant functor applied to a non-injective admissible graph homomorphism.
It clearly exemplifies in action the target-bijectivity condition, which is the pivotal condition of admissibility. This application of contravariant
functoriality complements the application of covariant functoriality \cite{ht23} unravelling the unital \mbox{$*$-homomorphisms} $\mathcal{O}_M\to \mathcal{O}_N$
of Kawamura \cite[Lemma~2.1 and Section~6.1]{k-k09} (cf.~\cite[Section~3.3]{j-c77}) constructed whenever the same congruence $M-1=k(N-1)$, necessitated by K-theory, is satisfied.

The second and principal outcome of this article are pushout-to-pullback theorems: 
for path algebras (Theorem~\ref{pushpullpaththm}),
 Leavitt path algebras (Theorem~\ref{main}), and graph C*-algebras (Theorem~\ref{c*cor}). 
These are key applications of the aforementioned contravariant-induction results. To the best of our knowledge, these are the
first pushout-to-pullback theorems where pullbacks are surjective only on one side, as is the case in the mixed-pullback theorems
of \cite{cht21} and~\cite{ht23}. However, even in these mixed-pullback theorems, gluing of vertices is not allowed, ruling out
the above mentioned examples. It is worth mentioning here that one-surjective pullbacks of C*-algebras
form an ideal setting for noncommutative topology, as the Mayer--Vietoris technology still works while new types of examples
are within the scope of the theory.

As mentioned before, 
the contravariant induction  was already studied for injective graph homomorphisms. It starts in \cite[Definition~2.4.11]{aasm17}
(quotient graphs), followed by \cite{hrt20} (admissible and strongly admissible inclusions), which was recently generalized in
\cite{bs22} (breaking vertices allowed).  Our motivation comes from noncommutative topology, which includes the theory of 
\mbox{$q$-de}\-formations of 
algebras of functions on certain compact topological spaces. In~\cite{hs08}, it was shown that pushouts (unions) of graphs lead to pullback structures of the C*-algebras of even quantum
spheres. This remarkable feature was explored in~\cite{hrt20,bs22}, where 
more general pushout-to-pullback theorems were proved. Similar results can be found in the context of higher-rank graph 
C*-algebras~\cite{kpsw16} and Cuntz--Pimsner algebras~\cite{rs11}. However, when restricted to graph C*-algebras, these results
have  limited scope as they assume graphs to be without sinks (\cite{kpsw16}) or to be row finite (\cite{rs11}).

In Section~2, we consider three conditions on graph homomorphisms: properness, target bijectivity, and regularity. They turn out
to be the discrete-topology versions of Katsura's conditions \cite[Definition~2.1 and Definition~2.6]{t-k06}. We
prove that they define subcategories that in Section~4 are domains of contravariant functors for path algebras and Leavitt
path algebras, respectively. On the way, in Section~3, we systematically study pushouts of graphs proving many needed
technical results. In Section~5, we unravel new types of admissible graph homomorphism focusing on
non-injective admissible graph homomorphism.
Section~6 crowns the paper with pushout-to-pullback theorems for path algebras, Leavitt path algebras,
and graph C*-algebras. The pushout-to-pullback result for graph C*-algebras is obtained as a corollary of its Leavitt
counterpart using a beautiful theorem of Chirvasitu~\cite{a-c22}. 
Finally, we end the paper with Section~7 devoted to applications in noncommutative
topology, which involve multichamber even quantum spheres,  the Cuntz algebra~$\mathcal{O}_2$, and the
boundary quantum spheres of even Hong--Szyma\'nski quantum balls.

\section{Graphs and morphisms}

\subsection{Directed graphs}

A {\em graph} (directed graph, quiver) is a quadruple $E:=(E^0,E^1,s_E,t_E)$, where:
\begin{itemize}
\item $E^0$ is the set of {\em vertices},
\item $E^1$ is the set of {\em edges} (arrows),
\item $E^1\overset{s_E}{\to}E^0$ is the {\em source} map assigning to each edge its beginning,
\item $E^1\overset{t_E}{\to}E^0$ is the {\em target} (range) map assigning to each edge its end.
\end{itemize}

Let $v$ be a vertex in a graph $E$. It is called a {\em sink} iff $s_E^{-1}(v)=\emptyset$, it is called a {\em source} iff $t_E^{-1}(v)=\emptyset$, and it is called {\em regular} iff it is not a sink and $|s_E^{-1}(v)|<\infty$. The subset of regular vertices 
of a graph $E$ is denoted by ${\rm reg}(E)$. A {\em finite path} in $E$ is a vertex or a finite sequence $(e_1,\ldots,e_n)$ of edges 
satisfying 
\begin{equation}
t_E(e_1)=s_E(e_2),\quad t_E(e_2)=s_E(e_3),\quad \ldots,\quad t_E(e_{n-1})=s_E(e_n).
\end{equation} 
We denote the set of all finite paths in $E$ by~$FP(E)$. The beginning $s_E(p_n)$ of $p_n$ is $s_E(e_1)$ and the end $t_E(p_n)$ of 
$p_n$ is $t_E(e_n)$. The beginning and the end of a vertex is the vertex itself. Thus we extend the source and target maps to 
$s_{PE},t_{PE}\colon FP(E)\to E^0$. Vertices are considered as finite paths of length~$0$. The {\em length} of a finite path that is 
not a vertex is the number of elements in the sequence. In particular, every edge is a path of length~$1$. We denote the set of all paths of length $n$ 
by~$FP_n(E)$.


\subsection{Categories of graphs}

Let $E:=(E^0,E^1,s_E,t_E)$ and $F:=(F^0,F^1,s_F,t_F)$ be graphs. A~{\em homomorphism} from $E$ to $F$ is a pair of maps 
\begin{equation}
(f^0:E^0\to F^0,f^1:E^1\to F^1)
\end{equation} 
satisfying the conditions:
\begin{equation} \label{graphom}
s_F\circ f^1=f^0\circ s_E\,,\qquad t_F\circ f^1=f^0\circ t_E\,.
\end{equation}
We denote the category of graphs and graph homomorphisms by {\rm OG} and call it the {\em standard category of graphs}.
We call a graph homomorphism $(f^0,f^1)$ \emph{injective} or \emph{surjective} iff both $f^0$ and $f^1$ are injective or 
surjective, respectively.

If $(f^0,f^1)\colon E\to F$ is a homomorphism of graphs, then we define $f\colon FP(E)\to FP(F)$ as follows
\begin{gather}
\forall\; v\in E^0\colon f(v):=f^0(v),\quad
\forall\; e\in E^1\colon f(e):=f^1(e),\nonumber\\
\forall\; (e_1,\ldots,e_n)\in FP(E)\colon f( (e_1,\ldots,e_n)):= (f^1(e_1),\ldots,f^1(e_n))\in FP(F).
\end{gather}
If $(f^0,f^1)$ is injective or surjective, then so is $f$. Note also that now we can think of $FP$ as a covariant
functor from the category $\mathrm{OG}$ of graphs and graph homomorphisms to the category of sets and maps.

\begin{definition}
A {\em proper homomorphism} of graphs $f\colon E\to F$ is a homomorphism of graphs
whose both maps are finite-to-one, i.e.\
\begin{equation*}
\forall\;v\in F^0\colon |(f^0)^{-1}(v)|<\infty,\quad
\forall\;e\in F^1\colon |(f^1)^{-1}(e)|<\infty.
\end{equation*}
We denote the category of graphs and proper graph homomorphisms by {\rm POG}. 
\end{definition}
\noindent
First, observe that {\rm POG} is indeed a subcategory of {\rm OG} due to the fact that the
 composition of finite-to-one maps is again a 
finite-to-one map. Moreover, if $(f^0,f^1)\colon E\to F$ is a proper homomorphism of
 graphs, then the induced map $f\colon FP(E)\to FP(F)$ is finite to one. Indeed, if $p\in FP(E)$ is a vertex, 
then $f^{-1}(p)=(f^0)^{-1}(p)$ is a finite set. 
Next, let $p=(p_1,\ldots,p_n)$, $p_i\in F^1$ for all $1\leq i\leq n$,  and $q\in f^{-1}(p)$. Then 
we can write $q=(q_1,\ldots,q_n)$, $q_i\in E^1$ for all $1\leq i\leq n$, and
$f(q)=(f^1(q_1),\ldots,f^1(q_n))=(p_1,\ldots,p_n)$. Hence, $q_i\in (f^1)^{-1}(p_i)$ for all $1\leq i\leq n$,
so the number of elements in $f^{-1}(p)$ is limited by the number of elements in 
\begin{equation}\label{finitetoone}
(f^1)^{-1}(p_1)\times\ldots\times (f^1)^{-1}(p_i)\times\ldots\times (f^1)^{-1}(p_n),
\end{equation}
which is a finite set. Finally, observe also that, much as before, 
we can view $FP$ as a covariant functor from the category {\rm POG} to the category of sets and finite-to-one maps.

\begin{definition}
We say that a graph homomorphism $(f^0,f^1):E\to F$ satisfies the {\em target-injectivity (target-surjectivity) condition} if
\begin{equation}\label{EU}
\forall\; x\in F^1\colon (f^1)^{-1}(x)\ni e\longmapsto t_E(e) \in (f^0)^{-1}(t_F(x))\;\text{ is injective (surjective)}.
\end{equation}
We say that $(f^0,f^1)$ satisfies the {\em target-bijectivity condition} if it satisfies both the target-injectivity condition
and the target-surjectivity condition.
\end{definition}
\noindent
Note that the bijectivity of $(f^0,f^1)$ implies the target bijectivity of~$(f^0,f^1)$, so \eqref{EU} is satisfied for 
$(\id_{E^0},\id_{E^1})$. However, 
an injective homomorphism of graphs need not satisfy the target-bijectivity condition. Indeed, mapping
the one-vertex graph into the one-loop graph by assigning the vertex to the base of the loop is an
injective graph homomorphism but the target-bijectivity condition fails. 
Next, a graph homomorphism $(f^0,f^1)\colon E\to F$ that is 
injective on vertices and satisfies the target-bijectivity 
condition is injective: if $e_1$ and $e_2$ are edges such that $f^1(e_1)=f^1(e_2)$, then $e_1,e_2\in (f^1)^{-1}(f^1(e_1))$
and $|(f^0)^{-1}\big(t_F(f^1(e_1))\big)|=1$, so $e_1=e_2$.

Next, we present a more conceptual version of the target-bijectivity condition.
\begin{proposition}\label{targetpull}
A graph homomorphism $(f^0,f^1):E\to F$ satisfies the target-bijectivity condition if and only if the commutative diagram
\begin{equation}\label{pulltarget}
\xymatrix{
&
E^1
\ar[dl]_{t_E}
\ar[dr]^{f^1}
&\\
E^0 \ar[dr]_{f^0}
& & 
F^1 \ar[dl]^{t_F}
\\
&
F^0
&
}
\end{equation}
given by~\eqref{graphom} is a pullback diagram in the category of sets and maps.
\end{proposition}
\begin{proof}
Recall that the pullback of $f^0:E^0\to F^0$ and $t_F:F^1\to F^0$ in the category of sets and maps is the fibered product
\begin{equation}
E^0\underset{F^0}{\times}F^1:=\{(v,x)\in E^0\times F^1~|~f^0(v)=t_F(x)\}
\end{equation}
together with the projections onto each component. Since the diagram \eqref{pulltarget} is commutative, the universal
property of the pullback manifests itself in the existence of the map
\begin{equation}
\Phi:E^1\longrightarrow E^0\underset{F^0}{\times}F^1,\qquad
e\longmapsto (t_E(e),f^1(e)).
\end{equation}
We have to prove that $\Phi$ is a bijection $\iff$ $(f^0,f^1)$ satisfies the target-bijectivity condition. In fact, we will prove that 
$\Phi$ is injective if and only if $(f^0,f^1)$ satisfies the target-injectivity condition, and that 
$\Phi$ is surjective if and only if $(f^0,f^1)$ satisfies the target-surjectivity condition.

First, note that \eqref{EU} defines a family of maps labelled by $x\in F^1$:
\begin{equation}
E^1\supseteq (f^1)^{-1}(x)\ni e\stackrel{\Phi_x}{\longmapsto}
 (t_E(e),f^1(e)) \in \big((f^0)^{-1}(t_F(x)),x\big)\subseteq E^0\underset{F^0}{\times}F^1.
\end{equation}
It is clear that the target-injectivity of $(f^0,f^1)$ is equivalent to the injectivity of $\Phi_x$ for all $x\in F^1$,
and that the target-surjectivity of $(f^0,f^1)$ is tantamount to the surjectivity of $\Phi_x$ for all $x\in F^1$.
Observe also that, if $x\neq y$, then $\Phi_x$ and $\Phi_y$ have disjoint domains and counterdomains, and the union of all domains 
is~$E^1$. Now, since $\Phi$ agrees with $\Phi_x$ on the domain of the latter for any $x\in F^1$, 
it follows immediately that the target-injectivity condition of $(f^0,f^1)$
is equivalent to the injectivity of~$\Phi$.

Assume next the target-surjectivity of~$(f^0,f^1)$. This implies that
the union of the counterdomains of $\Phi_x$ is $E^0{\times}_{F^0}F^1$. Indeed, take any
$(v,x)\in E^0{\times}_{F^0}F^1$. Then $v\in (f^0)^{-1}(t_F(x))$, and there exists $e\in E^1$ such that $t_E(e)=v$
and $f^1(e)=x$, so $(v,x)\in \Phi_x((f^1)^{-1}(x))$.
Now one can see that 
\begin{equation}
\Phi=\bigsqcup_{x\in F^1}\Phi_x,
\end{equation}
so the target-surjectivity of~$(f^0,f^1)$ implies the surjectivity of~$\Phi$. Vice versa, since the image of $\Phi$ is
contains  the union of the counterdomains of $\Phi_x$, it is immediate that the surjectivity of~$\Phi$ implies
the target-surjectivity of~$(f^0,f^1)$.
\end{proof}

We can now easily claim the desired composability of the target-bijectivity condition:
\begin{lemma}\label{tbpog}
Restricting morphisms of the category {\rm POG} to the morphisms satisfying
 the  target-bijectivity condition yields a subcategory of~{\rm POG}.
\end{lemma}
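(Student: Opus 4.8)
The plan is to check the two things required of a subcategory: that it contains all identity morphisms of {\rm POG}, and that it is closed under composition. The first is immediate from the observation preceding the lemma — every bijective graph homomorphism, in particular each $(\id_{E^0},\id_{E^1})$, satisfies the target-bijectivity condition \eqref{EU}, and identities are trivially proper. Since {\rm POG} is already closed under composition (a composite of finite-to-one maps is finite-to-one), everything reduces to showing that \eqref{EU} is preserved under composition.

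For this I would take proper homomorphisms $(f^0,f^1)\colon E\to F$ and $(g^0,g^1)\colon F\to G$, each satisfying \eqref{EU}, fix $y\in G^1$, and study the map
\[
(g^1\circ f^1)^{-1}(y)\ni e\longmapsto t_E(e)\in (g^0\circ f^0)^{-1}(t_G(y)).
\]
First one checks that it is well defined: from $(g^1\circ f^1)(e)=y$ we get $f^1(e)\in(g^1)^{-1}(y)$, hence $g^0\big(t_F(f^1(e))\big)=t_G(y)$ by \eqref{EU} for $g$, and $t_F(f^1(e))=f^0(t_E(e))$ by the homomorphism axioms, so $(g^0\circ f^0)(t_E(e))=t_G(y)$.

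The key step is to decompose both the domain and the codomain of this map as disjoint unions indexed by $x\in(g^1)^{-1}(y)$. On the edge side this is pure set theory: $(g^1\circ f^1)^{-1}(y)=\bigsqcup_{x\in(g^1)^{-1}(y)}(f^1)^{-1}(x)$. On the vertex side one invokes the bijectivity in \eqref{EU} for $g$ to write $(g^0)^{-1}(t_G(y))=\bigsqcup_{x\in(g^1)^{-1}(y)}\{t_F(x)\}$, whence $(g^0\circ f^0)^{-1}(t_G(y))=\bigsqcup_{x\in(g^1)^{-1}(y)}(f^0)^{-1}(t_F(x))$. Because $t_F\circ f^1=f^0\circ t_E$, the map above carries the block $(f^1)^{-1}(x)$ into the block $(f^0)^{-1}(t_F(x))$, where it coincides with the map $(f^1)^{-1}(x)\ni e\mapsto t_E(e)\in(f^0)^{-1}(t_F(x))$, which is bijective by \eqref{EU} for $f$. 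A disjoint union of bijections is a bijection, so $(g^0\circ f^0,g^1\circ f^1)$ satisfies \eqref{EU}. Concretely, this is exactly the chain of displayed identities preceding the lemma, assembled and read off together with the ``disjoint union of bijections'' principle.

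The only subtlety is the bookkeeping: one must ensure that the same indexing set $(g^1)^{-1}(y)$ governs the block decomposition on both sides, and that the candidate map really sends the $x$-block of the domain into the $x$-block of the codomain — which is precisely where the compatibility $t_F\circ f^1=f^0\circ t_E$ enters. Beyond that there is no obstacle; properness of the composite is automatic and plays no role in the target-bijectivity part of the argument.
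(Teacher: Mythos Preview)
Your proof is correct and follows essentially the same approach as the paper: both verify identities and then establish closure under composition by decomposing the domain $(g^1\circ f^1)^{-1}(y)$ and the codomain $(g^0\circ f^0)^{-1}(t_G(y))$ into disjoint unions indexed by $x\in(g^1)^{-1}(y)$, reducing to the blockwise bijections supplied by \eqref{EU} for~$f$. One very minor point: the well-definedness of the map $e\mapsto t_E(e)$ into $(g^0\circ f^0)^{-1}(t_G(y))$ follows directly from the graph-homomorphism identities $t_G\circ g^1=g^0\circ t_F$ and $t_F\circ f^1=f^0\circ t_E$, not from \eqref{EU} for $g$ as you wrote.
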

\begin{proof}
Let $(f^0,f^1):E\to F$ and $(g^0,g^1):F\to G$ be morphisms in {\rm POG}. 
We already know that $(g^1\circ f^1,g^0\circ f^0)\in {\rm Mor}(\mathrm{POG})$. Furthermore, we have the following 
commutative diagram:
\begin{equation}
\xymatrix{
E^1 \ar[r]^{f^1}
\ar[d]_{t_E}
& F^1 \ar[d]_{t_F} \ar[r]^{g^1} & G^1 \ar[d]^{t_G}\\
E^0 \ar[r]_{f^0}
& F^0 \ar[r]_{g^0} & \phantom{.}G^0.
}
\end{equation}
Now it follows from standard category theory (e.g., see~\cite[Proposition~11.10]{ahs90}) that, 
if both squares are pullback diagrams, then the outer rectangle is also a pullback diagram, which ends the proof
by Proposition~\ref{targetpull}.
\end{proof}

We denote the subcategory of {\rm POG}  from Lemma~\ref{tbpog} by {\rm TBPOG}. This subcategory enjoys the following useful result.

\begin{lemma}\label{targetpath}
Let $f:E\to F$ be a morphism in TBPOG and $\alpha\in FP(E)$. The map
\[
f^{-1}(f(\alpha))\ni\beta\longmapsto t_{PE}(\beta)\in f^{-1}(f(t_{PE}(\alpha)))
\]
is bijective for all $\alpha\in FP(E)$.
\end{lemma}
\begin{proof} 
Note first that the statement is clearly true when $\alpha$ is a vertex. Assume now that $\alpha:=(e_1,\ldots, e_n)$, where all $e_i$ are edges. 
Then
\begin{equation}
\forall\,v\in (f^0)^{-1}(f^0(t_E(e_n)))~\exists!\,x_n\in (f^1)^{-1}(f^1(e_n)):\quad t_E(x_n)=v
\end{equation}
by the target-bijectivity condition. Much in the same way,
\begin{equation}
\forall\,x_i\in (f^1)^{-1}(e_i)~\exists!\,x_{i-1}\in (f^1)^{-1}(e_{i-1}):\quad t_E(x_{i-1})=s_E(x_i),\qquad i\in\{2,\ldots,n\}.
\end{equation}
Thus, every vertex $v\in (f^0)^{-1}(f^0(t_E(e_i)))$ generates a unique path in $f^{-1}(f(\alpha))$ ending at $v$, i.e. the map
\begin{equation}
f^{-1}(f(\alpha))\ni\beta\longmapsto t_{PE}(\beta)\in f^{-1}(f(t_{PE}(\alpha)))
\end{equation}
is bijective.
\end{proof}

\begin{definition}
A {\em regular homomorphism} of graphs $(f^0,f^1)\colon E\to F$ is a homomorphism of graphs satisfying the  condition
\begin{equation}\label{CR}
f^0\big(E^0\setminus\mathrm{reg}(E)\big)\subseteq F^0\setminus\mathrm{reg}(F).
\end{equation}
\end{definition}
\noindent
Note that \eqref{CR} can be equivalently written as
\begin{equation}\label{214}
(f^0)^{-1}(\mathrm{reg}(F))\subseteq\mathrm{reg}(E).
\end{equation}
Moreover, it is clear that the identity is a regular homomorphism and that a composition of regular homomorphisms is regular. Thus, there 
exists a subcategory of {\rm TBPOG} given by restricting morphisms therein to regular graph homomorphisms.
We denote the category of graphs and regular proper  homomorphisms of graphs satisfying the target-bijectivity condition by 
{\rm CRTBPOG}, 
and call it the  {\em admissible category of graphs}. Morphisms in this category are called \emph{admissible graph homomorphisms}.

\begin{example}
{\rm Let 
$$
E:=\begin{tikzpicture}[auto,swap]
\tikzstyle{vertex}=[circle,fill=black,minimum size=3pt,inner sep=0pt]
\tikzstyle{edge}=[draw,->]
   
\node[vertex,label=below:$v$] (0) at (1,0) {};
\node[vertex,label=above:$w_1$] (1) at (0,0.5) {};
\node[vertex,label=below:$w_2$] (2) at (0,-0.5) {}; 

\path (1) edge[edge] node[above] {$e_1$} (0);
\path (2) edge[edge] node[below] {$e_2$} (0);

\end{tikzpicture}\quad \text{and} \quad
F:=\begin{tikzpicture}[auto,swap]
\tikzstyle{vertex}=[circle,fill=black,minimum size=3pt,inner sep=0pt]
\tikzstyle{edge}=[draw,->]
   
\node[vertex,label=below:$a$] (0) at (0,0) {};
\node[vertex,label=below:$c$] (1) at (1,0) {}; 

\path (0) edge[edge] node[above] {$x$} (1);

\end{tikzpicture}.
$$
Then mapping the vertices $w_1$ and $w_2$ to $a$, the edges $e_1$ and $e_2$ to $x$, and the vertex $v$ to $c$, 
defines a regular proper  graph homomorphism that 
does not satisfy the target-bijectivity condition.}
\end{example}

\begin{example}\label{stallings}
{\rm Let 
$$
E:=\begin{tikzpicture}[auto,swap]
\tikzstyle{vertex}=[circle,fill=black,minimum size=3pt,inner sep=0pt]
\tikzstyle{edge}=[draw,->]
   
\node[vertex,label=below:$v$] (0) at (0,0) {};
\node[vertex,label=above:$w_1$] (1) at (1,0.5) {};
\node[vertex,label=below:$w_2$] (2) at (1,-0.5) {}; 

\path (0) edge[edge] node[above] {$e_1$} (1);
\path (0) edge[edge] node[below] {$e_2$} (2);

\end{tikzpicture}\quad \text{and} \quad
F:=\begin{tikzpicture}[auto,swap]
\tikzstyle{vertex}=[circle,fill=black,minimum size=3pt,inner sep=0pt]
\tikzstyle{edge}=[draw,->]
   
\node[vertex,label=below:$a$] (0) at (0,0) {};
\node[vertex,label=below:$c$] (1) at (1,0) {}; 

\path (0) edge[edge] node[above] {$x$} (1);

\end{tikzpicture}.
$$
Then mapping the vertex $v$ to $a$, the vertices $w_1$ and $w_2$ to $c$, and the edges $e_1$ and $e_2$ to $x$,
 defines an admissible graph homomorphism. This is an elementary example of an unlabeled Stallings folding~\cite{s-jr83}.
}
\end{example}


\section{Pushouts of graphs}
\noindent
We refer the reader to~\cite{ek79} for an extensive study of pushouts of directed graphs.
\subsection{Unions of graphs}
We begin with unions of graphs, which are the simplest examples of pushouts of graphs.
Let $E$ and $F$ be directed graphs. If there is an injective graph homomorphism $(f^0,f^1):E\hookrightarrow F$ given by inclusions,
 then we say that 
$E$ is a {\em subgraph} of $F$, which we write $E\subseteq F$. 
Next, let $F$ and $G$ be graphs.
 Assume that $s_F$ and $t_F$ agree, respectively, with $s_G$ and $t_G$ on $F^1\cap G^1$.
Then we can define the {\em intersection} graph 
\begin{equation}
F\cap G:=(F^0\cap G^0,F^1\cap G^1,s_\cap,t_\cap),
\end{equation}
where $s_\cap,t_\cap:F^1\cap G^1\to F^0\cap G^0$ are given by
\begin{equation}
\forall\; e\in F^1\cap G^1:\quad s_\cap(e):=s_G(e)=s_F(e),\quad t_\cap(e):=t_G(e)=t_F(e).
\end{equation}
Next, we can define the {\em union} graph 
\begin{equation}
F\cup G:=(F^0\cup G^0,F^1\cup G^1,s_\cup,t_\cup),
\end{equation}
where $s_\cup,t_\cup:F^1\cup G^1\to F^0\cup G^0$ are given by
\begin{equation*}
\forall\; e\in F^1\cup G^1:\quad 
s_\cup(e):=
\begin{cases}
s_F(e)&\text{for}\;e\in F^1,\\
s_G(e)&\text{for}\;e\in G^1,
\end{cases}
\quad\text{and}\quad
t_\cup(e):=
\begin{cases}
t_F(e)&\text{for}\;e\in F^1,\\
t_G(e)&\text{for}\;e\in G^1.
\end{cases}
\end{equation*}
The intersection graph $F\cap G$ is a subgraph of both $F$ and $G$, and both $F$ and $G$ are subgraphs of the union graph 
$F\cup G$. The intersection graph $F\cap G$ exists if and only if the union graph $F\cup G$ exists.

Now, we recall the concept of hereditary and saturated subsets of the set of vertices in a graph. Let $E$ be a graph. A subset 
$H\subseteq E^0$ is called {\em hereditary}
if any edge starting at $v\in H$ ends at $w\in H$, and it is called {\em saturated} if there does not exist a 
regular vertex $v\in E^0\setminus H$ such that 
$t_E(s_E^{-1}(v))\subseteq H$. 
Note that in the above definition of a hereditary subset one can replace the word 
``edge'' by the phrase ``finite path''. Observe also that the formulas $s_F(e):=s_E(e)$, $t_F(e):=t_E(e)$, 
$e\in F^1:=E^1\setminus t_E^{-1}(H)$,
define a~subgraph $F$ of $E$ with $F^0:=E^0\setminus H$ if and only if $H$ is hereditary.

Furthermore, we say that $v\in E^0$ is a {\em breaking vertex} for $H$ iff
\begin{equation}
v\in E^0\setminus H,\qquad |s^{-1}_E(v)|=\infty,\qquad\text{and}\qquad 0<|s^{-1}_E(v)\cap t^{-1}_E(E^0\setminus H)|<\infty.
\end{equation}
We denote the set of all breaking vertices for $H$ by
\begin{equation}
B_H:=\{v\in E^0\setminus H~|~v\text{ is a breaking vertex for }H\}.
\end{equation}
A subset $H$ of $E^0$ is called \emph{unbroken} if and only if $B_H=\emptyset$. Note that a breaking vertex of $H$ becomes 
regular in the subgraph obtained by removing all vertices
in $H$ and all edges ending in~$H$.

We are ready now to bundle up the three properties of being hereditary, saturated and unbroken
to restrict subgraphs to these that played a crucial role in~\cite{hrt20}.
\begin{definition}\label{admissiblesubgraph}
An injective graph homomorphism $(f^0,f^1):E\hookrightarrow F$ is called $\cup$-{\em admissible} iff
 it satisfies the following conditions:
\begin{enumerate}
\item[{\rm (A1)}] $F^0\setminus f^0(E^0)$ is saturated,
\item[{\rm (A2)}] $t^{-1}_F(f^0(E^0))\subseteq f^1(E^1)$.
\end{enumerate}
We call a $\cup$-admissible injective graph homomorphism
 \emph{strongly $\cup$-admissible} iff, in addition, the subset $F^0\setminus f^0(E^0)$ is unbroken.
In the case the maps defining a (strongly) $\cup$-admissible injective graph homomorphism $(f^0,f^1)$ are inclusions, 
we call $E$ a {\em (strongly) admissible subgraph} of~$F$.
Furthermore, we call intersecting  graphs $F$ and $G$ (strongly) admissible if both inclusions $F\cap G\subseteq F$ 
and $F\cap G\subseteq G$ are (strongly) $\cup$-admissible. Much in the same way, we call taking the union of graphs $F$ and $G$ 
(strongly) admissible if both inclusions $F\subseteq F\cup G$ 
and $G\subseteq F\cup G$ are (strongly) $\cup$-admissible. 
\end{definition}
\noindent
The above definition already appeared in~\cite[Definition~3.1]{cht21} (see also~\cite[Definition~2.1]{hrt20}), where it is also 
assumed that $f^1(E^1)\subseteq t^{-1}_F(f^0(E^0))$ and $F^0\setminus f^0(E^0)$ is hereditary. However, the 
first condition is always true for any graph homomorphism and the hereditarity follows from 
the condition~(A2):
\begin{proposition}\label{hermorph}
Let $(f^0,f^1):E\to F$ be any graph homomorphism satisfying~(A2). Then $F^0\setminus f^0(E^0)$ is hereditary.
\end{proposition}
\begin{proof}
Suppose that $F^0\setminus f^0(E^0)$ is not hereditary, i.e.\ there is $x\in F^1$ such that $s_F(x)\in F^0\setminus f^0(E^0)$ and 
$t_F(x)\in f^0(E^0)$. Since $t_F^{-1}(f^0(E^0))\subseteq f^1(E^1)$, there is an edge $e\in E^1$ such that $f^1(e)=x$. 
Then $s_F(x)=s_F(f^1(e))=f^0(s_E(e))$, which gives a contradiction.
\end{proof}

Next, we turn to unbroken subsets. Our next result shows that the assumption of strong admissibility of taking the union 
in~\cite[Theorem~3.1]{hrt20} is superfluous.
\begin{lemma}\label{captocup}
Let $F$ and $G$ be arbitrary graphs whose source and target maps agree, respectively, on $F^1\cap G^1$. Then, if intersecting  $F$ 
and $G$ is strongly admissible, so is taking the union of $F$ and~$G$.
\end{lemma}
\begin{proof}
Assume that  intersecting $F$ and $G$ is strongly admissible. Then, to prove that also taking the union 
of $F$ and $G$
is strongly admissible, it suffices to show that both $(F^0\cup G^0)\setminus F^0$ and
\mbox{$(F^0\cup G^0)\setminus G^0$}
are unbroken in $F\cup G$. To this end, suppose that 
$v\in F^0$ is a breaking vertex for $(F^0\cup G^0)\setminus F^0$  in $F\cup G$. Then
$v$ emits infinitely many, whence not zero, edges ending in $G^0\setminus F^0$, so $v\in F^0\cap G^0$. Also,
all these edges are from~$G^1$.
Furthermore, $v$ emits at least one and at most finitely many edges ending in~$F^0$. If all of them 
end in $F^0\setminus G^0$, then they are all from~$F^1$, and they render $F^0\setminus G^0$
not saturated in~$F$, which is not allowed by the $\cup$-admissibility of $(F\cap G)\subseteq F$. Therefore,
$v$ emits at least one edge $e$ ending in $F^0\cap G^0$. 
Now, from the $\cup$-admissibility of $(F\cap G)\subseteq F$ and 
$(F\cap G)\subseteq G$,
we obtain
\begin{equation}
e\in t_\cup^{-1}(F^0\cap G^0)=t_F^{-1}(F^0\cap G^0)\cup t_G^{-1}(F^0\cap G^0)=F^1\cap G^1,
\end{equation}
 so $e\in G^1$. Also, since $v$ emits only finitely
many edges into $F^0$, in particular it emits only finitely many edges from $G^1$ ending in $F^0\cap G^0$.
All this makes $v$ a breaking vertex for $G^0\setminus F^0$ in~$G$, which contradicts
 the strong $\cup$-admissibility of $(F\cap G)\subseteq G$. Hence, $(F^0\cup G^0)\setminus F^0$ 
is unbroken in $F\cup G$. Finally, the symmetric argument shows that  $(F^0\cup G^0)\setminus G^0$ 
is unbroken in~$F\cup G$.
\end{proof}
 
Now, let us show that $\cup$-admissible injective graph homomorphisms are special cases of morphisms in the category CRTBPOG. 
This is why we call CRTBPOG 
the admissible category of graphs.
\begin{proposition}
Let $(f^0,f^1):E\hookrightarrow F$ be an injective graph homomorphism. 
Then $(f^0,f^1)$ is $\cup$-admissible if and only if it is admissible.
\end{proposition}
\begin{proof}
($\Rightarrow$) Since $(f^0,f^1)$ is injective, it is clearly proper. Next, we check the regularity of~$(f^0,f^1)$.
Since infinite emitters in the subgraph remain infinite emitters in the graph, it suffices to consider the images of sinks. 
Assume that $v\in E^0$ is a sink 
and suppose that $f^0(v)$ is 
regular in $F$. First, note that $t_F(s^{-1}_F(f^0(v)))\subseteq F^0\setminus f^0(E^0)$.
Indeed, take an edge $y\in F^1$ such that $s_F(y)=f^0(v)$ and suppose that $t_F(y)\in f^0(E^0)$. Then, by (A2), there is an edge 
$e\in E^1$ such that $f^1(e)=y$. However, since $f^0(s_E(e))=s_F(f^1(e))=f^0(v)$, by the injectivity of~$f^0$, we obtain that 
$s_E(e)=v$, which contradicts the assumption that $v$ is a sink. Consequently, what we have just proved 
contradicts the fact that $F^0\setminus f^0(E^0)$ is saturated (the condition (A1)). Finally, we 
have to show that the target-bijectivity condition is satisfied.
Due to the injectivity of $(f^0,f^1)$, we know that, for any $x\in F^1$, the sets $(f^1)^{-1}(x)$ and $(f^0)^{-1}(t_F(x))$ are either 
empty or consist of a single element. To prove the claim, it suffices to exclude the possibility in which one of these sets is empty and 
the other one is not. First, if $(f^1)^{-1}(x)=\{e\}$, then $t_E(e)\in(f^0)^{-1}(t_F(x))$. Next, since 
$t_F^{-1}(f^0(E^0))\subseteq f^1(E^1)$, if $(f^0)^{-1}(t_F(x))=\{v\}$, then $(f^1)^{-1}(x)\neq\emptyset$.

($\Leftarrow$) Assume that $(f^0,f^1):E\hookrightarrow F$ is a morphism in CRTBPOG. First, let us prove that 
$t_F^{-1}(f^0(E^0))\subseteq f^1(E^1)$. Let $x\in F^1$ be such that $t_F(x)\in f^0(E^0)$. This implies that there is $v\in E^0$ 
such that $f^0(v)=t_F(x)$. In turn, from the target-bijectivity condition, we infer that there exists $e\in (f^1)^{-1}(x)$ such that 
$t_E(e)=v$. Now it suffices to prove that \mbox{$F^0\setminus f^0(E^0)$} is saturated. 
To this end, suppose that there is a~regular vertex 
$w\in f^0(E^0)$ such that $t_F(s^{-1}_F(w))\subseteq F^0\setminus f^0(E^0)$. It follows that 
$s^{-1}_F(w)\cap f^1(E^1)=\emptyset$. Indeed, suppose that there is an edge $y\in F^1$ such that $s_F(y)=w$ and $f^1(e)=y$
 for some $e\in E^1$. Then $t_F(y)=t_F(f^1(e))=f^0(t_E(e))$, which contradicts the assumption. Therefore, if $v\in E^0$ is 
a vertex such that $f^0(v)=w$, then  $s^{-1}_E(v)=\emptyset$, which contradicts the regularity of $(f^0,f^1)$.
\end{proof}

\begin{remark} \label{remark3.5}
{\rm Note that, if $f:=(f^0,f^1):E\to F$ is an admissible graph homomorphism, then $f(E)$ is an admissible subgraph of $F$. 
Indeed, suppose that $f^0(v)\in {\rm reg}(F)$ and 
\begin{equation}
t_F(s_F^{-1}(f^0(v)))\subseteq F^0\setminus f^0(E^0).
\end{equation}
Then $v\in {\rm reg}(E)$, so there exists 
$e\in s_E^{-1}(v)$. Furthermore, $f(e)\in s_F^{-1}(f^0(v))$ but 
\begin{equation}
t_F(f^1(e))=f^0(t_E(e))\in f^0(E^0),
\end{equation}
which gives a contradiction, and proves 
Definition~3.1(A1). To prove Definition~3.1(A2), take any $x\in t_F^{-1}(f^0(E^0))$. (If $t_F^{-1}(f^0(E^0))=\emptyset$, we are done.) 
Then $(f^0)^{-1}(t_F(x))\neq\emptyset$ so by the target-surjectivity of $f$, there exists $e\in (f^1)^{-1}(x)$. Hence, $x\in f^1(E^1)$, as needed. 
However, restricting the codomain of $f$ to its image might yield a  graph homomorphism $E\to f(E)$ that is not  admissible:
\begin{equation}
\begin{tikzpicture}[auto,swap]
\tikzstyle{vertex}=[circle,fill=black,minimum size=3pt,inner sep=0pt]
\tikzstyle{edge}=[draw,->]
\tikzstyle{cycle1}=[draw,->,out=130, in=50, loop, distance=30pt]
   
\node[vertex] (0) at (0,0) {};
\node[vertex] (1) at (0,-1) {};
\node[vertex] (2) at (1,-1) {};

\path (1) edge[draw,->] node[above] {$e$} (2);
\end{tikzpicture}\quad 
\begin{tikzpicture} 
\node (0) at (0,1) {};
\node (1) at (1,1) {};
\path (0) edge[draw,->] node[above] {} (1);
\end{tikzpicture}\quad
\begin{tikzpicture}[auto,swap]
\tikzstyle{vertex}=[circle,fill=black,minimum size=3pt,inner sep=0pt]
\tikzstyle{edge}=[draw,->]
\tikzstyle{cycle1}=[draw,->,out=130, in=50, loop, distance=30pt]

\node (-1) at (0,0) {};   
\node[vertex] (0) at (0,-1) {};
\node[vertex] (1) at (1,-1) {};

\path (0) edge[edge] node[above] {$e$} (1);

\end{tikzpicture}\quad
\begin{tikzpicture} 
\node (0) at (0,1) {};
\node (1) at (1,1) {};
\path (0) edge[draw,->] node[below] {${}$} (1);
\end{tikzpicture}\quad
\begin{tikzpicture}[auto,swap]
\tikzstyle{vertex}=[circle,fill=black,minimum size=3pt,inner sep=0pt]
\tikzstyle{edge}=[draw,->]
\tikzstyle{cycle1}=[draw,->,out=130, in=50, loop, distance=30pt]
   
\node[vertex] (0) at (0,0) {};
\node[vertex] (1) at (1,0) {};
\node[vertex] (2) at (0,1) {};

\path (0) edge[edge] node[above] {$e$} (1);

\path (0) edge[edge] node[left] {${\tiny \infty}$} (2);
\end{tikzpicture}\quad.
\end{equation} 
\[
\quad E\qquad \longrightarrow\qquad f(E)\qquad\longrightarrow\qquad F\qquad 
\]}
\end{remark}


\subsection{Pushouts of graphs in different categories}
In the category of sets and maps, the pushout of $X\stackrel{f}{\leftarrow} Z\stackrel{g}{\to} Y$ is
\begin{equation}
X\stackrel{\iota_X}{\longrightarrow}X\underset{Z}{\amalg} Y\stackrel{\iota_Y}{\longleftarrow}Y,
\quad X\underset{Z}{\amalg} Y:=(X\amalg Y)/R_Z\,,
\end{equation}
where $R_Z$ is the minimal equivalence relation generated by $f(z)R_Zg(z)$, $z\in Z$, and
$\iota_X$ and $\iota_Y$ are the obvious induced maps. We call a pushout diagram {\em one-injective} whenever at least one of
 the defining maps is injective.

The above pushout construction does not always yield a pushout in the category of sets and finite-to-one maps. Therefore, we 
we need the following elementary result:
\begin{lemma}\label{oneinjective}
Let 
\begin{equation*}
\xymatrix{
&
P
&\\
X
\ar[ur]^{\iota_X}& & 
Y
\ar[ul]_{\iota_Y}\\
&
Z
\ar[ur]_{g}\ar[ul]^{f}&
}
\end{equation*}
be a pushout diagram in the category of sets and maps. If one of the maps $f$ and $g$ is injective and
the other one is finite to one, then the above diagram is a pushout diagram in the category of sets and 
finite-to-one maps.
\end{lemma}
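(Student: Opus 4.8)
The plan is to verify the universal property of the pushout directly inside the category of sets and finite-to-one maps, using the already-constructed pushout $P$ in the category of sets and maps. Since $P$ together with $\iota_X,\iota_Y$ is a pushout of $X\xleftarrow{f}Z\xrightarrow{g}Y$ among all maps, and since every finite-to-one map is in particular a map, the only two things to check are: (a) the coprojections $\iota_X$ and $\iota_Y$ are themselves finite to one, and (b) for any set $W$ and any finite-to-one maps $u\colon X\to W$, $v\colon Y\to W$ with $u\circ f=v\circ g$, the unique mediating map $w\colon P\to W$ (which exists by the set-level universal property) is finite to one.

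First I would prove (a). Without loss of generality assume $f$ is injective and $g$ is finite to one. Recall $P=(X\amalg Y)/R_Z$ where $R_Z$ is generated by $f(z)\sim g(z)$. The key structural observation is that, because $f$ is injective, each equivalence class of $R_Z$ meets $X$ in at most one point: a nontrivial identification chain has the form $f(z_1)\sim g(z_1)$, $g(z_1)=g(z_2)$ forces $f(z_1),f(z_2)$ to already be related only through the $Y$-side, and injectivity of $f$ then pins down the unique $X$-representative once any one element of the class is fixed. Consequently $\iota_X$ is injective (hence finite to one). For $\iota_Y$, an element $[x]\in P$ with $x\in Y$ can have several $Y$-preimages under $\iota_Y$, namely the $Y$-points in its class; these are obtained from each other by chains alternating $f^{-1}$ then $g$, and since $f$ is injective the $f^{-1}$ step is single-valued while the $g$ step is finite to one, so the class contains finitely many $Y$-points. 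A short induction on the length of the identification chain makes this precise. Hence $\iota_Y$ is finite to one as well. (The symmetric case, $g$ injective and $f$ finite to one, is identical with the roles of $X$ and $Y$ swapped.)

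Next I would prove (b). Let $p\in W$; we must bound $|w^{-1}(p)|$. Every point of $P$ is $\iota_X(x)$ for some $x\in X$ or $\iota_Y(y)$ for some $y\in Y$, and by construction $w(\iota_X(x))=u(x)$, $w(\iota_Y(y))=v(y)$. Thus $w^{-1}(p)\subseteq \iota_X(u^{-1}(p))\cup \iota_Y(v^{-1}(p))$, which is a union of two finite sets because $u$ and $v$ are finite to one; hence $w^{-1}(p)$ is finite. Therefore $w$ is finite to one, and it is the unique finite-to-one map with $w\circ\iota_X=u$, $w\circ\iota_Y=v$ because it is already unique among all maps with that property. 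This establishes the universal property and completes the proof.

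I expect step (a) — showing $\iota_Y$ is finite to one — to be the only real obstacle, since it is where the asymmetric hypothesis ($f$ injective, $g$ merely finite to one) has to be used carefully: one must control the length and branching of the alternating identification chains defining $R_Z$. The cleanest way to organize it is to describe $R_Z$ explicitly: $a\sim b$ in $X\amalg Y$ iff there is a finite sequence $z_1,\dots,z_k\in Z$ with consecutive images matching up under $f$ and $g$ in the standard zig-zag pattern; then injectivity of $f$ collapses the $X$-side of every zig-zag, and finiteness of the fibers of $g$ bounds the $Y$-side. Steps (b) and the uniqueness clause are then essentially formal, reducing everything to the already-established set-level pushout together with Lemma~\ref{lemma73}-style bookkeeping that a finite union of finite sets is finite.
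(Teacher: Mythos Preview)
Your overall strategy is exactly the paper's: first show that both coprojections $\iota_X$ and $\iota_Y$ are finite to one, then show that the mediating map $w$ is finite to one via $w^{-1}(p)\subseteq \iota_X(u^{-1}(p))\cup \iota_Y(v^{-1}(p))$. Part (b) matches the paper's argument essentially verbatim.

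However, in part (a) you have swapped the roles of $\iota_X$ and $\iota_Y$. With $f$ injective and $g$ finite to one it is $\iota_Y$ that is injective and $\iota_X$ that is merely finite to one, not the other way around. A concrete counterexample to your claim that ``each equivalence class of $R_Z$ meets $X$ in at most one point'': take $Z=\{z_1,z_2\}$, $X=\{x_1,x_2\}$, $Y=\{y\}$ with $f(z_i)=x_i$ (injective) and $g(z_1)=g(z_2)=y$; then $x_1\sim y\sim x_2$, so $\iota_X$ is not injective. Your justification (``injectivity of $f$ then pins down the unique $X$-representative'') does not go through, because the branching happens on the $g$-side. What injectivity of $f$ actually buys is that a zig-zag leaving a point $y\in Y$ and returning to $Y$ must come back to $y$ itself: from $y$ one passes to $f(z)$ with $g(z)=y$, and the only way back is via some $z'$ with $f(z')=f(z)$, forcing $z'=z$ and hence $g(z')=y$. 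That is the paper's argument that $\iota_Y$ is injective. The paper then shows $\iota_X$ is finite to one by observing that if $\iota_X(x_1)=\iota_X(x_2)$ with $x_1\neq x_2$, then all intermediate $z_i$'s in the zig-zag lie in the single finite fibre $g^{-1}(y)$ (with $y$ determined by $x_1$), so $x_2\in f(g^{-1}(y))$.

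So the fix is simply to interchange $X$ and $Y$ in your step (a); after that your proof coincides with the paper's.
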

\begin{proof}
Assume without the loss of generality that $f$ is injective and $g$ is finite to one. Then the canonical maps 
of their pushout 
\begin{equation}
X\stackrel{\iota_X}{\longrightarrow}X\underset{Z}{\amalg} Y\stackrel{\iota_Y}{\longleftarrow}Y
\end{equation}
are also finite to one. Indeed, if $\iota_Y(y)=\iota_Y(y')$, then $y=y'$ or there exists a finite
sequence $(z_1,\ldots, z_{2n})\in Z^{2n}$ such that
\begin{align}
y=g(z_1)&\text{ and }f(z_1)=x_1=f(z_2),
\nonumber\\
g(z_2)=y_1=g(z_3)&\text{ and }f(z_3)=x_2=f(z_4),
\nonumber\\
&\;\;\;\vdots
\nonumber\\ 
g(z_{2n-2})=y_{n-1}=g(z_{2n-1})&\text{ and }f(z_{2n-1})=x_{n}=f(z_{2n}),
\nonumber\\ 
&g(z_{2n})=y'.
\end{align}
In the latter case, from the injectivity of $f$ we  conclude that $z_{2k-1}=z_{2k}$ for all $k\in\{1,\ldots,n\}$, so 
\begin{equation}
y=g(z_1)=g(z_{2})=y_1=\dots=y_{n-1}=g(z_{2n-1})=g(z_{2n})=y'.
\end{equation}
 Hence,  $\iota_Y$ is injective.
Next, if $\iota_X(x)=\iota_X(x')$, then $x=x'$ or there exists a finite
sequence $(z_1,\ldots, z_{2m})\in Z^{2m}$ such that
\begin{align}
x=f(z_1)&\text{ and }g(z_1)=y_1=g(z_2),
\nonumber\\
f(z_2)=x_1=f(z_3)&\text{ and }g(z_3)=y_2=g(z_4),
\nonumber\\
&\;\;\;\vdots
\nonumber\\ 
f(z_{2m-2})=x_{m-1}=f(z_{2m-1})&\text{ and }g(z_{2m-1})=y_{m}=g(z_{2m}),
\nonumber\\ 
&f(z_{2m})=x'.
\end{align}
It follows from the injectivity of $f$ that $z_{2k}=z_{2k+1}$ for all $k\in\{1,\ldots,m-1\}$, 
so 
\begin{equation}
g(z_1)=y_1=g(z_2)=g(z_3)=y_2=\ldots=g(z_{2m-2})=g(z_{2m-1})=y_{m}.
\end{equation}
Therefore,
all $y_i$ are equal to $g(z_1)$, where $z_1$ is uniquely determined
by~$x$. Hence, also $y_m$ is uniquely determined by $x$. Consequently, as $x'\in f(g^{-1}(y_m))$, which is a finite set by assumption, there are only finitely many $x'$ such that $\iota_X(x)=\iota_X(x')$, so the map $\iota_X$ is finite to one.

Finally, if $j_X\colon X\to Q$ and $j_Y\colon Y\to Q$ are finite-to-one maps such 
that $j_X\circ f=j_Y\circ g$, then the universal-property map $h\colon P\to Q$  is also finite to one.
Indeed, suppose that the set $h^{-1}(q)$ is infinite for some $q\in Q$. Then, as $P=\iota_X(X)\cup\iota_Y(Y)$,
one of the sets $h^{-1}(q)\cap\iota_X(X)$ and $h^{-1}(q)\cap\iota_Y(Y)$ is infinite, 
which contradicts the assumption
that both $j_X=h\circ\iota_X$ and $j_Y=h\circ\iota_Y$ are finite to one.
\end{proof}

Let $E\stackrel{(f^0,f^1)}{\longleftarrow} G\stackrel{(g^0,g^1)}{\longrightarrow} F$ 
be graph homomorphisms and let $E^i\amalg_{G^i}F^i$ be the coresponding pushout of 
\mbox{$E^i\stackrel{f^i}{\leftarrow} G^i\stackrel{g^i}{\to} F^i$}, $i=0,1$, in the category of sets. Then
we have the following  commutative diagrams:
\begin{equation}
\xymatrix{
&G^1\ar[ld]_{f^1}\ar[rd]^{g^1}\ar@/^0.5pc/[rrrr]^{ s_G } && & & G^0\ar[ld]_{f^0}\ar[rd]^{g^0} &\\
E^1\ar[rd]_{\iota_{E^1}}\ar@/_1pc/[rrrr]^{s_E}
 && F^1\ar[ld]^{\iota_{F^1}}\ar@/^1pc/[rrrr]_{s_F}
 && E^0 \ar[rd]_{\iota_{E^0}}&& F^0\,,\ar[ld]^{\iota_{F^0}}\\
&E^1\underset{G^1}{\amalg}F^1\ar@/_0.5pc/[rrrr]_{s_\amalg}& && & E^0\underset{G^0}{\amalg}F^0&
}
\end{equation}
\begin{equation}
\xymatrix{
&G^1\ar[ld]_{f^1}\ar[rd]^{g^1}\ar@/^0.5pc/[rrrr]^{ t_G } && & & G^0\ar[ld]_{f^0}\ar[rd]^{g^0} &\\
E^1\ar[rd]_{\iota_{E^1}}\ar@/_1pc/[rrrr]^{t_E}
 && F^1\ar[ld]^{\iota_{F^1}}\ar@/^1pc/[rrrr]_{t_F}
 && E^0 \ar[rd]_{\iota_{E^0}}&& F^0\,.\ar[ld]^{\iota_{F^0}}\\
&E^1\underset{G^1}{\amalg}F^1\ar@/_0.5pc/[rrrr]_{t_\amalg}& && & E^0\underset{G^0}{\amalg}F^0&
}
\end{equation}
Here the left and right square subdiagrams commute by the definition of a pushout, and the top
subdiagrams commute by the definition of a graph homomorphism. Moreover,
$s_\amalg$ and $t_\amalg$ exist by the universal property of the pushout 
$E^1\amalg_{G^1}F^1$, which applies
due to the equalities
\begin{align}
&\iota_{E^0}\circ s_E\circ f^1=\iota_{E^0}\circ f^0\circ s_G=\iota_{F^0}\circ g^0\circ s_G
=\iota_{F^0}\circ s_F\circ g^1\,,\nonumber\\
&\iota_{E^0}\circ t_E\circ f^1=\iota_{E^0}\circ f^0\circ t_G=\iota_{F^0}\circ g^0\circ t_G
=\iota_{F^0}\circ t_F\circ g^1\,,
\end{align}
which in turn follow, respectively, from the aforementioned commutativities of the above diagrams. Better still, the universal property of the pushout guarantees 
that both $s_{\coprod}$ and $t_{\coprod}$ are uniquely determined, and they render, respectively, the bottom subdiagrams of both diagrams commutative.

\begin{definition}
We call the graph 
\begin{equation*}
E\underset{G}{\amalg}F:=
\left(E^0\underset{G^0}{\amalg}F^0,E^1\underset{G^1}{\amalg}F^1,s_\amalg,t_\amalg\right)
\end{equation*} 
the \emph{pushout graph} of
$E\stackrel{(f^0,f^1)}{\longleftarrow} G\stackrel{(g^0,g^1)}{\longrightarrow} F$.
\end{definition}
\noindent 
It is straightforward to show that $E{\amalg}_GF$ is indeed a pushout in the category {\rm OG} of graphs and graph homomorphisms.

A pushout in the category {\rm OG} might not be a pushout in the admissible category of graphs. 
Therefore, we need the following result:
\begin{lemma}\label{admpush}
Let the diagram
\begin{equation*}
\xymatrix{
&
P
&\\
E
\ar[ur]^{(\iota^0_E,\iota^1_E)}& & 
F
\ar[ul]_{(\iota^0_F,\iota^1_F)}\\
&
G
\ar[ur]_{(g^0,g^1)}\ar[ul]^{(f^0,f^1)}&
}
\end{equation*}
be a one-injective pushout diagram in the category {\rm OG} of graphs and graph homomorphisms. 
If $(f^0,f^1)$ and $(g^0,g^1)$ are proper, regular, and
satisfy the target-bijectivity condition, then the same is true, respectively, for $(\iota_E^0,\iota_E^1)$ and $(\iota_F^0,\iota_F^1)$.
\end{lemma}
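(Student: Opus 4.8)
The plan is to verify each of the three conditions---target bijectivity, regularity, and properness (finite-to-one)---separately for the pushout inclusions $(\iota_E^0,\iota_E^1)$ and $(\iota_F^0,\iota_F^1)$, working only with $\iota_E$ since the argument for $\iota_F$ is symmetric. Throughout, I will use the explicit set-theoretic description of the pushout graph: $P^0=E^0\amalg_{G^0}F^0$, $P^1=E^1\amalg_{G^1}F^1$, with source and target maps $s_\amalg,t_\amalg$ induced by the universal property, together with the fact (Lemma~\ref{oneinjective}) that in the one-injective case the canonical maps $\iota_E^0,\iota_E^1,\iota_F^0,\iota_F^1$ of the pushout are finite-to-one, and indeed that the ``injective side'' produces injective canonical maps on the opposite factor.

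First I would establish \emph{target bijectivity} for $\iota_E^1$. Fix a class $[x]\in P^1$ and, choosing a representative, assume $x\in E^1$ (the case $x\in F^1$ is symmetric, or handled by relabelling). I need the map $(\iota_E^1)^{-1}([x])\ni e\mapsto t_E(e)\in(\iota_E^0)^{-1}(t_\amalg[x])$ to be a bijection. The key point is to identify the fibre $(\iota_E^1)^{-1}([x])$: an edge $e\in E^1$ lies in it iff $[e]=[x]$ in $P^1$, which by the description of the pushout equivalence relation means $e$ and $x$ are joined by a zig-zag through $G^1$ via $f^1$ and $g^1$. Using that $(f^0,f^1)$ and $(g^0,g^1)$ satisfy target bijectivity, I can propagate the target-fibre structure along each step of such a zig-zag: target bijectivity of $f$ and $g$ says each of their edge-maps restricts to a bijection between target-fibres of edges, so the composite zig-zag relation on edges is compatible with the zig-zag relation on vertices under $t$. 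This yields a well-defined surjection from $(\iota_E^1)^{-1}([x])$ onto $(\iota_E^0)^{-1}(t_\amalg[x])$; injectivity follows because two edges in $E^1$ with the same image $[x]$ and the same target in $E^0$ must already coincide---this reduces, via the zig-zag, to the injectivity statements packaged in the target bijectivity of $f$ and $g$ together with the one-injectivity hypothesis. (This is essentially the same bookkeeping already performed in the proof of Lemma~\ref{answer}, specialized to single edges rather than whole paths.)

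Next I would check \emph{regularity}, i.e.\ $\iota_E^0\big(E^0\setminus\mathrm{reg}(E)\big)\subseteq P^0\setminus\mathrm{reg}(P)$. Take $v\in E^0$ that is not regular in $E$, so $v$ is either a sink or an infinite emitter in $E$; I must show $[v]=\iota_E^0(v)$ is not regular in $P$. The crucial observation is that the edges emitted by $[v]$ in $P$ are exactly the classes $[e]$ with $s_\amalg[e]=[v]$, and a careful analysis of the pushout---using one-injectivity and the target/source bijectivity to control which representatives are possible---shows that $s_\amalg^{-1}([v])$ is assembled from the $E$-edges emitted by $v$, the $F$-edges emitted by vertices $w\in F^0$ with $\iota_F^0(w)=[v]$, glued along $G$-edges. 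If $v$ is a sink in $E$ the only way $[v]$ could emit an edge in $P$ is through such an $F$-vertex $w$ identified with $v$; but an identification $\iota_E^0(v)=\iota_F^0(w)$ forces (again by the zig-zag) a $G$-vertex $u$ with $f^0(u)=v$, $g^0(u)=w$, and since $v$ is a sink and $(f^0,f^1)$ satisfies target bijectivity (hence every edge at $v$ with target... --- here I would instead invoke regularity of $f$ directly: $v\notin\mathrm{reg}(E)$ gives $f^0(v)\notin\mathrm{reg}(F)$ is the wrong direction) --- so instead the cleanest route is: use regularity of $f$ and $g$ to see that any $G$-vertex mapping to $v$ is non-regular in $G$, hence maps to a non-regular vertex of $F$, controlling the $F$-side contribution; and use the one-injectivity plus finite-to-one conclusions of Lemma~\ref{oneinjective} to bound the $E$-side. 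Combining, $s_\amalg^{-1}([v])$ is either empty or infinite, so $[v]\notin\mathrm{reg}(P)$. I expect \textbf{this regularity step to be the main obstacle}, precisely because the pushout can merge an $E$-vertex with an $F$-vertex and thereby merge their emitted-edge sets, so one must argue carefully---leaning on regularity of both $f$ and $g$, and on the one-injective hypothesis---that no such merge can turn a non-regular vertex into a regular one.

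Finally, \emph{properness}: I must show $(\iota_E^0)^{-1}([v])$ and $(\iota_E^1)^{-1}([x])$ are finite for every $[v]\in P^0$, $[x]\in P^1$. But this is exactly the content of Lemma~\ref{oneinjective}: the diagram is a one-injective pushout, so the canonical maps $\iota_E^0,\iota_E^1$ (and $\iota_F^0,\iota_F^1$) are finite-to-one, hence $(\iota_E^0,\iota_E^1)$ is a proper homomorphism. Assembling the three verifications---properness from Lemma~\ref{oneinjective}, target bijectivity from the zig-zag/fibre argument, and regularity from the merged-fibre analysis---gives that $(\iota_E^0,\iota_E^1)$, and symmetrically $(\iota_F^0,\iota_F^1)$, are proper regular homomorphisms satisfying target bijectivity, i.e.\ morphisms in {\rm CRTBPOG}, which is the assertion.
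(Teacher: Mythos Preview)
The paper's own proof of this lemma is a placeholder (it reads literally ``a''), so there is nothing substantive to compare against; your three-step strategy---properness from Lemma~\ref{oneinjective}, target bijectivity by a fibre/zig-zag analysis, regularity by a sink-versus-infinite-emitter case split---is the natural one and is correct in outline.

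There is, however, a genuine gap in your regularity step. Assume without loss of generality that $(f^0,f^1)$ is injective, so that $\iota_F$ is injective and $(\iota_E^0)^{-1}([v])=f^0\big((g^0)^{-1}(g^0(u))\big)$ whenever $v=f^0(u)$. You correctly note that if $v$ is a sink then $u$ is a sink in $G$, and hence $w:=g^0(u)$ is non-regular in $F$ by regularity of~$g$. But you then write that you will ``bound the $E$-side'' using the finite-to-one conclusion of Lemma~\ref{oneinjective}; this only tells you the fibre $(\iota_E^0)^{-1}([v])$ is finite, and does \emph{not} rule out that some other vertex $v'=f^0(u')$ in that fibre (with $g^0(u')=w$, $u'\neq u$) emits a positive finite number of edges in $E$, which would make $[v]$ regular in~$P$. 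The missing observation is: when $w$ is a sink in $F$, \emph{every} $u'\in(g^0)^{-1}(w)$ is a sink in $G$ (any edge out of $u'$ would map under $g^1$ to an edge out of $w$), so by regularity of $f$ each $f^0(u')$ is non-regular in $E$; hence either one of them is an infinite emitter (and then so is $[v]$, via the finite-to-one $\iota_E^1$) or all are sinks (and then $[v]$ is a sink, since the only $F$-representative of $[v]$ is the sink~$w$). When $w$ is instead an infinite emitter, $[v]$ is immediately an infinite emitter via the injective~$\iota_F^1$. With this extra step in place the regularity of $\iota_E$ (and symmetrically of~$\iota_F$) follows.

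Your target-bijectivity sketch is essentially right but can be made cleaner than a general zig-zag: with $f$ injective, if $x=f^1(c)$ then $(\iota_E^1)^{-1}([x])=f^1\big((g^1)^{-1}(g^1(c))\big)$ and $(\iota_E^0)^{-1}([t_E(x)])=f^0\big((g^0)^{-1}(g^0(t_G(c)))\big)$, so the required bijection is exactly target bijectivity of $g$ at the edge $g^1(c)$; the remaining cases ($x\notin f^1(G^1)$, or $x\in F^1\setminus g^1(G^1)$) reduce to both fibres being singletons or both empty by target bijectivity of $f$ and $g$ respectively.
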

\begin{proof}
Let $(f^0,f^1)$ be injective and regular and let $(g^0,g^1)$ be proper and regular. Then, by Lemma~3.6, both $(\iota_E^0,\iota_E^1)$ and $(\iota_F^0,\iota_F^1)$ are proper, and, by the pushout property, $(\iota_F^0,\iota_F^1)$ is injective. Next, to prove the regularity of $(\iota_F^0,\iota_F^1)$, note that every infinite emitter in $F^0$ stays an infinite emitter in its image under $\iota_F^0$. Now, consider a~sink $v\in F^0$ and suppose that 
$w:=\iota_F^0(v)$ is regular. As $\iota_F^0$ is injective and $v$ is a sink, any edge emitted from $w$ must be in the image of~$\iota_E^1$. Hence, $w$ is in the image of $\iota_E^0$, so $v=g^0(u)$ for some $u\in G^0$. Then $u$ has to be a sink because, if there is an edge $a\in G^1$ 
such that $s_G(a)=u$, then 
\begin{equation}
v=g^0(u)=g^0(s_G(a))=s_E(g^1(a)),
\end{equation}
which is impossible. 
Furthermore, observe that $w=\iota_F^0(g^0(u))=\iota_E^0(f^0(u))$. The vertex $f^0(u)$ cannot be an infinite emitter because 
$\iota^1_E$ is proper and $w$ is regular. So suppose that $f^0(u)$ is a sink. Then, since $g^0(u)=v$ is also a~sink, this would again 
contradict the regularity of $w$, so $f^0(u)$ is regular, which contradicts the regularity of 
$(f^0,f^1)$ because $u$ is a sink.

Next, suppose that $(\iota_E^0,\iota_E^1)$ is not regular, i.e.\ that there is a vertex $v\in E^0\setminus{\rm reg}(E)$ such that 
$w:=\iota_E(v)\in{\rm reg}(P)$. Since $\iota_E$ is proper, the vertex $v$ cannot be an infinite emitter. Suppose that $v$ is a sink. If 
$v\notin f^0(G^0)$, then, $w\notin\iota_F^0(F^0)$, so we get a contradiction because $\iota_E^0$ is the identity when restricted to $E^0\setminus f^0(G^0)$. If 
there is a vertex $u\in G^0$ such that $f^0(u)=v$, then, arguing as before, $u$ is  a sink. Hence, as 
$w=\iota^0_E(f^0(u))=\iota^0_F(g^0(u))$, we get a contradiction with regularity of 
$(\iota_F^0\circ g^0,\iota_F^1\circ g^1)$.

Let us now prove that $(\iota_F^0,\iota_F^1)$ and $(\iota_E^0,\iota_E^1)$ satisfy the target-bijectivity condition. Take $x\in P^1$. 
Since $(\iota_F^0,\iota^1_F)$ is injective it suffices to exclude the two possibilities:
\begin{enumerate}
\item[(T1)] $(\iota_F^1)^{-1}(x)=\emptyset$ and $(\iota_F^0)^{-1}(t_P(x))=\{v\}$ for some $v\in F^0$,
\item[(T2)] $(\iota_F^1)^{-1}(x)=\{e\}$ for some $e\in F^1$ and $(\iota_F^0)^{-1}(t_P(x))=\emptyset$.
\end{enumerate}
Suppose that the condition (T1) is satisfied, so $\iota_F^0(v)=t_P(x)$. Since $P^1=\iota_E^1(E^1)\cup\iota_F^1(F^1)$, 
we infer that $x\in\iota_E^1(E^1)\setminus\iota_F^1(F^1)$. 
Hence, there is an edge $y\in E^1\setminus f^1(G^1)$ such that $\iota^1_E(y)=x$. Note that 
$\iota_F^0(v)=t_P(x)=t_P(\iota_E^1(y))=\iota_E^0(t_E(y))$.
Therefore, there is a vertex $u\in G^0$ such that $f^0(u)=t_E(y)$. Due to the target-surjectivity of $(f^0,f^1)$, we 
get an edge $a\in (f^1)^{-1}(y)$ such that $t_G(a)=u$. However, $\iota^1_F(g^1(a))=\iota^1_E(f^1(a))=\iota^1_E(y)=x$,
which contradicts $(\iota_F^1)^{-1}(x)=\emptyset$. Next, suppose that the condition (T2) is satisfied, so
$\iota_F^1(e)=x$ and $\iota_F^0(t_F(e))=t_P(\iota_F^1(e))=t_P(x)$, which contradicts 
$(\iota_F^0)^{-1}(t_P(x))=\emptyset$.

Finally, we prove that $(\iota_E^0,\iota_E^1)$ satisfies the target-bijectivity condition. 
Take any $x\in P^1$ and consider the following three cases:

\emph{Case 1:}
If $x\in\iota_F^1(F^1)\setminus \iota_E^1(E^1)$, then $(\iota^1_E)^{-1}(x)=\emptyset$ and there is an edge 
$e\in F^1\setminus g^1(G^1)$ such that $\iota_F^1(e)=x$. Suppose that there is a vertex $v\in E^0$ such that 
$\iota_E^0(v)=t_P(x)$. Then
\begin{equation}
\iota_F^0(t_F(e))=t_P(\iota_F^1(e))=t_P(x)=\iota_E^0(v),
\end{equation}
which implies that there is a vertex $u\in G^0$ such that $g^0(u)=t_F(e)$. Due to the target-surjectivity of 
$(g^0,g^1)$, there is an edge $a\in (g^1)^{-1}(e)$ such that $t_G(a)=u$. In turn, this implies that 
\begin{equation}
\iota_E^1(f^1(a))=\iota_F^1(g^1(a))=\iota_F^1(e)=x,
\end{equation}
which contradicts $(\iota_E^1)^{-1}(x)=\emptyset$.

\emph{Case 2:}
Let $x\in\iota_E^1(E^1)\cap\iota_F^1(F^1)$. Then we have the following commutative diagram:
\begin{equation}
\xymatrix{
(\iota_E^1)^{-1}(x) \ar[r]^{t_E} & (\iota_E^0)^{-1}(t_P(x))\\
(f^1)^{-1}((\iota_E^1)^{-1}(x)) \ar[u]^{f^1}_{\cong} \ar[r]_{t_G}^{\cong} 
& (f^0)^{-1}((\iota_E^0)^{-1}(t_P(x))) \ar[u]^{f^0}_{\cong}.
}
\end{equation}
Indeed, since $(\iota^1_E)^{-1}(x)\subseteq f^1(G^1)$ and $(\iota_E^0)^{-1}(t_P(x))\subseteq f^0(G^0)$, 
the two arrows going upwards are bijections.
To see that the bottom arrow is also a bijection, observe that 
\begin{gather}
(f^1)^{-1}((\iota_E^1)^{-1}(x))=(\iota_E^1\circ f^1)^{-1}(x)=(\iota_F^1\circ g^1)^{-1}(x),
\nonumber\\
(f^0)^{-1}\big((\iota_E^0)^{-1}(t_P(x))\big)=(\iota_E^0\circ f^0)^{-1}(t_P(x))=(\iota_F^0\circ g^0)^{-1}(t_P(x)).
\end{gather}
Now, as both $(\iota_F^0,\iota_F^1)$ and $(g^0,g^1)$ satisfy that target-bijectivity condition, so does their composition,
whence we infer the bijectivity of the bottom arrow. The desired bijectivity of  the top arrow follows now from the commutativity
of the diagram.

\emph{Case 3:} 
If $x\in\iota_E^1(E^1)\setminus\iota_F^1(F^1)$, then $(\iota_E^1)^{-1}(x)=\{y\}$ for some $y\in E^1$ because $\iota_E^1$ is 
injective when restricted to $E^1\setminus f^1(G^1)$. Consequently, $t_E(y)\in (\iota_E^0)^{-1}(t_P(x))$. Suppose that 
$(\iota_F^0)^{-1}(t_P(x))\neq\emptyset$. Then there is a vertex $w\in F^0$ such that $\iota_F^0(w)=t_P(x)$. It follows that 
\begin{equation}
\iota_E^0(t_E(y))=t_P(\iota^1_E(y))=t_P(x)=\iota_F^0(w),
\end{equation}
 which means that there is a vertex $u\in G^0$ such that 
$f^0(u)=t_E(y)$. By the target-surjectivity of $(f^0,f^1)$, there exists an edge $a\in (f^1)^{-1}(y)$. Therefore, 
\begin{equation}
x=\iota_E^1(f^1(a))=\iota_F^1(g^1(a)), 
\end{equation}
which contradicts our assumption.
\end{proof}

\subsection{The covariant \boldmath$FP$ functor}
For the purposes of our study of path algebras, we consider the pushout diagram in the category of sets and maps:
\begin{equation}
\xymatrix{
&
FP(E)\underset{FP(G)}{\amalg}FP(F)
&\\
FP(E)
\ar[ur]^{\iota_{FP(E)}}& & 
FP(F).
\ar[ul]_{\iota_{FP(F)}}\\
&
FP(G)
\ar[ur]_{g}\ar[ul]^{f}&
}
\end{equation}
This leads to the following natural question: 
Under which assumptions does the covariant functor $FP$ from the category $\mathrm{OG}$
of graphs and graph homomorphisms to the category of sets and maps commute with pushouts:
\begin{equation}
FP(E)\underset{FP(G)}{\amalg}FP(F)=FP\left(E\underset{G}{\amalg}F\right)\text{?}
\end{equation}
The answer is as follows:
\begin{lemma}\label{answer}
Let $E\stackrel{(f^0,f^1)}{\longleftarrow} G\stackrel{(g^0,g^1)}{\longrightarrow} F$ 
be graph homomorphisms.
Then there exists a natural map
\begin{equation*}
h\colon FP(E)\underset{FP(G)}{\amalg}FP(F)\longrightarrow FP\left(E\underset{G}{\amalg}F\right).
\end{equation*}
Moreover,  if the graph homomorphisms are such that
\vspace*{-2mm}
\begin{enumerate}
\item
both $f^0$ and $g^0$ are injective (vertex injectivity),
\item
$t_\amalg(x)=s_\amalg(y)\;\Rightarrow\;(x,y\in\iota_{E^1}(E^1)\text{ or }x,y\in\iota_{F^1}(F^1))$
(one color),
\end{enumerate}
then the natural map $h$ is bijective.
\end{lemma}
\begin{proof}
The natural map $FP(E)\amalg FP(F)\rightarrow FP(E\amalg_GF)$ exists because $FP$ is a functor. 
This map descends to $h$ by the universal
property of pushouts in the category of sets and maps. Since graph homomorphisms preserve the length
of paths, we obtain the decomposition
\begin{equation}
FP(E)\underset{FP(G)}{\amalg}FP(F)=\coprod_{n\in\mathbb{N}} FP_n(E)\underset{FP_n(G)}{\amalg}FP_n(F).
\end{equation}
Hence,  we can write $h$ on elements as follows:
\begin{equation}
h([p]):=\begin{cases}
[p] &\text{for } p\in \left(E^0\amalg F^0\right)\cup \left(E^1\amalg F^1\right),\\
([a_1],\ldots,[a_n]) &\text{for } p:=(a_1,\ldots,a_n)\in FP(E)\amalg FP(F),\\
&a_i\in E^1\amalg F^1,\;1\leq i\leq n,\;n\in\mathbb{N}\setminus\{0,1\}.
\end{cases}
\end{equation}

Now, using the two assumptions, we will define the inverse of~$h$. For starters, we put 
\begin{equation}\label{zeroone}
h^{-1}([p]):=[p]\quad
\text{when}\quad p\in \left(E^0\amalg F^0\right)\cup \left(E^1\amalg F^1\right).
\end{equation}
Next, let us take $([a_1],\ldots,[a_n])\in FP(E\amalg_GF)$
 with 
\begin{equation}
[a_i]\in E^1\underset{G^1}{\amalg}F^1,\; 1\leq i\leq n,\;
 n\in\mathbb{N}\setminus\{0,1\}. 
\end{equation}
Since $t_\amalg([a_i])=s_\amalg([a_{i+1}])$ for all $1\leq i\leq n-1$,
from the one-color condition we conclude that there exist $b_i\in E^1\amalg F^1$, $1\leq i\leq n$, such that $[b_i]=[a_i]$ for all $i$, and
\begin{equation}
\forall~1\leq i\leq n:~b_i\in E^1\quad\text{or}\quad\forall~1\leq i\leq n:~b_i\in F^1.
\end{equation}
Hence, $[t_E(b_i)]=[s_E(b_{i+1})]$ or $[t_F(b_i)]=[s_F(b_{i+1})]$. We can now apply the vertex-injectivity condition to infer that
$t_E(b_i)=s_E(b_{i+1})$ or $t_F(b_i)=s_F(b_{i+1})$. 

Furthermore, if $(b'_1,\ldots,b'_n)\in FP(E)$ or $(b'_1,\ldots,b'_n)\in FP(F)$, then 
\begin{equation}\label{implication}
\big{(}\forall~1\leq i\leq n:~[b_i]=[b'_i]\big{)}\quad \Longrightarrow\quad [(b_1,\ldots,b_n)]=[(b'_1,\ldots,b'_n)]
\end{equation}
Indeed, let $c^1_i,\ldots,c^{m}_i\in G^1$ be  sequences such that
\begin{gather}
\begin{matrix}
\forall\;1\leq i\leq n\colon\phantom{xxxxxxxxxx}\\
 f^1(c^1_i)=b_i\text{ and } g^1(c^1_i)=:b^1_i
\end{matrix}
\;,\ldots,\;
\begin{cases}
g^1(c^{m}_i)=:b^{m-1}_i\text{ and } f^1(c^{m}_i)=b'_i&\text{if $m$ is even},\\
f^1(c^{m}_i)=:b^{m-1}_i\text{ and } g^1(c^{m}_i)=b'_i&\text{if $m> 1$ is odd},
\end{cases}\nonumber\\
\text{or}\nonumber\\
\begin{matrix}
\forall\;1\leq i\leq n\colon\phantom{xxxxxxxxxx}\\
 g^1(c^1_i)=b_i\text{ and } f^1(c^1_i)=:b^1_i
\end{matrix}
\;,\ldots,\;
\begin{cases}
g^1(c^{m}_i)=:b^{m-1}_i\text{ and } f^1(c^{m}_i)=b'_i&\text{if $m>1$ is odd},\\
f^1(c^{m}_i)=:b^{m-1}_i\text{ and } g^1(c^{m}_i)=b'_i&\text{if $m$ is even}.
\end{cases}
\end{gather}
Note that, if the primed and unprimed edges are in the same graph, then the parity of all $c$-sequences is always even, and if they are in different graphs, then the parity is always odd. Therefore, although for each $i$ the length of the sequence $c^1_i,\ldots,c^{m}_i$ might be
different, we can always choose the longest such a sequence and extend shorter sequences by the constant extrapolation by an even number of elements. Next, we need to prove that $t_G(c^j_i)=s_G(c^j_{i+1})$. 
Depending on the parity of $j$ and the
above alternative between $E$ and $F$, we either have 
\begin{gather}
[f^1(c^j_i)]=[b_i]\quad\text{and}\quad [f^1(c^j_{i+1})]=[b_{i+1}]\nonumber\\
\text{or}\nonumber\\
[g^1(c^j_i)]=[b_i]\quad\text{and}\quad[g^1(c^j_{i+1})]=[b_{i+1}].
\end{gather}
 In the former case, by the vertex injectivity, we obtain
\begin{equation}\label{eq}
t_E(f^1(c^j_i))=t(b_i)\quad\text{and}\quad s_E(f^1(c^j_{i+1}))=s(b_{i+1}),
\end{equation}
where $t$ and $s$ are, respectively, $t_E$ and $s_E$, or $t_F$ and $s_F$, depending on whether
$b_i\in E^1$ or $b_i\in F^1$. It follows from \eqref{eq} that
\begin{equation}
f^0(t_G(c^j_i))=t_E(f^1(c^j_i))=t(b_i)=s(b_{i+1})=s_E(f^1(c^j_{i+1}))=f^0(s_G(c^j_{i+1})),
\end{equation}
so, from the injectivity of $f^0$, we get $t_G(c^j_i)=s_G(c^j_{i+1})$, as needed. In the latter case,
the reasoning is completely analogous but uses the injectivity of $g^0$ instead of the injectivity of~$f^0$.
Thus we have shown that $(c^j_1,\ldots,c^j_n)\in FP(G)$, $1\leq j\leq m$, is a sequence of paths
 implementing the desired equivalence relation between $(b_1,\ldots,b_n)$ and $(b'_1,\ldots,b'_n)$.

Finally, it follows from \eqref{implication} that we can define 
\begin{equation}
h^{-1}(([a_1],\ldots,[a_n])):=[(b_1,\ldots,b_n)]
\end{equation} 
Combining it with \eqref{zeroone} gives us
a map 
\begin{equation}
h^{-1}:FP\left(E\underset{G}{\amalg}F\right)\longrightarrow FP(E)\underset{FP(G)}{\amalg}FP(F),
\end{equation} 
which is, clearly, the inverse of~$h$. 
\end{proof}

\subsection{From pushouts to pullbacks}
Let us consider the contravariant
functors $\mathrm{Map}(\cdot,K)$ and $\mathrm{Map}_f(\cdot,K)$, where $K$ is a~non-empty
 set with a chosen element $0\in K$, ${\rm Map}$ stands for all maps, and ${\rm Map}_f$ denotes finitely supported maps (all but finitely many elements
are mapped to~0). The first functor is
a contravariant functor from the category of sets and maps to the category of sets and maps:
\begin{equation*}
X\longmapsto \mathrm{Map}(X,K),\quad (X\stackrel{f}{\to}Y)\longmapsto
 \left(\mathrm{Map}(Y,K)\stackrel{f^*}{\to}\mathrm{Map}(X,K)\right),\quad f^*(F):=F\circ f.
\end{equation*}
Much in the same way, the second functor is
a contravariant functor from the category of sets and finite-to-one maps to the category of sets and maps:
\begin{equation*}
X\longmapsto \mathrm{Map}_f(X,K),\quad (X\stackrel{f}{\to}Y)\longmapsto
 \left(\mathrm{Map}_f(Y,K)\stackrel{f^*}{\to}\mathrm{Map}_f(X,K)\right),\quad f^*(F):=F\circ f.
\end{equation*}
Here $\mathrm{Map}_f(X,K):=\{F\in\mathrm{Map}(X,K)\;|\;|F^{-1}(K\setminus\{0\})|<\infty \}$,
and
\begin{equation}
|(F\circ f)^{-1}(K\setminus\{0\})|=|f^{-1}(F^{-1}(K\setminus\{0\}))|<\infty
\end{equation}
because a finite union of finite sets is a finite set.

We have the following elementary lemmas whose routine proofs we omit.
\begin{lemma}\label{lemma73}
Let $K$ be any set. Then $\mathrm{Map}(\cdot,K)$ is a contravariant functor from the category of sets
and maps to the category of sets and maps transforming pushouts to pullbacks.
\end{lemma}
\noindent If we restrict to finite-to-one maps, we get a contravariant functor ${\rm Map}_f(\cdot,K)$. 
Furthermore, using Lemma~\ref{oneinjective}, we have the following result.
\begin{lemma}\label{finitepushtopull}
Let $K$ be a non-empty set with a chosen element $0\in K$,  let $f$ be an injective map, and let
\begin{equation*}
\xymatrix{
&
X\underset{Z}{\amalg}Y
&\\
X
\ar[ur]^{\iota_X}& & 
Y
\ar[ul]_{\iota_Y}\\
&
Z
\ar[ur]_{g}\ar[ul]^{f}&
}
\end{equation*}
be a pushout diagram in the category of sets and finite-to-one maps.
Then  the contravariant functor $\mathrm{Map}_f(\cdot,K)$ transforms the above pushout diagram
into the  pullback dia\-gram
\begin{equation*}
\xymatrix{
&
\mathrm{Map}_f\left(X\underset{Z}{\amalg}Y,K\right) 
\ar[dl]_{\iota_Y^*}\ar[dr]^{\iota_X^*}
&\\
\mathrm{Map}_f(X,K)
\ar[dr]_{f^{*}}& & 
\mathrm{Map}_f(Y,K)
\ar[dl]^{g^{*}}\\
&
\mathrm{Map}_f(Z,K).
&
}
\end{equation*}
in the category of sets and maps such that its left defining morphism is surjective.
\end{lemma}


\section{Graph algebras as contravariant functors}

\noindent To alleviate the notation, from now on we will write $e_1\ldots e_n$ for a path $(e_1,\ldots,e_n)$. Furthermore, if $p=e_1\ldots e_n$, $q=f_1\ldots f_m$, and the end of $e_n$ is the beginning of $f_1$, then $pq=e_1\ldots e_n f_1\ldots f_m$.

\subsection{Path algebras}

Let $k$ be a field, $E$ be any non-empty graph, and $FP(E)$ be the set of all its finite paths in $E$. Consider the vector space
\begin{equation}
kE:={\rm Map}_f(FP(E),k),
\end{equation}
where ${\rm Map}_f(FP(E),k)$ is the vector space of all finitely supported functions from $FP(E)$ to $k$ in which the addition and scalar multiplication are 
pointwise. Then the set of functions $\{\chi_p\}_{p\in FP(E)}$ given by
\begin{equation}
\chi_p(q)=\begin{cases}1 &\text{for }p=q,\\0 & \text{otherwise},\end{cases}
\end{equation}
is a linear basis of $kE$. 
By checking the associativity, one can prove that the formulas
\begin{equation}
m:kE\times kE\longto kE,\qquad m(\chi_p,\chi_q):=\begin{cases}\chi_{pq} & \text{if }t(p)=s(q),\\0 & \text{otherwise},\end{cases}
\end{equation}
define a multiplication on~$kE$ rendering it a $k$-algebra. 
\begin{definition}{{\rm (\cite[Definition~II.1.2]{ass06})}}
Let $E$ be a non-empty graph. 
The above constructed algebra $(kE,+,0,m)$ is called the {\em path algebra} of~$E$.
If $E=\emptyset$, then $kE:=0$.
\end{definition}

Let $\mathrm{KA}$ denote the category of algebras over $k$ together with algebra homomorphisms, 
and let $\mathrm{UKA}$ denote the category of unital 
algebras over $k$ together with unital algebra homomorphisms.
\begin{lemma}\label{contrafp}
The assignment
\begin{gather}
\mathrm{Obj}({\rm POG})\ni E\stackrel{\mathrm{Map}_f}{\longmapsto}kE\in\mathrm{Obj}(\mathrm{KA}),\nonumber\\
\mathrm{Mor}({\rm POG})\ni ((f^0,f^1)\colon E\to F)
\stackrel{\mathrm{Map}_f}{\longmapsto}(f^*\colon kF\to kE)\in\mathrm{Mor}(\mathrm{KA}),\nonumber\\
kF\ni \chi_p\stackrel{f^*}{\longmapsto}\sum_{q\in f^{-1}(p)}\chi_q\in kE\,,\label{sum}
\end{gather}
where $f\colon FP(E)\to FP(F)$ is the map induced by $(f^0,f^1)$, defines a contravariant functor. Furthermore, the same assignment 
restricted to the subcategory 
given by graphs with finitely many vertices yields a contravariant functor to the category~$\mathrm{UKA}$.
\end{lemma}
\begin{proof}
Since $f:FP(E)\to FP(F)$ is finite to one, the sum in \eqref{sum} is well defined.
 Furthermore, for any $r\in FP(E)$ and $p\in FP(F)$, we have:
\begin{equation}
f^*(\chi_p)(r):=\left(\sum_{q\in f^{-1}(p)}\chi_q\right) (r)=
\left(\sum_{q\in f^{-1}(p)}\delta_{q,r}\right)=\delta_{p,f(r)}=\chi_p(f(r))=
(\chi_p\circ f)(r).
\end{equation} 
Hence, $f^*$ is the pullback linear map
\begin{equation}
f^*\colon \mathrm{Map}_f(FP(F),k)\ni\alpha\longmapsto\alpha\circ f\in \mathrm{Map}_f(FP(E),k).
\end{equation}
The contravariance  is obvious because
$(f\circ g)^*=g^*\circ f^*$. 

To check that it is an algebra homomorphism, using the fact that $f$ preserves the length of paths,
 we compute:
\begin{align}
f^*(\chi_p\chi_q)&=\delta_{t_F(p),s_F(q)}f^*(\chi_{pq})=\delta_{t_F(p),s_F(q)}\sum_{r\in f^{-1}(pq)}\chi_r
\\
&=\sum_{\substack{r_1\in f^{-1}(p)\\ r_2\in f^{-1}(q)}}\delta_{t_E(r_1),s_E(r_2)}\chi_{r_1r_2}
=\sum_{r_1\in f^{-1}(p)}\chi_{r_1}\sum_{r_2\in f^{-1}(q)}\chi_{r_2}=f^*(\chi_p)f^*(\chi_q). \nonumber
\end{align}
Here, in the third step, we used the implication
\begin{equation}
t_F(p)\neq s_F(q)\quad\Longrightarrow\quad \forall\;r_1\in f^{-1}(p),\, r_2\in f^{-1}(q)\colon t_E(r_1)\neq
s_E(r_2).
\end{equation}
Finally, the unitality of $f^*$ for unital path algebras follows from the fact that $f^{-1}(F^0)=E^0$:
\begin{equation}
f^*(1)=f^*\left(\sum_{v\in F^0}\chi_v\right)=\sum_{w\in f^{-1}(F^0)}\chi_w
=\sum_{w\in E^0}\chi_w=1\,.
\vspace*{-7mm}\end{equation}
\end{proof}

\subsection{Leavitt path algebras}
Let $E=(E^0,E^1,s_E,t_E)$ be a graph. The {\em extended graph} $\bar{E}:=(\bar{E}^0, \bar{E}^1,s_{\bar{E}},t_{\bar{E}})$
of the graph $E$ is given as follows:
\begin{gather}
\bar{E}^0:=E^0,\quad \bar{E}^1:=E^1\sqcup (E^1)^*,\quad (E^1)^*:=\{e^*~|~e\in E^1\},
\nonumber\\
\forall\; e\in E^1:\quad s_{\bar{E}}(e):=s_E(e),\quad t_{\bar{E}}(e):=t_E(e),
\nonumber\\
\forall\; e^*\in (E^1)^*:\quad s_{\bar{E}}(e^*):=t_E(e),\quad t_{\bar{E}}(e^*):=s_E(e).
\end{gather}
Observe that every graph homomorphism $(f^0,f^1)\colon E\to F$ can be extended to a graph homomorphism 
$(\bar{f}^0,\bar{f}^1)\colon\bar{E}\to\bar{F}$ in the following way:
\begin{align}
\bar{f}^0(v)&:=f^0(v),\; v\in E^0=\bar{E}^0,\nonumber\\ 
\bar{f}^1(e)&:=f^1(e),\; e\in E^0,\nonumber\\ 
\bar{f}^1(e^*)&:=f^1(e)^*,\; e^*\in (E^1)^*.
\end{align}
Also, if $e_1\ldots e_n\in FP_n(E)$, we define
\begin{equation}
(e_1\ldots e_n)^*:=e^*_n\ldots e_1^*\in FP(\bar{E}).
\end{equation}

We state the following straightforward result without a proof.
\begin{lemma}\label{extlem}
The assignment
\begin{equation*}
E\longmapsto \bar{E},\qquad (f^0,f^1)\longmapsto (\bar{f}^0,\bar{f}^1),
\end{equation*}
defines a covariant endofunctor of the category {\rm OG}
 of graphs and graph homomorphisms. Furthermore, it restricts to an endofunctor of the 
category {\rm POG} of graphs and proper graph homomorphisms.
\end{lemma}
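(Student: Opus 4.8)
The plan is to verify directly that the assignment $E\mapsto\bar E$, $(f^0,f^1)\mapsto(\bar f^0,\bar f^1)$ satisfies the two axioms of a functor, namely that it sends identities to identities and respects composition, and then to observe that it preserves the finite-to-one property so that it restricts to {\rm POG}. Since the formulas defining $\bar E$ and $(\bar f^0,\bar f^1)$ are completely explicit, the entire proof is a matter of unwinding definitions on each of the three ``types'' of data: vertices, edges of the form $e$, and ghost edges of the form $e^*$.

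First I would check that $(\bar f^0,\bar f^1)$ is actually a graph homomorphism $\bar E\to\bar F$. On vertices and on edges $e\in E^1$ this is immediate, since $\bar f^0=f^0$, $\bar f^1|_{E^1}=f^1$, and $s_{\bar E},t_{\bar E}$ agree with $s_E,t_E$ there, so the intertwining relations $s_{\bar F}\circ\bar f^1=\bar f^0\circ s_{\bar E}$ and $t_{\bar F}\circ\bar f^1=\bar f^0\circ t_{\bar E}$ reduce to the corresponding relations for $(f^0,f^1)$. On a ghost edge $e^*$ one computes $s_{\bar F}(\bar f^1(e^*))=s_{\bar F}(f^1(e)^*)=t_F(f^1(e))=f^0(t_E(e))=\bar f^0(t_{\bar E}(e^*))$, using the target-intertwining relation for $(f^0,f^1)$; the source relation follows symmetrically from the source-intertwining relation. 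Next I would verify functoriality: for the identity homomorphism one has $\overline{\id_E}=(\id_{E^0},\id_{\bar E^1})$ because $\id_{E^1}(e)^*=e^*$, so $\overline{\id_E}=\id_{\bar E}$. For composition, given $(f^0,f^1)\colon E\to F$ and $(g^0,g^1)\colon F\to G$, one checks $\overline{(g^0,g^1)\circ(f^0,f^1)}=(\bar g^0,\bar g^1)\circ(\bar f^0,\bar f^1)$ by evaluating both sides on a vertex (where both give $g^0\circ f^0$), on an edge $e$ (where both give $g^1\circ f^1$), and on a ghost edge $e^*$, where the left side gives $(g^1(f^1(e)))^*$ and the right side gives $\bar g^1(f^1(e)^*)=(g^1(f^1(e)))^*$ as well.

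Finally, for the restriction to {\rm POG}, I would note that $\bar E^0=E^0$ so $(\bar f^0)^{-1}(v)=(f^0)^{-1}(v)$ is finite for every $v\in\bar F^0$, while $\bar E^1=E^1\sqcup(E^1)^*$ decomposes into two pieces preserved by $\bar f^1$, with $(\bar f^1)^{-1}(x)=(f^1)^{-1}(x)$ when $x\in F^1$ and $(\bar f^1)^{-1}(y^*)=\{e^*\mid e\in(f^1)^{-1}(y)\}$ when $y^*\in(F^1)^*$; both are finite whenever $(f^1)$ is finite-to-one. Hence $(\bar f^0,\bar f^1)$ is proper whenever $(f^0,f^1)$ is, and the endofunctor restricts to {\rm POG}. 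There is no real obstacle here: the only point requiring the slightest care is getting the source/target relations for ghost edges right, since the roles of $s$ and $t$ are swapped on $(E^1)^*$, so one must pair the ``source'' relation on $\bar E$ with the ``target'' relation on $E$ and vice versa; everything else is bookkeeping, which is why the paper quite reasonably states the lemma without proof.
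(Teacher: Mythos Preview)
Your proposal is correct and follows exactly the kind of direct verification the paper intends when it states the lemma without proof. One small slip worth fixing: in your ghost-edge computation you end with $\bar f^0(t_{\bar E}(e^*))$, but since you are checking the source relation $s_{\bar F}\circ\bar f^1=\bar f^0\circ s_{\bar E}$, the final term should read $\bar f^0(s_{\bar E}(e^*))=f^0(t_E(e))$; with that correction the chain of equalities closes and everything else is sound.
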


\begin{definition}
The {\em Leavitt path algebra} $L_k(E)$ of a graph $E$ is the quotient of the path algebra $k\bar{E}$ of the extended graph 
$\bar{E}$ by the ideal generated by the union of the following sets:
\begin{enumerate}
\item[{\rm (CK1)}] $\{\chi_{e^*}\chi_f-\delta_{e,f}\chi_{t_E(e)}~|~e,f\in E^1\}$,
\item[{\rm (CK2)}] $\Big{\{}\chi_v-\sum_{e\in s^{-1}_E(v)}\chi_e\chi_{e^*}~\Big{|}~v\in{\rm reg}(E)\Big{\}}$.
\end{enumerate}
\end{definition}
\noindent
The algebra $L_k(E)$ is isomorphic with the universal algebra generated by the elements $\chi_v$, $\chi_e$, $\chi_{e^*}$, $v\in E^0$, 
$e\in E^1$, subject to the relations (CK1) and (CK2) and the standard path-algebraic relations $\chi_v\chi_w=\delta_{v,w}\chi_v$ 
and $\chi_{s_E(e)}\chi_e=\chi_e\chi_{t_E(e)}$. The algebra $L_k(E)$ is $\mathbb{Z}$-graded by the lengths of paths, where edges from 
$(E^1)^*$ have length~$-1$. For $k=\mathbb{C}$, the $\mathbb{Z}$-grading is equivalent to the $U(1)$-action $\gamma$ on 
$L_k(E)$, called the {\em gauge action}, 
given by
\begin{equation}\label{algaction}
\gamma_z([\chi_v]):=[\chi_v],\; \gamma_z([\chi_e]):=z[\chi_e],\; \gamma_z([\chi_{e^*}]):=\bar{z}[\chi_{e^*}],
\; z\in U(1),~v\in E^0,~e\in E^1\,.
\end{equation}

Let $\mathrm{ZKA}$ denote the category of $\mathbb{Z}$-graded algebras over $k$ together with algebra homomorphisms 
preserving the $\mathbb{Z}$-grading, and $\mathrm{ZUKA}$ denote its subcategory of unital $\mathbb{Z}$-graded algebras and 
unital homomorphisms preserving the $\mathbb{Z}$-grading. The next theorem exploits the contravariant functoriality of the 
Leavitt-path-algebra construction when restricted to the admissible category of graphs. 
\begin{theorem}\label{contralthm}
 The assignment
\begin{gather*}
\mathrm{Obj}({\rm CRTBPOG})\ni E\stackrel{}{\longmapsto}L_k(E)\in\mathrm{Obj}(\mathrm{ZKA}),\nonumber\\
\mathrm{Mor}({\rm CRTBPOG})\ni ((f^0,f^1)\colon E\to F)
\stackrel{}{\longmapsto}\big({f}^*_L\colon L_k(F)\to L_k(E)\big)\in\mathrm{Mor}(\mathrm{ZKA}),
\nonumber\\
L_k(F)\ni [\chi_p]\stackrel{{f}^*_L}{\longmapsto}\sum_{q\in \bar{f}^{-1}(p)}[\chi_q]\in L_k(E)\,,\label{lsum}
\nonumber
\end{gather*}
where $\bar{f}\colon FP(\bar{E})\to FP(\bar{F})$ is the map induced by $(f^0,f^1)$, defines a contravariant functor. Furthermore, 
the same assignment restricted to the subcategory 
given by graphs with finitely many vertices yields a contravariant functor to the category~$\mathrm{ZUKA}$. 
\end{theorem}
\begin{proof}
It follows from Lemma~\ref{contrafp} combined 
with Lemma~\ref{extlem} that we have a~contravariant functor
\begin{gather}
\mathrm{Obj}({\rm POG})\ni E\stackrel{}{\longmapsto}k\bar{E}\in\mathrm{Obj}(KA),
\nonumber\\
\mathrm{Mor}({\rm POG})\ni ((f^0,f^1)\colon E\to F)
\stackrel{}{\longmapsto}\big(\bar{f}^*\colon k\bar{F}\to k\bar{E}\big)\in\mathrm{Mor}(KA),
\nonumber\\
k\bar{F}\ni \chi_p\stackrel{\bar{f}^*}{\longmapsto}\sum_{q\in \bar{f}^{-1}(p)}\chi_q\in k\bar{E} \,.
\end{gather}
To complete the proof of the first statement, it suffices to show that $\bar{f}^*$ descends to a $\mathbb{Z}$-graded algebra homomorphism 
$L_k(F)\to L_k(E)$ when we restrict to the admissible category of graphs.
 
First, we have to demonstrate that
\begin{equation}
\forall\;f\in\mathrm{Mor}({\rm CRTBPOG})\;\forall\;x,y\in F^1\colon
 [\bar{f}^*(\chi_{x^*})][\bar{f}^*(\chi_y)]=\delta_{x,y}[\bar{f}^*(\chi_{t_F(x)})],
\end{equation}
which is equivalent to
\begin{equation}\label{ck1cohn}
\forall\;f\in\mathrm{Mor}({\rm CRTBPOG})\;\forall\;x,y\in F^1\colon 
\sum_{\substack{e_x\in f^{-1}(x)\\ e_y\in f^{-1}(y)}}[\chi_{e_x^*}][\chi_{e_y}]
=\delta_{x,y}\hspace*{-4mm}\sum_{v\in f^{-1}(t_F(x))}[\chi_v].
\end{equation}
For $x\neq y$, we have $f^{-1}(x)\cap f^{-1}(y)=\emptyset$, so \eqref{ck1cohn} clearly holds.
For $x=y$, using the target-bijectivity condition~\eqref{EU}, we compute:
\begin{equation}
\sum_{e_1,\,e_2\in f^{-1}(x)}[\chi_{e_1^*}][\chi_{e_2}]
=
\sum_{e\in f^{-1}(x)}[\chi_{e^*}][\chi_{e}]
 =
\sum_{e\in f^{-1}(x)}[\chi_{t_E(e)}]
 =
\sum_{v\in f^{-1}(t_F(x))}[\chi_{v}].
\end{equation}

Next, much as above, we need to show that
\begin{equation}\label{suffice}
\forall\;w\in\mathrm{reg}(F)\colon \sum_{\substack{x\in s_F^{-1}(w)\\e_1,e_2\in (f^1)^{-1}(x)}}[\chi_{e_1}][\chi_{e_2^*}]=\sum_{v\in (f^0)^{-1}(w)}[\chi_v].
\end{equation}
For starters, using \eqref{214}, we obtain
\begin{align}
\sum_{v\in (f^0)^{-1}(w)}[\chi_v]
&=\sum_{\substack{v\in (f^0)^{-1}(w)\\ e\in s_E^{-1}(v)}}[\chi_e][\chi_{e^*}]=
\sum_{e\in (f^0\circ s_E)^{-1}(w)}[\chi_e][\chi_{e^*}]
\nonumber\\ &=
\sum_{e\in (s_F\circ f^1)^{-1}(w)}[\chi_e][\chi_{e^*}]=
\sum_{\substack{x\in s_F^{-1}(w)\\ e\in (f^1)^{-1}(x)}}[\chi_e][\chi_{e^*}].
\end{align}
Next, using the target-injectivity condition \eqref{EU},  we compute the left-hand side of~\eqref{suffice}:
\begin{equation}
\sum_{\substack{x\in s_F^{-1}(w)\\ e_1,e_2\in (f^1)^{-1}(x)}}[\chi_{e_1}][\chi_{e_2^*}]=\sum_{\substack{x\in s_F^{-1}(w)\\ e\in (f^1)^{-1}(x)}}[\chi_{e}][\chi_{e^*}].
\end{equation}

Summarizing, we have proved that $\bar{f}^*$ descends to an algebra homomorphism\linebreak
 \mbox{$f^*_L\colon L_k(F)\to L_k(E)$}, which obviously preserves the $\mathbb{Z}$-grading because $\bar{f}$ preserves the lengths of paths. 
Finally, the functoriality of the assignment 
\begin{equation}
{\rm Mor}({\rm CRTBPOG})\ni (f^0,f^1)\longmapsto {f}^*_L\in {\rm Mor}(\mathrm{ZKA})
\end{equation}
 is immediate, and the unitality of ${f}^*_L$ for graphs with finitely many vertices follows from  the unitality of $\bar{f}^*$
under the same restriction.
\end{proof}

\begin{corollary}\label{injsurcor}
If $f:E\to F$ is an injective (surjective) morphism in the admissible category of graphs, then $f^*_L$ is surjective (injective).
\end{corollary}
\begin{proof}
By Theorem~\ref{contralthm}, the admissibility of $f$ implies the existence of a $\mathbb{Z}$-graded algebra homomorphism $f^*_L:L_k(F)\to L_k(E)$. If $f:E\to F$ is 
injective, then $[\chi_v]=f^*_L([\chi_{f(v)}])$ for all $v\in E^0$ and $[\chi_e]=f^*_L([\chi_{f(e)}])$ for all $e\in E^1$. Consequently, $f^*_L$ is surjective because is 
$L_k(E)$ is generated by $[\chi_v]$, $v\in E^0$, $[\chi_e]$ and $[\chi_{e^*}]$, $e\in E^1$. Vice versa, if $f:E\to F$ is surjective, then $f^{-1}(w)\neq\emptyset$ for all 
$w\in F^0$, so
\begin{equation}
\forall w\in F^0:\qquad f^*_L([\chi_w])=\sum_{v\in (f^0)^{-1}(w)}[\chi_v]\neq 0.
\end{equation}
Here the last step follows from the basis theorem~\cite[Corollary~1.5.12]{aasm17}. Now, from the graded uniqueness theorem for Leavitt path algebras~\cite[Theorem~2.2.15]{aasm17}, we conclude that $f^*_L$ is injective. 
\end{proof}

\subsection{Graph C*-algebras}
For basic facts about C*-algebras, we refer the reader to~\cite{j-d77}. Let us now consider the Leavitt path algebra construction in 
the case $k=\mathbb{C}$. One defines an anti-homomorphism ${}^*:L_\mathbb{C}(E)\to L_\mathbb{C}(E)$ by mapping the generators as follows: 
\begin{equation}
([\chi_v])^*:=[\chi_v],\quad ([\chi_e])^*:=[\chi_{e^*}],\quad ([\chi_{e^*}])^*:=[\chi_e],\quad v\in E^0,~e\in E^1\,.
\end{equation}
The above defined ${}^*$ operation turns $L_\mathbb{C}(E)$ into a complex $*$-algebra. Thus we arrive at the key definition.
\begin{definition}{{\rm (}\cite[Definition~5.2.1]{aasm17}{\rm )}}\label{graphc*}
Let $E$ be a graph. The {\em graph C*-algebra} $C^*(E)$ of $E$ is the universal C*-envelope of the complex $*$-algebra 
$L_\mathbb{C}(E)$.
\end{definition}

It is worth noting that, unlike for general universal C*-envelopes, for the graph C*-algebras the canonical $*$-homomorphism 
$L_\mathbb{C}(E)\to C^*(E)$ is injective (e.g., see~\cite[Theorem~5.2.9]{aasm17}). Better still, the gauge action \eqref{algaction}
extends to graph C*-algebras by
universality and continuity.
Note also that  Definition~\ref{graphc*} is equivalent to~\cite[Definition~1]{flr00} defining $C^*(E)$ as the universal C*-algebra 
generated by mutually orthogonal projections $P_v$, $v\in E^0$, and partial isometries $S_e$, $e\in E^1$, with mutually 
orthogonal ranges, satisfying
\begin{enumerate}
\item $S^*_eS_e=P_{t(e)}$ for all $e\in E^1$,
\item $P_v=\sum_{e\in s^{-1}(v)}S_eS_e^*$ for all $v\in {\rm reg}(E)$,
\item $S_eS_e^*\leq P_{s(e)}$ for all $e\in E^1$.
\end{enumerate}
In what follows, we will need the notaion $S_p:=S_{e_1}S_{e_2}\ldots S_{e_n}$ and $p^*:=e_n^*\ldots e_1^*$
 for a positive-length path $p=e_1\ldots e_n$, and $S_v:=P_v$ for a 0-length path~$v$. Note that this notation allows us to replace $[\chi_p]$ by $S_p$.

Let $\mathrm{GC^*\!A}$ denote the category of $U(1)$-C*-algebras together with $U(1)$-equivariant $*$-homo\-morphisms,
 and let $\mathrm{GUC^*\!A}$ denote the category of unital $U(1)$-C*-algebras and unital $U(1)$-equivariant $*$-homomorphisms.
The following C*-algebraic counterpart of Theorem~\ref{contralthm} is the discrete-topology case of Katsura's
\cite[Proposition~2.9]{t-k06}. As we were
unaware of the just cited result prior to obtaining our own version, we retain its complete and self-contained original proof,
which is routed via the algebraic constructions of path algebras and Leavitt path algebras absent in Katsura's work.
\begin{corollary}\label{corgc*}
The assignment
\begin{gather*}
\mathrm{Obj}({\rm CRTBPOG})\ni E\stackrel{}{\longmapsto}C^*(E)\in\mathrm{Obj}(\mathrm{GC^*\!A}),\nonumber\\
\mathrm{Mor}({\rm CRTBPOG})\ni ((f^0,f^1)\colon E\to F)
\stackrel{}{\longmapsto}\big(f^*_{C^*}\colon C^*(F)\to C^*(E)\big)\in\mathrm{Mor}(\mathrm{GC^*\!A}),
\nonumber\\
C^*(F)\ni S_p\stackrel{f^*_{C^*}}{\longmapsto}\sum_{q\in \bar{f}^{-1}(p)}S_q\in C^*(E)\,,
\nonumber
\end{gather*}
where $\bar{f}\colon FP(\bar{E})\to FP(\bar{F})$ is the map induced by $(f^0,f^1)$, defines a contravariant functor. Furthermore, 
the same assignment restricted to the subcategory 
given by graphs with finitely many vertices yields a contravariant functor to the category~$\mathrm{GUC^*\!A}$. 
\end{corollary}
\begin{proof}
From Theorem~\ref{contralthm}, in the case $k=\mathbb{C}$, we know that every 
$(f^0,f^1)\in{\rm Mor}(\mathrm{CRTBPOG})$ gives rise to 
a homomorphism ${f}^*_L\colon L_\mathbb{C}(F)\to L_\mathbb{C}(E)$ that preserves the $\mathbb{Z}$-grading
 coming from the path lengths. It is automatically a $*$-homomorphism because
\begin{equation}
{f}^*_L([\chi_p]^*)={f}^*_L([\chi_{p^*}])=\sum_{q^*\in\bar{f}^{-1}(p^*)}[\chi_{q^*}]=\left(\sum_{q\in\bar{f}^{-1}(p)}
[\chi_q]\right)^*={f}^*_L([\chi_p])^*.
\end{equation}
Since the grading is equivalent to the gauge action, ${f}^*_L$ is gauge equivariant. 
From~\cite[Theorem~4.4]{at-11}, we infer that ${f}^*_L$ extends to
 a unique $*$-homomorphism $f^*_{C^*}\colon C^*(F)\to C^*(E)$, which is also gauge equivariant by the continuity
of the gauge action and $*$-homomorphisms between C*-algebras. Finally, the unitality of $f^*_{C^*}$ for graphs with
finitely many vertices follows from the unitality of  ${f}^*_L$ under the same restriction.
\end{proof}
Furthermore, note that using~\cite[Theorem~5.2.12]{aasm17} instead of~\cite[Theorem~2.2.15]{aasm17}, 
we obtain a C*-algebraic version of Corollary~\ref{injsurcor}.
\begin{corollary}\label{c*injsur}
If $f:E\to F$ is an injective (surjective) morphism in the admissible category of graphs, then $f^*_{C^*}$ is surjective (injective).
\end{corollary}

\section{New types of admissible graph homomorphisms}
\noindent

\subsection{Generalized foldings}
For starters, let us observe that mapping the
disjoint union of a graph with its copy into the graph by identifying the same elements in two different copies is
 an admissible morphism 
inducing the diagonal map:
\begin{equation}\label{diagonal}
f\colon E\sqcup E \longrightarrow E \quad\leadsto\quad f^*_{C^*}\colon C^*(E)\ni a\longmapsto (a,a) \in C^*(E)\oplus C^*(E).
\end{equation}
In this section, we replace  disjoint unions of graphs by pushouts over an admissible subgraph. 

\begin{definition}\label{genfold}
Let $G$ be an admissible subgraph of a graph $F$.
A {\em generalized folding} is the graph homomorphism
\begin{equation}\label{foldeq}
g:F\underset{G}{\amalg}F\longrightarrow F
\end{equation}
given by the universal property of the pushout applied to the identity maps $F\to F$.
\end{definition}
\noindent It is clear that generalized foldings are morphisms in the admissible category of graphs and that Definition~\ref{genfold} 
generalizes Example~\ref{stallings}. It also includes~\eqref{diagonal} as a special case obtained by taking $G$ to be the empty graph. Let us now further exemplify Definition~\ref{genfold}.
\begin{example}\label{quantsphere}
{\rm 
For any $n\in\mathbb{N}$ and $q\in [0,1)$, the C*-algebra $C(S^{2n}_q)$ of the Hong--Szyma\'nski even quantum sphere $S_q^{2n}$ 
is isomorphic to the graph C*-algebra of the graph $L_{2n}$ (see~\cite[Section~5.1]{hs02}).

\begin{figure}[h]
\begin{tikzpicture}[auto,swap]
\tikzstyle{vertex}=[circle,fill=black,minimum size=3pt,inner sep=0pt]
\tikzstyle{edge}=[draw,->]
\tikzstyle{cycle1}=[draw,->,out=130, in=50, loop, distance=30pt]
   
\node[vertex] (0) at (0,0) {};
\node[vertex] (1) at (-0.5,-0.5) {};
\node[vertex] (2) at (0.5,-0.5) {};

\path (0) edge[cycle1] node[above] {} (0);
\path (0) edge[edge] node[above left] {} (1);
\path (0) edge[edge] node[above right] {} (2);
\end{tikzpicture}\qquad
\begin{tikzpicture}[auto,swap]
\tikzstyle{vertex}=[circle,fill=black,minimum size=3pt,inner sep=0pt]
\tikzstyle{edge}=[draw,->]
\tikzstyle{cycle1}=[draw,->,out=130, in=50, loop, distance=30pt]
   
\node[vertex] (0) at (0,0) {};
\node[vertex] (1) at (1,0) {};
\node[vertex] (2) at (0,-0.5) {};
\node[vertex] (3) at (1,-0.5) {};

\path (0) edge[cycle1] node {} (0);
\path (1) edge[cycle1] node {} (1);
\path (0) edge[edge] node {} (1);
\path (0) edge[edge] node {} (2);
\path (0) edge[edge] node {} (3);
\path (1) edge[edge] node {} (2);
\path (1) edge[edge] node {} (3);
\end{tikzpicture}
\caption{The graph $L_{2n}$ for $n=1$ and $n=2$.}
\end{figure}
\noindent Similarly, the C*-algebra $C(S^{2n-1}_q)$ of the Vaksman--Soibelman odd quantum sphere $S^{2n-1}_q$~\cite{vs90} 
is isomorphic to the graph C*-algebra of the graph $L_{2n-1}$ (see~\cite[Section~4.1]{hs02}).

\begin{figure}[h]
\begin{tikzpicture}[auto,swap]
\tikzstyle{vertex}=[circle,fill=black,minimum size=3pt,inner sep=0pt]
\tikzstyle{edge}=[draw,->]
\tikzstyle{cycle1}=[draw,->,out=130, in=50, loop, distance=30pt]
   
\node[vertex] (0) at (0,1) {};

\path (0) edge[cycle1] node[above] {} (0);
\end{tikzpicture}\qquad
\begin{tikzpicture}[auto,swap]
\tikzstyle{vertex}=[circle,fill=black,minimum size=3pt,inner sep=0pt]
\tikzstyle{edge}=[draw,->]
\tikzstyle{cycle1}=[draw,->,out=130, in=50, loop, distance=30pt]
   
\node[vertex] (0) at (0,0) {};
\node[vertex] (1) at (1,0) {};

\path (0) edge[cycle1] node {} (0);
\path (1) edge[cycle1] node {} (1);
\path (0) edge[edge] node {} (1);
\end{tikzpicture}
\caption{The graph $L_{2n-1}$ for $n=1$ and $n=2$.}
\end{figure}
\noindent Finally, the C*-algebra $C(B^{2n}_q)$ of the Hong--Szyma\'nski even quantum ball $B^{2n}_q$ is isomorphic 
to the graph C*-algebra of $M_n$ (see~\cite[Section~3.1]{hs08} and~\cite{hs02}).

\begin{figure}[h]
\begin{tikzpicture}[auto,swap]
\tikzstyle{vertex}=[circle,fill=black,minimum size=3pt,inner sep=0pt]
\tikzstyle{edge}=[draw,->]
\tikzstyle{cycle1}=[draw,->,out=130, in=50, loop, distance=30pt]
   
\node[vertex] (0) at (0,0) {};
\node[vertex] (1) at (0,-0.5) {};

\path (0) edge[cycle1] node[above] {} (0);
\path (0) edge[edge] node[above left] {} (1);
\end{tikzpicture}\qquad
\begin{tikzpicture}[auto,swap]
\tikzstyle{vertex}=[circle,fill=black,minimum size=3pt,inner sep=0pt]
\tikzstyle{edge}=[draw,->]
\tikzstyle{cycle1}=[draw,->,out=130, in=50, loop, distance=30pt]
   
\node[vertex] (0) at (0,0) {};
\node[vertex] (1) at (1,0) {};
\node[vertex] (2) at (0.5,-0.5) {};

\path (0) edge[cycle1] node {} (0);
\path (1) edge[cycle1] node {} (1);
\path (0) edge[edge] node {} (1);
\path (0) edge[edge] node {} (2);
\path (1) edge[edge] node {} (2);
\end{tikzpicture}
\caption{The graph $M_{n}$ for $n=1$ and $n=2$.}
\end{figure}

It is clear that $L_{2n-1}$ is an admissible subgraph of $L_{2n}$ and that $L_{2n}=M_{n}\sqcup_{L_{2n-1}}M_{n}$, 
for every $n\in\mathbb{N}$ (e.g., see~\cite[Section~4.1]{hrt20}). Therefore, we can consider the generalized folding
\begin{equation}\label{foldsphere}
L_{2n}=M_{n}\underset{L_{2n-1}}{\amalg}M_{n}\longrightarrow M_n\,.
\end{equation}
Note that the admissible graph homomorphism~\eqref{foldsphere} induces a gauge-equivariant unital 
\mbox{$*$-ho}\-mo\-mor\-phism
\begin{equation}
C(B^{2n}_q)\longrightarrow C(S^{2n}_q),
\end{equation}
which is an analog of flattening an even dimensional sphere to an even dimensional ball of the same dimension.
For instance, in the case $n=1$, we have a graph homomorphism
\begin{equation}
\begin{tikzpicture}[auto,swap]
\tikzstyle{vertex}=[circle,fill=black,minimum size=3pt,inner sep=0pt]
\tikzstyle{edge}=[draw,->]
\tikzstyle{cycle1}=[draw,->,out=130, in=50, loop, distance=30pt]
   
\node[vertex,label=below:$v$] (0) at (0,0) {};
\node[vertex,label=left:$w_1$] (1) at (-0.5,-0.5) {};
\node[vertex,label=right:$w_2$] (2) at (0.5,-0.5) {};

\path (0) edge[cycle1] node[above] {$u$} (0);
\path (0) edge[edge] node[above left] {$f_1$} (1);
\path (0) edge[edge] node[above right] {$f_2$} (2);

\end{tikzpicture}\quad 
\begin{tikzpicture} 
\node (0) at (0,0) {};
\node (2) at (1.5,0) {};
\node (1) at (0,-0.5) {};
\path (0) edge[draw,->] node {} (2);
\end{tikzpicture}\quad
\begin{tikzpicture}[auto,swap]
\tikzstyle{vertex}=[circle,fill=black,minimum size=3pt,inner sep=0pt]
\tikzstyle{edge}=[draw,->]
\tikzstyle{cycle1}=[draw,->,out=130, in=50, loop, distance=30pt]
   
\node[vertex,label=right:$a$] (0) at (0,0) {};
\node[vertex,label=right:$b$] (1) at (0,-0.5) {};

\path (0) edge[cycle1] node[above] {$e$} (0);
\path (0) edge[edge] node[left] {$x$} (1);

\end{tikzpicture},
\end{equation}
where $v\mapsto a$, $w_1,w_2\mapsto b$, $u\mapsto e$, and $f_1, f_2\mapsto x$. Much in the same way, in the case $n=3$, 
we have the following graph homomorphism:
\begin{equation}
\begin{gathered}
\centering
\xymatrix{
\begin{tikzpicture}[scale=0.4,auto,swap]
\centering
\tikzstyle{vertex}=[circle,fill=black,minimum size=3pt,inner sep=0pt]
\tikzstyle{edge}=[draw,->]
\tikzset{every loop/.style={min distance=20mm,in=130,out=50,looseness=40}}
    \node[vertex] (1) at (-2,0) {};
    \node[vertex] (2) at (0,0) {};
    \node[vertex] (3) at (2,0) {};
    \node[vertex] (4) at (-2,-2) {};
     \node[vertex] (5) at (2,-2) {};
    \path (1) edge [draw, <-, anchor=center, loop above] node {} (1);
    \path (2) edge [draw, <-, anchor=center, loop above] node {} (2);
    \path (3) edge [draw, <-, anchor=center, loop above] node {} (3);
    \path (1) edge [edge] node {} (2);
    \path (2) edge [edge] node {} (3);
    \path (1) edge [edge] node {} (4);
    \path (3) edge [edge] node {} (5);
    \path (1) edge [edge] node {} (5);
    \path (2) edge [edge] node {} (4);
    \path (2) edge [edge] node {} (5);
    \path (3) edge [edge] node {} (4);
    \path (1) edge [edge,bend right] node {} (3);
\end{tikzpicture}\quad
\begin{tikzpicture} 
\node (0) at (0,0) {};
\node (2) at (1.5,0) {};
\node (1) at (0,-0.5) {};
\path (0) edge[draw,->] node {} (2);
\end{tikzpicture}\quad
\begin{tikzpicture}[scale=0.4,auto,swap]
\centering
\tikzstyle{vertex}=[circle,fill=black,minimum size=3pt,inner sep=0pt]
\tikzstyle{edge}=[draw,->]
\tikzset{every loop/.style={min distance=20mm,in=130,out=50,looseness=40}}
    \node[vertex] (1) at (-2,0) {};
    \node[vertex] (2) at (0,0) {};
    \node[vertex] (3) at (2,0) {};
    \node[vertex] (4) at (0,-2) {};
    \path (1) edge [draw, <-, anchor=center, loop above] node {} (1);
    \path (2) edge [draw, <-, anchor=center, loop above] node {} (2);
    \path (3) edge [draw, <-, anchor=center, loop above] node {} (3);
    \path (1) edge [edge] node {} (2);
    \path (2) edge [edge] node {} (3);
    \path (1) edge [edge] node {} (4);
    \path (2) edge [edge] node {} (4);
    \path (3) edge [edge] node {} (4);
    \path (1) edge [edge,bend right] node {} (3);
\end{tikzpicture}\quad.
}
\end{gathered}
\end{equation}
}
\end{example}

Next, if $F$ is an admissible subgraph of $E$, then we have the injective graph homomorphism
\begin{equation}\label{admlemfor}
f:F\underset{G}{\amalg}F\longrightarrow F\underset{G}{\amalg}E
\end{equation}
given by the universal property of the pushout applied to the obvious maps.
Similarly, from the universal property of the pushout applied to the obvious maps, 
we obtain the graph homomorphism
\begin{equation}\label{ig}
\iota_g:F\underset{G}{\amalg}E\longrightarrow E.
\end{equation}

\begin{proposition}\label{admlem}
The injective graph homomorphism (\ref{admlemfor}) is admissible. 
\end{proposition}
\begin{proof}
We have the following commutative diagram of graph homomorphisms:
\begin{equation}
\xymatrix{
F \ar[r] &
F\underset{G}{\amalg}F \ar[r] &
F\underset{G}{\amalg}E
\\
G \ar[u] \ar[r] & F \ar[r] \ar[u] & E \ar[u]
}\,.
\end{equation}
It is clear that both the left square and the outer rectangle are pushout diagrams in the category~{\rm OG}. It follows by standard categorical arguments 
that the right square is a pushout diagram (see~\cite[Proposition~11.10]{ahs90} for the dual result for pullbacks). The admissibility of $G\hookrightarrow F$ 
implies the admissibility of $F\hookrightarrow F\amalg_G F$ by Lemma~\ref{admpush}. We use Lemma~\ref{admpush} again, to infer the admissibility of 
$F{\amalg}_GF\hookrightarrow F{\amalg}_GE$ from the admissibility of $F\hookrightarrow E$ and $F\hookrightarrow F\amalg_G F$.
\end{proof}

\subsection{Line graphs}
 Now we explore a class of non-injective admissible graph homomorphisms whose induced $*$-homomorphisms are surjective.
 For starters, let us recall the notion of a~line graph~\cite{hn-60}. 
The {\em line graph} $LE=(LE^0,LE^1,s_{LE},t_{LE})$ of a graph $E$ is defined as follows:
\begin{equation}
LE^0:=E^1,\quad LE^1:=FP_2(E),\quad s_{LE}(ee')=e,\quad t_{LE}(ee')=e',\qquad e,e'\in E^1.
\end{equation} 
Next, consider the graph homomorphism
\begin{equation}\label{dualgraphmap}
f^0:LE^0\longrightarrow E^0,\quad f^0(e):=s_E(e),\quad f^1:LE^1\longrightarrow E^1,\quad f^1(ee'):=e.
\end{equation}
The following result is the discrete-topology version of~\cite[Proposition~2.6]{t-k21}: 
\begin{proposition}
Let $E$ be a row-finite graph without sinks. Then the graph homomorphism \eqref{dualgraphmap} is admissible and surjective. 
\end{proposition}
\begin{proof}
Since $E$ is row-finite, $f$ is automatically proper. 
Next, as for every $e\in E^1$ the map
\begin{equation}
(f^1)^{-1}(e)\ni ee'\longmapsto t_{LE}(ee')=e' \in (f^0)^{-1}(t_E(e))
\end{equation}
is clearly bijective, we conclude that $f$ satisfies the target-bijectivity condition. Furthermore, $f$ is automatically regular 
because all vertices in $E$ and $LE$ are regular. Finally, the surjectivity of $f$ follows from the assumption that there are no sinks.
\end{proof}
\noindent By Corollary~\ref{c*injsur}, $f:LE\to E$ induces an injective $*$-homomorphism $f^*_{C^*}:C^*(E)\to C^*(LE)$. 
However, it is known that this $*$-homomorphism is also surjective (e.g., see~\cite[Corollary~2.6]{i-r05}). Indeed, the inverse $*$-
homomorphism $(f^*_{C^*})^{-1}:C^*(LE)\longrightarrow C^*(E)$ is given by
\begin{equation}
P_e\longmapsto S_eS_e^*,\qquad S_{ee'}\longmapsto S_e S_{e'}S_{e'}^*.
\end{equation}
Thus, we obtain a class of examples of non-injective admissible graph homomorphisms
inducing surjective $*$-homomorphisms of graph C*-algebras.

\subsection{Locally derived graphs}
We end the section by discussing another class of non-injective admissible graph homomorphisms coming from finite group actions. 
A {\em base  graph} (or a {\em  voltage graph})~\cite[\S 2.1]{gt-87} is a graph $(E^0,E^1,s_E,t_E)$ along with a function 
$L:E^1\to\Gamma$, where $\Gamma$ is a  group. Given a base graph, one can construct a {\em derived graph} 
$(E^0_L, E^1_L,s_L, t_L)$~\cite[\S 2.1.1]{gt-87} (or a {\em skew-product graph}~\cite[Definition 2.1]{kp-99}) as follows:
\begin{gather}
E_L^0:=E^0\times\Gamma,\qquad E_L^1:=E^1\times\Gamma,
\nonumber\\
s_L((e,g)):=(s_E(e),g),\quad t_L((e,g)):=(t_E(e),L(e)g),\quad e\in E^1,\quad g\in\Gamma.
\end{gather}
There is a natural surjective graph homomorphism 
$\pi:E_L\to E$, called the {\em covering projection}, given by
\begin{equation}\label{skewproj}
\pi^0((v,g)):=v,\quad \pi^1((e,g)):=e,\quad v\in E^0,\quad e\in E^1,\quad g\in\Gamma.
\end{equation}
\begin{proposition}\label{covadm}
If $\,\Gamma$ is finite, then the covering projection $\pi:E_L\to E$ is admissible.
\end{proposition}
\begin{proof}
Since $\Gamma$ is finite, $\pi$ is proper. Next, for any $e\in E^1$, the map
\begin{equation}
(\pi^1)^{-1}(e)\ni (e,g)\longmapsto (t_E(e),L(e)g)\in (\pi^0)^{-1}(t_E(e))
\end{equation}
has an inverse given by $(t_E(e),h)\mapsto (e,L(e)^{-1}h)$, so $\pi$ satisfies the target-bijectivity condition. 
Finally, let $v\in E^0$ be a regular vertex and $g\in\Gamma$. Then, by
 the regularity of $v$,
there is $e\in E^1$ such that $s_L((e,g))=(v,g)$, so $(v,g)$ not a sink. Also, since $v$ is not an infinite emitter, neither is~$(v,g)$. 
We conclude thus that $(v,g)$ is a regular vertex for any $g\in\Gamma$. Hence, $\pi$ is regular, so we infer  that
$\pi$ is admissible, as claimed.
\end{proof}

It follows from Corollary~\ref{corgc*} that $\pi^*_{C^*}\colon C^*(E)\to C^*(E_L)$ is determined by the assignments:
\begin{equation}
P_v\longmapsto \sum_{g\in\Gamma}P_{(v,g)},\qquad S_e\longmapsto \sum_{g\in\Gamma}S_{(e,g)}\,.
\end{equation}

\begin{example}\label{shrinking}
{\rm Consider the surjective admissible graph homomorphism $\pi:A_n\to A_1$ given by collapsing all edges to one edge and all vertices to one vertex:
\begin{equation}
\begin{tikzpicture}[scale=0.4,auto,swap]
\centering
\tikzstyle{vertex}=[circle,fill=black,minimum size=3pt,inner sep=0pt]
\tikzstyle{edge}=[draw,->]
\node (0) at (5,0) {};
    \node[vertex] (1) at (0,0) {};
    \node[vertex] (2) at (1,-1) {};
    \node[vertex] (3) at (2,-1) {};
    \node[vertex] (4) at (3,0) {};
    \node[vertex] (5) at (3,1) {};
    \node[vertex] (6) at (2,2) {};
    \node[vertex] (7) at (1,2) {};
    \node[rotate=60] (8) at (0.5,1) {{\tiny $\cdots$}};
    \node (9) at (-2,1) {{\tiny $n$ edges}};
    \path (1) edge[edge] node {} (2);
    \path (2) edge[edge] node {} (3);
    \path (3) edge[edge] node {} (4);
    \path (4) edge[edge] node {} (5);
    \path (5) edge[edge] node {} (6);
    \path (6) edge[edge] node {} (7);
\end{tikzpicture}
\begin{tikzpicture}[scale=0.4,auto,swap]
\centering
\node (1) at (0,0) {};
\node (2) at (0,-1) {$\longrightarrow$};
\node (3) at (0,-2) {};
\node (4) at (0,-3) {};
\end{tikzpicture}
\quad
\begin{tikzpicture}[scale=0.4,auto,swap]
\centering
\tikzstyle{vertex}=[circle,fill=black,minimum size=3pt,inner sep=0pt]
\tikzstyle{edge}=[draw,->]
\node (0) at (0,0) {};
    \node[vertex] (1) at (0,-1) {};
    \node (3) at (0,-2) {};
    \path (1) edge [edge, anchor=center, loop above, min distance=20mm, in=130, out=50, looseness=40] node {} (1);
\end{tikzpicture}.
\end{equation}
Note that $\pi$ is the projection from a derived graph to its base graph. Indeed, let $\Gamma:=\mathbb{Z}/n\mathbb{Z}$ be the cyclic group of order $n$, 
and let $L:A^1_1\to\Gamma$ map the single edge in $A_1$ to $[1]\in \mathbb{Z}/n\mathbb{Z}$. 
Then it is clear that $A_n$ is isomorphic to $(A_1)_L$ and that $\pi$ is the covering projection.
The morphism $\pi$ induces an injective $*$-homomorphism $\pi^*_{C^*}:C^*(A_1)\to C^*(A_n)$,
 which combined with the standard identification $C^*(A_n)\cong C(S^1)\otimes M_n(\mathbb{C})$ (e.g., see~\cite[Example~2.14]{i-r05}), yields
\begin{equation}\label{nsqrt}
C(S^1)\longrightarrow C(S^1)\otimes M_n(\mathbb{C}),\qquad u\longmapsto \sum_{i=1}^{n-1}(1\otimes E_{i(i+1)})+u\otimes E_{n1}.
\end{equation}
Here $u$ is the unitary generator of $C(S^1)$ and $E_{ij}$, $i,j=1,\ldots, n$, are the matrix units of $M_n(\mathbb{C})$. 
To end with, observe that precomposing the map \eqref{nsqrt} with $C(S^1)\ni u\mapsto u^n\in C(S^1)$ produces the standard tensorial inclusion
\begin{equation}
C(S^1)\longrightarrow C(S^1)\otimes M_n(\mathbb{C}),\qquad u\longmapsto u\otimes 1.
\end{equation}}
\end{example}


Now, let us generalize the construction of a derived graph. For a graph $E$, let $\{E_i\}_{i\in I}$ be a family of 
pairwise-disjoint subgraphs of $E$,
 and let $\{\Gamma_i\}_{i\in I}$ be a family of groups. Assume that there is a labelling map 
$L_i:t_E^{-1}(E^0_i)\to\Gamma_i$ for every $i\in I$,
  and combine them to a map 
  \begin{equation}
 \mathcal{E}^1:=\bigcup_{i\in I}t_E^{-1}(E^0_i)\stackrel{ \mathcal{L}}{\longrightarrow} \bigsqcup_{i\in I}\Gamma_i=:
\mathcal{G}.
  \end{equation} 
(Note that we can view $\mathcal{G}$ as a groupoid in the obvious way.)
  We call the pair $(E,\mathcal{L})$ a {\em base graph}. The idea of constructing 
a locally derived graph is that in the base graph we replace 
  every subgraph $E_i$ by its derived graph, and unfold (keeping the source fixed) 
every edge that does not belong to any of the subgraphs but
ends in a subgraph. More precisely, we have the following:
 \begin{definition}
 The {\em locally derived graph} $(E_\mathcal{L}^0,E^1_\mathcal{L},s_\mathcal{L},t_\mathcal{L})$ of a base graph $(E,
\mathcal{L})$ is given by:
 \begin{gather}
 E_\mathcal{L}^0:= \Big(E^0\setminus  \bigcup_{i\in I} E^0_i\Big)\; \sqcup\, \;\bigcup_{i\in I} (E^0_i\times\Gamma_i),\quad
E_\mathcal{L}^1:= \Big(E^1\setminus  \bigcup_{i\in I} t_E^{-1}(E^0_i)\Big)\; \sqcup\, \;\bigcup_{i\in I} 
(t_E^{-1}(E^0_i)\times\Gamma_i),
\nonumber \\
  \begin{cases}
 s_\mathcal{L}((e,g)):=
 (s_E(e),g)\quad\text{for}\quad e\in E^1_i,\,g\in\Gamma_i\,,i\in I,
 \\
s_\mathcal{L}((e,g)):=
 (s_E(e),1_j)\quad\text{for}\quad e\in  t_E^{-1}(E^0_i)\setminus E^1_i,\,g\in\Gamma_i\,,\, s_E(e)\in E^0_j\,,\,i,j\in I,
 \\
s_\mathcal{L}((e,g)):=
 s_E(e)\quad\text{for}\quad e\in  t_E^{-1}(E^0_i)\setminus E^1_i,\,g\in\Gamma_i\,,\, \,i\in I,\,
s_E(e)\not\in \bigcup_{j\in I}E^0_j\,,
 \\
  s_\mathcal{L}(e):=(s_E(e),1_i)\quad\text{for}\quad e\not\in\mathcal{E}^1,\, s_E(e)\in E^0_i,\, i\in I,
  \\
  s_\mathcal{L}(e):=s_E(e)\quad\text{for}\quad e\not\in\mathcal{E}^1,\, s_E(e)\not\in \bigcup_{i\in I}E^0_i\, ,
 \end{cases} \nonumber\\
 \begin{cases}
 t_\mathcal{L}((e,g)):=
 (t_E(e),\mathcal{L}(e)g)\quad\text{for}\quad e\in t^{-1}_E(E^0_i),\,g\in\Gamma_i\,,\, i\in I,
 \\
  t_\mathcal{L}(e):=t_E(e)\quad\text{for}\quad e\not\in\mathcal{E}^1.
 \end{cases} \label{targetL}
 \end{gather}
 Here $1_i$\,, $i\in I$, is the neutral element of $\,\Gamma_i$.
 \end{definition}
 
 \begin{lemma}
 The following assignments
 \[
 \begin{cases}
 \pi^0_E((v,g)):=v\quad\text{for}\quad v\in E^0_i\,, g\in\Gamma_i\,, i\in I,\\
 \pi^0_E(v):=v\quad\text{for}\quad v\notin \bigcup_{i\in I}E_i^0\,,\\
 \pi^1_E((e,g)):=e\quad\text{for}\quad e\in t_E^{-1}(E^0_i)\,, g\in\Gamma_i\,, i\in I,\\
 \pi^1_E(e):=e\quad\text{for}\quad e\notin\mathcal{E}^1,
 \end{cases}
 \]
 define a surjective graph homomorphism $\pi_E:E_\mathcal{L}\to E$.
 \end{lemma}
 \begin{proof}
 First, note that, for all $e\in E^1_i$, $g\in\Gamma_i\,, i\in I$, we have
 \begin{equation}\label{trivial1}
 \pi^0_E(s_\mathcal{L}((e,g)))=\pi^0_E((s_E(e),g))=s_E(e)=s_E(\pi^1_E((e,g))).
\end{equation}
Much in the same way, if $e\in t_E^{-1}(E^0_i)\setminus E^1_i$, $g\in\Gamma_i$, and $s_E(e)\in E^0_j$ for some $i,j\in I$, then
\begin{equation}
 \pi^0_E(s_\mathcal{L}((e,g)))=\pi^0_E((s_E(e),1_j))=s_E(e)=s_E(\pi^1_E((e,g))).
\end{equation}
Next, if $e\in  t_E^{-1}(E^0_i)\setminus E^1_i$, $s_E(e)\not\in \bigcup_{j\in I}E^0_j$, and $g\in\Gamma_i$ for some $i\in I$, then
\begin{equation}
\pi^0_E(s_\mathcal{L}((e,g)))=\pi^0_E(s_E(e))=s_E(e)=s_E(\pi^1_E((e,g))).
\end{equation}
Now, if $e\not\in\mathcal{E}^1$ and $s_E(e)\in E^0_i$ for some $i\in I$, we obtain 
\begin{equation}
\pi^0_E(s_\mathcal{L}(e))=\pi^0_E((s_E(e),1_i))=s_E(e)=s_E(\pi^1_E(e)).
\end{equation}
Finally, if $e\not\in\mathcal{E}^1$ and $s_E(e)\not\in \bigcup_{i\in I}E^0_i$, 
then 
\begin{equation}\label{trivial2}
\pi^0_E(s_\mathcal{L}(e))=\pi^0_E(s_E(e))=s_E(e)=s_E(\pi^1_E(e)).
\end{equation}
The calculations for the target map are much simpler and analogous to \eqref{trivial1} and~\eqref{trivial2}.
 \end{proof}
We call the graph homomorphism $\pi_E:E_\mathcal{L}\to E$ a \emph{projection folding}. Under additional assumptions, we can prove that 
this graph homomorphism is admissible:
 \begin{proposition}\label{derprop}
If, for all $i\in I$, the group $\Gamma_i$ is finite and the inclusion $E_i\hookrightarrow E$ is regular, then the   projection 
folding $\pi_E\colon E_\mathcal{L}\to E$ is admissible.
 \end{proposition}
 \begin{proof}
First, since all $\Gamma_i$ are finite, $\pi_E$ is proper. 
To prove target bijectivity, we consider two cases. If $e\in t_E^{-1}(E_i^0)$ for some $i\in I$, then we obtain a map
\begin{equation}
(\pi^1_E)^{-1}(e)\ni (e,g)\longmapsto t_\mathcal{L}((e,g))=(t_E(e),\mathcal{L}(e)g)\in (\pi^0_E)^{-1}(t_E(e))
\end{equation}
whose inverse is given by $(t_E(e),g)\mapsto (e,\mathcal{L}(e)^{-1}g)$. Next, if $e\notin\mathcal{E}^1$, then the map
\begin{equation}
(\pi^1_E)^{-1}(e)=\{e\}\longrightarrow \{t_\mathcal{L}(e)\}=\{t_E(e)\}=(\pi^0_E)^{-1}(t_E(e))
\end{equation}
is, clearly, a bijection.
Furthermore, the regularity of $\pi_E$ at $v\in E^0_i$, $i\in I$, follows from regularity of the inclusions $E_i\hookrightarrow E$ and the reasoning as in 
the proof of Proposition~\ref{covadm}. Finally, the regularity at other vertices $v\in E^0\setminus\bigcup_{i\in I}E^0_i$ is immediate.
\end{proof}

\begin{example}
{\rm 
Consider the graph $R_2$ of the Cuntz algebra $\mathcal{O}_2$, i.e.\ $R^0_2:=\{v\}$, $R^1_2:=\{e,f\}$.
Take the subgraph $E$ of $R_2$ given by $E^0:=\{v\}$ and $E^1:=\{e\}$, and consider
\[
\mathcal{L}:t_{R_2}^{-1}(v)=R^1_2\longrightarrow \mathbb{Z}/2\mathbb{Z},\quad e\longmapsto \gamma,\quad f\longmapsto 1\,,
\]
where $\gamma$ is the generator of $\mathbb{Z}/2\mathbb{Z}$. The graph $(R_2)_\mathcal{L}$ is presented in the picture below.
\[
\begin{tikzpicture}[scale=0.8,auto,swap]
\centering
\tikzstyle{vertex}=[circle,fill=black,minimum size=3pt,inner sep=0pt]
\tikzstyle{edge}=[draw,->]
\tikzset{every loop/.style={min distance=20mm,in=130,out=50,looseness=40}}
    \node[vertex] (1) at (-2,0) {};
    \node[vertex] (2) at (2,0) {};
    \node (3) at (-2.2,-0.5) {\tiny $(v,1)$};
    \node (4) at (2.2,-0.5) {\tiny $(v,\gamma)$};
    \node (5) at (-2,1.5) {\tiny $(f,1)$};
    \node (6) at (0,1) {\tiny $(e,1)$};
    \node (7) at (0,0.275) {\tiny $(f,\gamma)$};
    \node (8) at (0,-1) {\tiny $(e,\gamma)$};
    \path (1) edge [edge, anchor=center, loop above] node {} (1);
    \path (1) edge [edge, bend left] node {} (2);
    \path (2) edge [edge, bend left] node {} (1);
    \path (1) edge [edge] node {} (2);
\end{tikzpicture}
\]
It is clear that $C^*((R_2)_\mathcal{L})$ is simple (e.g., see~\cite[Proposition~4.2]{i-r05}). Next, by Corollary~\ref{corgc*}, the admissible graph 
homomorphism $\pi_{R_2}:(R_2)_\mathcal{L}\to R_2$ induces a unital $*$-homomorphism 
$(\pi_{R_2})^*_{C^*}\colon\mathcal{O}_2\to C^*((R_2)_\mathcal{L})$ given on generators by
\[
S_e\longmapsto S_{(e,1)}+S_{(e,\gamma)},\qquad S_f\longmapsto S_{(f,1)}+S_{(f,\gamma)}.
\]
The above $*$-homomorphism is injective because $\mathcal{O}_2$ is simple. However,
since $K_0(C^*((R_2)_\mathcal{L}))$ \mbox{$=\mathbb{Z}/2\mathbb{Z}$} by~\cite[Theorem~7.16]{i-r05}, 
and $K_0(\mathcal{O}_2)=0$, it cannot be surjective. Hence, $\mathcal{O}_2$ is a proper subalgebra of 
$C^*((R_2)_\mathcal{L})$ and the $K_0$-class of $1\in  C^*((R_2)_\mathcal{L}))$ is zero. On the other hand, it follows 
from~\cite[Theorem~6.5]{m-r95} that $C^*((R_2)_\mathcal{L})$ is stably isomorphic with~$\mathcal{O}_3$. Better still, as the $K_0$-class of $1\in M_2(\mathcal{O}_3)$ is zero, using again \cite[Theorem~6.5]{m-r95}, one can conclude
 that $C^*((R_2)_\mathcal{L})\cong M_2(\mathcal{O}_3)$.\footnote{
We owe this argument to Jack Spielberg.}
}
\end{example}

Let $F$ be a subgraph of a graph $E$ such that $t_E^{-1}(F^0)\subseteq F^1$ and let $(F,\mathcal{L})$ be a base graph. 
Then any family $\{F_i\}_{i\in I}$ of pairwise-disjoint subgraphs of $F$ is a family of 
pairwise-disjoint subgraphs of $E$ and $t^{-1}_E(F^0_i)=t_F^{-1}(F^0_i)$ for all $i\in I$.
Therefore, given $\mathcal{L}\colon\mathcal{F}^1\to\mathcal{G}$, we can construct both 
locally derived graphs $F_\mathcal{L}$ and $E_\mathcal{L}$.
\begin{proposition}\label{adml}
If $F$ is an admissible subgraph of a graph $E$ and $(F,\mathcal{L})$ is a base graph, then $F_\mathcal{L}$ is 
an admissible subgraph of $E_{\mathcal{L}}$.
\end{proposition}
\begin{proof}
We need check the conditions (A1) and (A2) of Definition~\ref{admissiblesubgraph}. 
To prove (A1), suppose that $w\in F^0_\mathcal{L}\cap {\rm reg}(E_\mathcal{L})$ is such that 
$t_{E_\mathcal{L}}(s_{E_\mathcal{L}}^{-1}(w))\subseteq E^0_{\mathcal{L}}\setminus F^0_{\mathcal{L}}$. 
First, note that, by the construction of $E_\mathcal{L}$, $w$ cannot be of the form $(v,g)\in F^0_i\times\Gamma_i$, where $g\neq 1_i$, $i\in I$. 
Therefore, $w$ has to be of the form $(v,1_i)\in F^0_i\times\Gamma_i$ for some $i\in I$, or $v\in F^0\setminus \bigcup_{i\in I}F^0_i$. 
In both cases, it follows immediately from~\eqref{targetL} that the condition 
$t_{E_\mathcal{L}}(s_{E_\mathcal{L}}^{-1}(w))\subseteq E^0_{\mathcal{L}}\setminus F^0_{\mathcal{L}}$ implies that 
$t_{E}(s_{E}^{-1}(v))\subseteq E^0\setminus F^0$, which contradicts the fact that $E^0\setminus F^0$ is saturated. 
Hence, $E^0_\mathcal{L}\setminus F^0_\mathcal{L}$ is saturated. 
Finally, using again~\eqref{targetL} combined with the condition (A2) for $F\subseteq E$, we conclude the condition (A2) for 
$F_\mathcal{L}\subseteq E_\mathcal{L}$.
\end{proof}

\section{Pullbacks of graph algebras from pushouts of graphs}
\noindent
In this section, we prove our main theorems stating under which conditions the two contravariant functors given by 
Lemma~\ref{contrafp} and Theorem~\ref{contralthm} turn
pushouts of directed graphs into pullbacks of algebras. 
\subsection{Path algebras}
We begin with  the pushout-to-pullback theorem for path algebras.
\begin{theorem}\label{pushpullpaththm}
Let the diagram
\begin{equation}\label{pushdiag0}
\xymatrix{
&
E\underset{G}{\amalg}F
&\\
E
\ar[ur]^{(\iota^0_{E},\iota^1_{E})}& & 
F
\ar[ul]_{(\iota^0_{F},\iota^1_{F})}\\
&
G
\ar[ur]_{(g^0,g^1)}\ar[ul]^{(f^0,f^1)}&
}
\end{equation}
be a pushout diagram in the category of graphs and proper graph homomorphisms such that
\vspace*{-2mm}
\begin{enumerate}
\item
both $f^0$ and $g^0$ are injective (vertex injectivity),
\item
$t_\amalg(x)=s_\amalg(y)\;\Rightarrow\;(x,y\in\iota_{E^1}(E^1)\text{ or }x,y\in\iota_{F^1}(F^1))$
(one color),
\item
$f^1$ or $g^1$ is injective (one-sided injectivity).
\end{enumerate}
Then, for any field $k$, the contravariant functor $\mathrm{Map}_f(\cdot,k)$ 
transforms the above pushout diagram to the following one-surjective pullback
diagram in the category of  algebras over~$k$:
\begin{equation*}
\xymatrix{
&
k\left(E\underset{G}{\amalg}F\right)
\ar[dl]_{\iota_E^*} \ar[dr]^{\iota_F^*}&\\
kE \ar[dr]_{f^*} & & 
kF\ar[dl]^{g^*}\\
&
kG\,.&
}
\end{equation*}
Here $f$ and $g$ are the maps induced by the graph homomorphisms
$E\stackrel{(f^0,f^1)}{\longleftarrow} G\stackrel{(g^0,g^1)}{\longrightarrow} F$, respectively.
Furthermore, if $E^0$ and $F^0$ are finite, then all algebras in the above diagram and homomorphisms 
between them are unital, and the diagram is a pullback diagram in the category of unital algebras.
\end{theorem}
\begin{proof}
To begin with,  Lemma~\ref{oneinjective} guarantees that \eqref{pushdiag0} is a pushout diagram in the 
category of graphs and proper graph homomorphisms for any proper graph homomorphisms
\begin{equation}
E\stackrel{(f^0,f^1)}{\longleftarrow} G\stackrel{(g^0,g^1)}{\longrightarrow} F
\end{equation}
 satisfying the assumptions
(1) through~(3). Next, as proper graph homomorphisms induce finite-to-one maps between the path spaces,
and injective graph homomorphisms induce injective maps between the path spaces,
 Lemma~\ref{answer} and Lemma~\ref{oneinjective} yield the following pushout diagram in the category of sets
and finite-to-one maps:
\begin{equation}
\xymatrix{
&
FP(E)\underset{FP(G)}{\amalg}FP(F)
&\\
FP(E)
\ar[ur]^{\iota_{E}}& & 
FP(F).
\ar[ul]_{\iota_{F}}\\
&
FP(G)
\ar[ur]_{g}\ar[ul]^{f}&
}
\end{equation}
Now, Lemma~\ref{contrafp}
 turns this diagram into the  commutative diagram in the category of algebras:
\begin{equation}
\xymatrix{
&
k\left(E\underset{G}{\amalg}F\right)
\ar[dl]_{\iota_E^*} \ar[dr]^{\iota_F^*}&\\
kE \ar[dr]_{f^*} & & 
kF\ar[dl]^{g^*}\\
&
kG\,.&}
\end{equation}
Furthermore, setting $K=k$ in Lemma~\ref{finitepushtopull}, we conclude that the above diagram is
a pullback diagram in the category of sets and maps.
We can combine these two facts to conclude that the above diagram is 
a pullback diagram in the category of algebras.

To prove the last part of the theorem, assume that both $E^0$ and $F^0$ are finite.
As graph homomorphisms are assumed to be proper, not only the finiteness of 
$E^0\amalg_{G^0}F^0$, but also the finiteness of $G^0$ follow from the finiteness of $E^0$ and~$F^0$,
so all algebras are unital, as claimed. Next, the unitality of all homomorphisms follows from
Lemma~\ref{contrafp}. Finally, under these circumstances, the maps witnessing the universality 
in the category of unital algebras are
evidently unital, so the above diagram is 
a pullback diagram in the category of unital algebras.
\end{proof}

\subsection{Leavitt path algebras and graph C*-algebras}

For starters, if $f\colon G\to E$ is an injective admissible graph homomorphism, then it follows from Remark~\ref{remark3.5} and \cite[Corollary~2.4.13(ii)]{aasm17}
that  $\ker f_L^*$ is the ideal generated by
\begin{equation}
\left\{[\chi_v]~|~v\in E^0\setminus f^0(G^0)\right\}\cup \Big{\{}[\chi_w]-\!\!\!\!\!\!\!\!\!\!\!\!\!\!\!\!\!
\sum_{e\in s_E^{-1}(w)\cap t_E^{-1}(f^0(G^0))}\!\!\!\!\!\!\!\!\!\!\!\!\!\!\!\!\![\chi_e][\chi_{e^*}]~|
~w\in B_{E^0\setminus f^0(G^0)}\Big{\}}.
\end{equation}
As $E^0\setminus f^0(G^0)$ is hereditary by Proposition~\ref{hermorph}, we can combine the above with  \cite[Lemma~2.4.6]{aasm17} to obtain:
\begin{align}\label{kerspan}
\ker f^*_L&={\rm span}\Big{\{}[\chi_\alpha][\chi_v][\chi_{\beta^*}]~\Big{|}~\substack{\alpha,\beta\in FP(E),~v\in E^0\setminus 
f^0(G^0),\\t_E(\alpha)=t_E(\beta)=v}\Big{\}}\nonumber\\
&+{\rm span}\Big{\{}[\chi_\mu]\Big{(}[\chi_w]-\hspace*{-12mm}\sum_{e\in s^{-1}_E(w)\cap\, t^{-1}_E(f^0(F^0))}
\hspace*{-12mm}[\chi_e][\chi_{e^*}]\Big{)}
[\chi_{\nu^*}]~\Big{|}~\substack{\mu,\nu\in FP(E),~w\in B_{E^0\setminus f(G^0)},\\t_E(\mu)=t_E(\nu)=w}\Big{\}}.
\end{align}

Before we consider our pushout-to-pullback theorem for Leavitt path algebras, we need further technical results.  
\begin{lemma}\label{kerver}
Let $G$ and $E$ be arbitrary graphs, and let $(f^0,f^1)\colon G\to E$ be an admissible graph homomorphism.
Then
\begin{equation*}
\forall v\in E^0\colon\quad [\chi_v]\in\ker f^*_L\quad\iff\quad v\in E^0\setminus f^0(G^0).
\end{equation*}
\end{lemma}
\begin{proof}
The claim of the lemma is equivalent to the following statement
\begin{equation}
\forall v\in E^0\colon\quad \sum_{w\in(f^0)^{-1}(v)}[\chi_w]=0\quad\iff\quad (f^0)^{-1}(v)=\emptyset.
\end{equation} 
The implication ($\Leftarrow$) is true due to the convention that the sum over the empty set equals~$0$. The other implication follows from the fact that the elements $[\chi_w]$, $w\in(f^0)^{-1}(v)$, are linearly 
independent by \cite[Corollary~1.5.12]{aasm17}.
\end{proof}

\begin{lemma}\label{kerint}
Let $(\iota^0_E,\iota^1_E):E\to P$ and $(\iota^0_F,\iota_F^1):F\to P$ be admissible graph homomorphisms, and let 
$(\iota_E)^*_L$ and $(\iota_F)^*_L$ be the induced $\mathbb{Z}$-graded algebra homomorphisms. Then
\begin{equation*}
\ker (\iota_E)^*_L\cap\ker (\iota_F)^*_L=\{0\}\quad\iff\quad P^0=\iota_E^0(E^0)\cup \iota_F^0(F^0).
\end{equation*}
\end{lemma}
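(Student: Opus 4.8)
The plan is to prove both implications by comparing the explicit generating sets for $\ker\iota_E^*$ and $\ker\iota_F^*$ supplied by Corollary~\ref{kerform}. Write $P^0 = \iota_E^0(E^0)\cup\iota_F^0(F^0)$ as the hypothesis to be matched. Set $H_E := P^0\setminus\iota_E^0(E^0)$ and $H_F := P^0\setminus\iota_F^0(F^0)$; these are the hereditary saturated subsets cut out by $\ker\iota_E^*$ and $\ker\iota_F^*$ via Lemma~\ref{kerver}, so by \eqref{herset} we have $H_E=\{v\in P^0\mid[\chi_v]\in\ker\iota_E^*\}$ and similarly for $H_F$.

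For the direction ($\Leftarrow$), assume $P^0=\iota_E^0(E^0)\cup\iota_F^0(F^0)$, equivalently $H_E\cap H_F=\emptyset$. The intersection $\ker\iota_E^*\cap\ker\iota_F^*$ is again a $\mathbb{Z}$-graded two-sided ideal, so by \eqref{gradeal}--\eqref{herset} it is determined by the hereditary saturated set $H:=\{v\in P^0\mid[\chi_v]\in\ker\iota_E^*\cap\ker\iota_F^*\}=H_E\cap H_F=\emptyset$ together with the breaking-vertex corrections over $B_H=B_\emptyset=\emptyset$. Hence the generating set in \eqref{gradeal} is empty and $\ker\iota_E^*\cap\ker\iota_F^*=\{0\}$. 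One must check that the intersection of two $\mathbb{Z}$-graded ideals is $\mathbb{Z}$-graded (clear, since an element lies in a graded ideal iff all its homogeneous components do) and that the subset $H$ attached to the intersection is indeed hereditary and saturated (it is an intersection of two such sets, hence hereditary; saturatedness of an intersection of saturated sets is immediate from the definition since a desaturating vertex for $H_E\cap H_F$ would be desaturating for at least one of them).

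For the direction ($\Rightarrow$), argue contrapositively: suppose $P^0\neq\iota_E^0(E^0)\cup\iota_F^0(F^0)$, so there is a vertex $v\in H_E\cap H_F$. Then by Lemma~\ref{kerver} applied to both $\iota_E$ and $\iota_F$ we get $[\chi_v]\in\ker\iota_E^*$ and $[\chi_v]\in\ker\iota_F^*$, while $[\chi_v]\neq 0$ in $L_k(P)$ (a nonzero homogeneous basis element of the path algebra survives in the Leavitt path algebra — vertex idempotents are always nonzero). Therefore $\ker\iota_E^*\cap\ker\iota_F^*\neq\{0\}$, completing the contrapositive.

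The only real subtlety is the ($\Leftarrow$) step, where one needs that a $\mathbb{Z}$-graded ideal $I$ with $\{v\in P^0\mid[\chi_v]\in I\}=\emptyset$ and no breaking-vertex generators is actually zero; this is exactly the content of the structure theorem cited after \eqref{herset} (e.g.~\cite[Theorem~2.4.8]{aasm17}), which says every $\mathbb{Z}$-graded ideal is generated by the displayed set \eqref{gradeal}, and that set is empty here. The rest is bookkeeping with the explicit generating sets of Corollary~\ref{kerform}. I expect no genuine obstacle beyond being careful that the hypotheses of the admissible category (needed so that Corollary~\ref{kerform} and Lemma~\ref{kerver} apply to $\iota_E,\iota_F$) are in force, which they are by assumption.
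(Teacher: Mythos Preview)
Your proof is correct and follows essentially the same approach as the paper: both directions use Lemma~\ref{kerver} to identify the vertex set $H$ attached to the graded ideal $\ker\iota_E^*\cap\ker\iota_F^*$, and the structure theorem for graded ideals (cited after~\eqref{herset}) to conclude that $H=\emptyset$ forces the ideal to vanish. You add some extra bookkeeping (gradedness of the intersection, $B_\emptyset=\emptyset$, hereditary/saturated properties of $H_E\cap H_F$) that the paper leaves implicit, but the argument is the same.
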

\begin{proof}
Since $\ker(\iota_E)^*_L\cap\ker(\iota_F)^*_L$ is a $\mathbb{Z}$-graded ideal, by [1, Theorem 2.5.8], 
\[
\ker(\iota_E)^*_L\cap\ker(\iota_F)^*_L=\{0\}\iff \{v\in P^0~|~[\chi_v]\in \ker(\iota_E)^*_L\cap\ker(\iota_F)^*_L\}=\emptyset.
\]
Finally, by Lemma 6.2, the above set equals $P^0\setminus (\iota_E^0(E^0)\cup\iota_F^0(F^0))$, which ends the proof.
\end{proof}

We are now ready for our main result:
\begin{theorem}\label{main}
Let $(f^0,f^1)$ and $(g^0,g^1)$ be admissible graph homomorphisms and let
\begin{equation}\label{pushdiag}
\xymatrix{
&
E\underset{G}{\amalg}F
&\\
E
\ar[ur]^{(\iota_E^0,\iota_E^1)}& & 
F
\ar[ul]_{(\iota_F^0,\iota_F^1)}\\
&
G
\ar[ur]_{(g^0,g^1)}\ar[ul]^{(f^0,f^1)}&
}
\end{equation}
be a pushout diagram in the  category {\rm OG} of graphs and graph homomorphisms. Assume also that
\begin{enumerate}
\item[{\rm (P1)}] 
$(f^0,f^1)$ is injective,
\item[{\rm (P2)}]
 $g^0$ restricted to $(f^0)^{-1}(B_{E^0\setminus f^0(G^0)})$ is injective,
\item[{\rm (P3)}]
$\iota_E^0(B_{E^0\setminus f^0(G^0)})\subseteq B_{P^0\setminus\iota_F^0(F^0)}$.
\end{enumerate}
Then, for any field $k$, there exists the commutative diagram of the induced $\mathbb{Z}$-graded algebra 
\mbox{homo}\-mor\-phisms
\begin{equation}\label{pulldiag}
\xymatrix{
&
L_k(E\underset{G}{\amalg}F)
\ar[dl]_{(\iota_E)^*_L} \ar[dr]^{(\iota_F)^*_L}&\\
L_k(E)
\ar[dr]_{f^*_L}& & 
L_k(F)
\ar[dl]^{g^*_L}\\
& L_k(G) .&
}
\end{equation}
Moreover, it is a left-surjective pullback diagram in the category $\mathrm{ZKA}$ of $\mathbb{Z}$-graded
algebras and $\mathbb{Z}$-graded algebra homomorphisms.
Finally, if $E^0$ and $F^0$ are finite, then all algebras in the above 
diagram and homomorphisms between them are unital, and the diagram is a pullback diagram in the category {\rm ZUKA} 
of unital $\mathbb{Z}$-graded
algebras and unital $\mathbb{Z}$-graded algebra homomorphisms.
\end{theorem}
\begin{proof}
Throughout the proof, let $P:=E\amalg_GF$. 
The existence of the diagram \eqref{pulldiag} follows from Lemma~\ref{admpush} and Theorem~\ref{contralthm}.
Its commutativity is due to the commutativity of~\eqref{pushdiag}, the surjectivity of $f^*_L$ is due to (P1) and Corollary~\ref{injsurcor}.
To show that~\eqref{pulldiag} is a pullback,
 by~\cite[Proposition~3.1]{gk-p99} (cf.~\cite[Lemma~4.1]{hkt20}), we have to prove that
\begin{enumerate}
\item[(1)] $\ker(\iota_E)^*_L\cap\ker(\iota_F)^*_L=\{0\}$,
\item[(2)] $(g^*_L)^{-1}(f^*_L(L_k(E)))=(\iota_F)^*_L(L_k(P))$,
\item[(3)] $\ker f^*_L=(\iota_E)^*_L(\ker(\iota_F)^*_L)$.
\end{enumerate}
Since~\eqref{pushdiag} is a pushout, we infer that $P^0=\iota_E^0(E^0)\cup\iota_F^0(F^0)$.
Therefore, the condition (1) follows from~Lemma~\ref{kerint}. Furthermore, as (P1) implies the injectivity of 
$(\iota_F^0,\iota_F^1)$, we conclude that
both $f^*_L$ and $(\iota_F)^*_L$ are surjective, which proves the condition (2). Finally, as the diagram~\eqref{pulldiag} is 
commutative, to prove the condition (3), it suffices to show that $\ker f^*_L\subseteq (\iota_E)^*_L(\ker(\iota_F)^*_L)$. Note that 
$(\iota_E)^*_L(\ker(\iota_F)^*_L)$ is a vector subspace, so it is enough to prove that all the elements that span $\ker f^*_L$ 
by~\eqref{kerspan} belong to~$(\iota_E)^*_L(\ker(\iota_F)^*_L)$. 

First, let us prove that $(\iota_E)^*_L(\ker(\iota_F)^*_L)$ contains the generators of $\ker f^*_L$. 
For starters, since~\eqref{pushdiag} is a pushout, $\iota_E^0(E^0\setminus f^0(G^0))\subseteq P^0\setminus\iota_F^0(F^0)$ 
and $(\iota_E^0)^{-1}(\iota_E^0(v))=\{v\}$ for any $v\in E^0\setminus f^0(G^0)$.
In turn, this implies that $(\iota_E)^*_L([\chi_{\iota_E^0(v)}])=[\chi_v]$, 
where $[\chi_{\iota_E^0(v)}]\in\ker(\iota_F)^*_L$ by Lemma~\ref{kerver}. 

Now we have to take care of breaking vertices, i.e.\ to show that
\begin{equation}
\forall\, w\in B_{E^0\setminus f^0(G^0)}\colon\;\Omega_w:=[\chi_w]-\!\!\!\!\!\!\!\!\!\!\!\!\sum_{e\in s^{-1}_E(w)\cap f^1(G^1)}\!\!\!\!\!\!\!\!\!\!\!\!
[\chi_e][\chi_{e^*}]
\in (\iota_E)^*_L(\ker(\iota_F)^*_L).
\end{equation}
To this end, let us take $v:=(f^0)^{-1}(w)$ and note that $v\in {\rm reg}(G)$ by the injectivity of~$f^0$. 
Next, the commutativity of the pushout diagram and the condition (P3)  imply that 
 $\iota_F^0(g^0(v))=\iota_E^0(f^0(v))\in B_{P^0\setminus \iota_F^0(F^0)}$, so $g^0(v)$ cannot be an infinite emitter. Also, 
$g^0(v)$ cannot be a 
sink because $v$ is regular. Consequently, $g^0(v)\in {\rm reg}(F)$. Furthermore, note that $\iota_E^0(w)=\iota_E^0(w')$ implies that $w'\in f^0(G^0)$. 
Better still, 
\begin{equation}\label{star}
(f^0)^{-1}((\iota_E^0)^{-1}(\iota_E^0(w)))=(g^0)^{-1}((\iota_F^0)^{-1}(\iota_F^0(f^0(v))))=(g^0)^{-1}(g^0(v))\subseteq {\rm reg}(G)
\end{equation}
because $g$ is regular and $g^0(v)\in{\rm reg}(F)$. 

Now, we need to argue that $w$ is the only element of
\begin{equation}
V_w:=(\iota_E^0)^{-1}(\iota_E^0(w))
\end{equation}
that is an infinite emitter. Indeed, suppose that $|s_E^{-1}(w)|=\infty$ for $w'\in V_w$. Then, as $(f^0)^{-1}(w)\in {\rm reg}(G)$ by~\eqref{star}, we 
conclude that $w'\in {\rm reg}(f(G))$ by the injectivity of $f^0$. Hence, $w'\in B_{E^0\setminus f^0(G^0)}$, so $w'=w$ by the condition (P2).

We are now ready to define the following element of $L_k(P)$:
\begin{equation}
\widetilde{\Omega}_w:=
[\chi_{\iota_F^0(g^0(v))}]-\!\!\!\!\!\!\!\sum_{e\in s_F^{-1}(g^0(v))}\!\!\!\!\!\!\![\chi_{\iota_F^1(e)}][\chi_{\iota_F^1(e)^*}]-
\!\!\!\!\!\!\!\!\!\!\!\!\!\!\!\!\!\sum_{e'\in s_E^{-1}(V_w\setminus\{w\})\setminus 
f^1(G^1)}\!\!\!\!\!\!\!\!\!\!\!\!\!\!\!\!\![\chi_{\iota_E^1(e')}][\chi_{\iota_E^1(e')^*}].
\end{equation}
First, using the injectivity of $\iota_F$ and the fact that every $e'\notin\iota_F^1(F^1)$, we check that
\begin{equation}
(\iota_F)^*_L(\widetilde{\Omega}_w)=[\chi_{g^0(v)}]-\!\!\!\!\!\!\!\sum_{e\in s_F^{-1}(g^0(v))}\!\!\!\!\!\!\![\chi_e][\chi_{e^*}]=0,
\end{equation}
so $\widetilde{\Omega}_w\in\ker(\iota_F)^*_L$. 

Next, to prove that $(\iota_E)^*_L(\widetilde{\Omega}_w)=\Omega_w$, first we show that
\begin{equation}\label{doublestar}
(\iota_E^1)^{-1}(\iota_F^1(s_F^{-1}(g^0(v)))=s_E^{-1}(V_w)\cap f^1(G^1).
\end{equation}
If $x$ belongs to the left-hand side, then $x\in f^1(G^1)$. Next, as $\iota_E^1(x)=\iota_F^1(y)$ for $y\in s_F^{-1}(g^0(v))$, we can compute
\begin{equation}
\iota^0_E(s_E(x))=s_F(\iota_F^1(x))=
s_P(\iota_F^1(y))=\iota_F^0(s_F(y))=\iota_F^0(g^0(v))=\iota_E^0(f^0(v))=\iota_F^0(w).
\end{equation}
Hence, the left-hand side is contained in the right-hand side. To prove the opposite inclusion, take 
$f^1(z)\in s_E^{-1}(V_v)$. Then \begin{equation}\iota_F^0(s_E(f^1(z)))=\iota_E^0(w)=\iota_E^0(f^0(v))=\iota_F^0(g^0(v)).
\end{equation}
On the other hand,
\begin{equation}
\iota_E^0(s_E(f^1(z)))=\iota_E^0(f^0(s_G(z)))=\iota_F^0(g^0(s_G(z))).
\end{equation}
Combining these two calculations with the injectivity of $\iota_F^0$, we infer that $g^0(s_G(z))=g^0(v)$, so 
\begin{equation}
s_F(g^1(z))=g^0(s_G(z))=g^0(v).
\end{equation}
Hence,
\begin{equation}
\iota_E^1(f^1(z))=\iota_F^1(g^1(z))\in\iota_F^1(s_F^{-1}(g^0(v))),
\end{equation}
which proves the desired inclusion.

Now, using the injectivity of $\iota_E^1$ on $E^1\setminus f^1(G^1)$ and~\eqref{doublestar}, we can compute:
\begin{align}
(\iota_E)^*_L(\widetilde{\Omega}_w)&=(\iota_E)^*_L([\chi_{\iota_E^0}])-\!\!\!\!\!\!\!\!\!\!\!\!\!\!\sum_{e\in (\iota_E^1)^{-1}(\iota_F^1(s_F^{-1}(g^0(v))))}
\!\!\!\!\!\!\!\!\!\!\!\!\!\!
[\chi_e][\chi_{e^*}]-\!\!\!\!\!\!\!\sum_{e'\in s_E^{-1}(V_w\setminus\{w\})\setminus f^1(G^1)}\!\!\!\!\!\!\![\chi_{e'}][\chi_{e'^*}]
\nonumber \\
&=\sum_{w'\in V_w}[\chi_{w'}]-\!\!\!\!\!\!\sum_{e\in s_E^{-1}(V_w)\cap f^1(G^1)}\!\!\!\!\!\![\chi_e][\chi_{e^*}]-
\!\!\!\!\!\!\sum_{e'\in s_E^{-1}(V_w\setminus\{w\})\setminus f^1(G^1)}\!\!\!\!\!\!
[\chi_{e'}][\chi_{e'^*}]
\nonumber\\
&=[\chi_w]-\!\!\!\!\!\!\!\sum_{e\in s_E^{-1}(w)\cap f^1(G^1)}\!\!\!\!\!\!\![\chi_e][\chi_{e^*}]+
\!\!\!\!\sum_{w'\in V_w\setminus\{w\}}\!\!\!\![\chi_{w'}]
\nonumber\\
&\phantom{=[\chi_w].}-\!\!\!\!\!\!\sum_{e\in s_E^{-1}(V_w\setminus\{w\})\cap f^1(G^1)}\!\!\!\!\!\![\chi_e][\chi_{e^*}]
-\!\!\!\!\!\sum_{e'\in s_E^{-1}(V_w\setminus\{w\})\setminus f^1(G^1)}\!\!\!\!\![\chi_{e'}][\chi_{e'^*}]
\nonumber \\
&=\Omega_w+\!\!\!\sum_{w'\in V_w\setminus\{w\}}\!\!\!\Big([\chi_{w'}]-\!\!\!\!\!\sum_{e\in s_E^{-1}(w')}\!\!\!\!\![\chi_e][\chi_{e^*}]\Big)
\nonumber\\
&=\Omega_w.
\end{align}
Here the last step follows from $V_w\setminus\{w\}\subseteq {\rm reg}(E)$.

We can now turn to lifting positive-length paths $\alpha$ in $E$ ending in 
\begin{equation}
(E^0\setminus f^0(G^0))\cup B_{E^0\setminus f^0(G^0)}.
\end{equation}
To begin with, if $t_{PE}(\alpha)\notin f^0(G^0)$, then $(\iota_E^0)^{-1}(\iota_E^0(t_{PE}(\alpha)))=\{t_{PE}(\alpha)\}$, so \begin{equation}
(\iota_E)^*_L([\chi_{\iota_E(\alpha)}])=[\chi_\alpha]
\end{equation}
by Lemma~\ref{targetpath}. Hence, the first spanning set of (6.3) is contained in $(\iota_E)^*_L(L_k(P))$ because 
$[\chi_{\alpha^*}]=(\iota_E)^*_L([\chi_{\iota_E(\alpha^*)})])$. 

Next, if $t_{PE}(\mu)\in B_{E^0\setminus f^0(G^0)}$, then
\begin{equation}
(\iota_E)^*_L([\chi_{\iota_E(\mu)}])=\!\!\!\!\!\!\sum_{w'\in(\iota_E^0)^{-1}(\iota_E^0(t_{PE}(\mu)))}\!\!\!\!\!\![\chi_{\nu_{w'}}]
\end{equation}
and
\begin{equation}
(\iota_E)^*_L([\chi_{\iota_E(\mu^*)}])=\!\!\!\!\!\!\sum_{w'\in(\iota_E^0)^{-1}(\iota_E^0(t_{PE}(\mu)))}\!\!\!\!\!\![\chi_{\nu_{w'}^*}]
\end{equation}
by Lemma~\ref{targetpath}, where $t_{PE}(\nu_{w'})=w'$ and $\nu_{t_{PE}(\mu)}=\mu$. Now,
\begin{equation}
\sum_{w'\in (\iota_E^0)^{-1}(\iota_E^0(t_E(\mu)))}\!\!\!\!\!\!\!\!\!\!\!\!\!\![\chi_{\nu_{w'}}]\;\;\;\Omega_{t_{PE}(\mu)}=[\chi_\mu]\;\Omega_{t_{PE}(\mu)}
\end{equation}
and 
\begin{equation}
\Omega_{t_{PE}(\mu)}\sum_{w'\in (\iota_E^0)^{-1}(\iota_E^0(t_{PE}(\mu)))}\!\!\!\!\!\!\!\!\!\!\!\!\![\chi_{\nu^*_{w'}}]=\Omega_{t_{PE}(\mu)}[\chi_{\mu^*}].
\end{equation}
Hence, the second spanning set of~\eqref{kerspan} is contained in $(\iota_E)^*_L(L_k(P))$. 

Finally, as the generators $[\chi_v]$, $v\in E^0\setminus f^0(G^0)$, and $\Omega_w$, $w\in B_{E^0\setminus f^0(G^0)}$, are elements of 
$(\iota_E)^*_L(\ker(\iota_F)^*_L)$, and kernels are ideals, we infer that all the spanning elements in (6.3) belong to $(\iota_E)^*_L(\ker(\iota_F)^*_L)$, 
which proves the desired inclusion $\ker f^*_L\subseteq (\iota_E)^*_L(\ker(\iota_F)^*_L)$. To end with, note that the last part of the theorem regarding 
unitality is immediate.
\end{proof}

\begin{theorem}\label{c*cor}
Under the assumptions  of Theorem~\ref{main}, there exists
the commutative diagram of the induced gauge-equivariant $*$-\mbox{homo}\-morphisms
\begin{equation}\label{c*pull}
\xymatrix{
&
C^*(E\underset{G}{\amalg}F)
\ar[dl]_{(\iota_E)_{C^*}^*} \ar[dr]^{(\iota_F)^*_{C^*}}&\\
C^*(E)
\ar[dr]_{f^*_{C^*}}& & 
C^*(F)
\ar[dl]^{g^*_{C^*}}\\
& C^*(G). &
}
\end{equation}
Moreover, it is a left-surjective pullback diagram in the category $\mathrm{GC^*\!A}$. Finally, if $E^0$ and $F^0$ are finite,
 then all C*-algebras in the above 
diagram and homomorphisms between them are unital, and the diagram is a pullback diagram in the category ${\rm GUC^*\!A}$.
\end{theorem}
\begin{proof}
The existence of the above commutative diagram in the category $\mathrm{GC^*\!A}$
follows from Lemma~\ref{admpush} and Corollary~\ref{corgc*}. The surjectivity of $f^*_{C^*}$ is implied by
Corollary~\ref{c*injsur}. Next,
 recall that a $*$-algebra is called AF if it is the union of a directed family of finite-dimensional $*$-subalgebras 
(e.g., see~\cite[Definition~2.2]{a-c22}). 
For any Leavitt path algebra $L_\mathbb{C}(E)$, its degree-$0$ component is a $*$-subalgebra which is AF (e.g., 
see~\cite[Proposition~2.1.14]{aasm17}). 
Therefore, using the surjectivity of $f^*_L$ (see Corollary~\ref{injsurcor}), we can apply \cite[Theorem~2.6]{a-c22} to conclude that 
\eqref{c*pull} is a pullback diagram in the category~$\mathrm{GC^*\!A}$.
 Again, the last part of the theorem regarding unitality is immediate.
\end{proof}

\section{Applications in noncommutative topology}
\noindent
The plethora of applications of Theorem~\ref{c*cor} in noncommutative topology 
goes beyond the strictly contravariant setup that is the scope
of this paper. In particular, it is very promising to combine Theorem~\ref{c*cor} with the mixed-pullback theorem
 \cite[Theorem~5.2]{ht23}. Herein, we 
will focus on
three applications: generalized Stalling's folding, collapsing line graphs to initial graphs, and projecting folding locally derived graphs onto
 their base graphs.

\subsection{Generalized foldings and multichamber even quantum spheres}
Herein, we consider the following special case of~Theorem~\ref{c*cor}:
\begin{corollary}\label{cor6.4}
Let $G$ be an admissible subgraph of $F$ and let $F$ be an admissible subgraph of~$E$. Then the following diagram 
\begin{equation}\label{pushdiag2}
\xymatrix{
&
E
&\\
F\underset{G}{\amalg}E
\ar^{\iota_g}[ur]& & 
F,
\ar_{\iota_f}[ul]\\
&
F\underset{G}{\amalg}F \ar^f[ul] \ar_{g}[ur]&
}
\end{equation}
where the map $\iota_f$ is the inclusion $F\subseteq E$ and the maps $g$, $f$, and $\iota_g$ are given by \eqref{foldeq}, \eqref{admlemfor}, and~\eqref{ig}, 
respectively, is a pushout diagram in the category of graphs. Furthermore, if we assume that $B_{E^0\setminus F^0}\subseteq G^0$, then the induced 
diagram of gauge-equivariant $*$-homomorphisms
\begin{equation}\label{c*pull2}
\xymatrix{
&
C^*(E)
\ar[dl] \ar[dr]&\\
C^*(F\underset{G}{\amalg}E)
\ar[dr] & & 
C^*(F)
\ar[dl]\\
& C^*(F\underset{G}{\amalg}F) &
}
\end{equation}
exists and is a pullback diagram in category of C*-algebras and $U(1)$-equivariant $*$-ho\-mo\-mor\-phisms.
\end{corollary}
\begin{proof}
It is straightforward to show that the diagram~\eqref{pushdiag2} is a pushout diagram in the category~{\rm OG} by verifying the universal property. To 
prove the second claim, we have to check whether the assumptions of Theorem~\ref{c*cor} are satisfied. For starters, we already know that $g$ is 
admissible, and the admissibility of $f$ is established in Proposition~\ref{admlem}. Next, the condition (P1) is immediate because $f$ is injective. Furthermore, 
note that $B_{(F\amalg_G E)^0\setminus (F\amalg_G F)^0}=B_{E^0\setminus F^0}$. Now, since $g^0$ is injective when restricted to $G^0$, the 
assumption $B_{E^0\setminus F^0}\subseteq G^0$ ensures that (P2) holds true. Finally, it also follows that $\iota_g^0(B_{(F\coprod_GE)^0\setminus(F\coprod_GF)^0})=B_{E^0\setminus F^0}$,
so (P3) holds.
\end{proof}

\begin{example}
{\rm Observe that the assumption $B_{E^0\setminus F^0}\subseteq G^0$ is not vacuous. Indeed, consider the following admissible inclusions of graphs 
$G\hookrightarrow F\hookrightarrow E$:
\begin{equation*}
\begin{tikzpicture}[auto,swap]
\tikzstyle{vertex}=[circle,fill=black,minimum size=3pt,inner sep=0pt]
\tikzstyle{edge}=[draw,->]
\tikzstyle{cycle1}=[draw,->,out=130, in=50, loop, distance=30pt]
   
\node[vertex,label=below:$v$] (0) at (0,0) {};

\path (0) edge[cycle1] node[above] {} (0);
\end{tikzpicture}\quad 
\begin{tikzpicture} 
\node (0) at (0,1) {};
\node (1) at (1,1) {};
\path (0) edge[draw,->] node[below] {${}$} (1);
\end{tikzpicture}\quad
\begin{tikzpicture}[auto,swap]
\tikzstyle{vertex}=[circle,fill=black,minimum size=3pt,inner sep=0pt]
\tikzstyle{edge}=[draw,->]
\tikzstyle{cycle1}=[draw,->,out=130, in=50, loop, distance=30pt]
   
\node[vertex, label=below:$v$] (0) at (0,0) {};
\node[vertex, label=below:$w$] (1) at (1,0) {};

\path (0) edge[cycle1] node[above] {} (0);
\path (0) edge[edge] node[left] {} (1);
\path (1) edge[cycle1] node[above] {} (1);
\end{tikzpicture}\quad
\begin{tikzpicture} 
\node (0) at (0,1) {};
\node (1) at (1,1) {};
\path (0) edge[draw,->] node[below] {${}$} (1);
\end{tikzpicture}\quad
\begin{tikzpicture}[auto,swap]
\tikzstyle{vertex}=[circle,fill=black,minimum size=3pt,inner sep=0pt]
\tikzstyle{edge}=[draw,->]
\tikzstyle{cycle1}=[draw,->,out=130, in=50, loop, distance=30pt]
   
\node[vertex, label=below:$v$] (0) at (0,0) {};
\node[vertex, label=below:$w$] (1) at (1,0) {};
\node[vertex, label=below:$z$] (2) at (2.5,0) {};

\path (0) edge[cycle1] node[above] {} (0);
\path (0) edge[edge] node[left] {} (1);
\path (1) edge[cycle1] node[above] {} (1);
\path (1) edge[edge] node[above] {${\tiny \infty}$} (2);
\end{tikzpicture}\quad.
\end{equation*}
Here the symbol $\infty$ above the arrow means that there are infinitely many edges from $w$ to $z$. It is clear that $w\in B_{E^0\setminus F^0}$ 
but $w\notin G^0=\{v\}$.}
\end{example}

Let $n,k\in\mathbb{N}$, $n\geq 1$. We inductively define the {\em multichamber sphere} $S^n_k$ via the following pushout diagram:
\begin{equation}\label{multicham}
\xymatrix{
& S_{k+1}^{n} &\\
S_k^n \ar[ur] & & B^{n}~. \ar[ul]\\
& S^{n-1} \ar@{^{(}->}[ur] \ar@{_{(}->}[ul] &
}
\end{equation}
Here $S^n_0:=B^n$ and $S^{n-1}$ is embedded in $B^n$ as the boundary sphere. Note that $S^n_1=S^n$. 
Next, consider the following pushout diagram of spaces representing the process of collapsing a chamber in a multichamber sphere ($k\geq 1$):
\begin{equation}\label{collapse}
\xymatrix{
& S_{k-1}^{n} &\\
S_k^n \ar[ur] & & B^{n}~. \ar[ul]\\
& S^{n} \ar@{>>}[ur] \ar@{_{(}->}[ul] &
}
\end{equation}
Here the map 
\begin{equation}
S^n=B^n\underset{S^{n-1}}{\amalg}B^n\longrightarrow B^n
\end{equation} 
is the flattening of a sphere defined as in \eqref{foldsphere}, and the map $S^n\hookrightarrow S^n_k$ 
is the inclusion of $S^n$ as a chamber in a multichamber sphere.

The concept of a multichamber sphere admits a straightforward generalization to the realm of noncommutative topology. Since odd quantum balls do not 
have a graph C*-algebraic presentation, in what follows we focus on the even case. 
Now, recall from Example~\ref{quantsphere} that graphs corresponding to the C*-algebras of quantum spheres $S^n_q$ and even quantum balls 
$B^{2n}_q$ are denoted by $L_n$ and $M_{n}$, respectively.
Using Lemma~\ref{admpush}, we inductively define the graph $C_k^{2n}$ of a {\em multichamber even quantum sphere} via the following pushout diagram:
\begin{equation}
\xymatrix{
& C_{k+1}^{2n} &\\
C_k^{2n} \ar[ur] & & M_{n}~. \ar[ul]\\
& L_{2n-1} \ar@{^{(}->}[ur] \ar@{_{(}->}[ul] &
}
\end{equation}
Here $C^{2n}_0:=M_n$ and $L_{2n-1}\hookrightarrow M_n$ is the admissible inclusion of graphs corresponding to the dual boundary map 
$C(B^{2n}_q)\to C(S^{2n-1}_q)$. Note that $C^{2n}_1=L_{2n}$.

\begin{figure}[h]
\begin{center}
\begin{tikzpicture}[scale=0.4,auto,swap]
\centering
\tikzstyle{vertex}=[circle,fill=black,minimum size=3pt,inner sep=0pt]
\tikzstyle{edge}=[draw,->]
\tikzset{every loop/.style={min distance=20mm,in=130,out=50,looseness=40}}
    \node[vertex] (1) at (-2,0) {};
    \node[vertex] (2) at (0,0) {};
    \node[vertex] (4) at (-2,-2) {};
     \node[vertex] (5) at (2,-2) {};
     \node[vertex] (6) at (0,-2) {};
     \node[vertex] (3) at (-4,-2) {};
    \path (1) edge [draw, <-, anchor=center, loop above] node {} (1);
    \path (2) edge [draw, <-, anchor=center, loop above] node {} (2);
    \path (1) edge [edge] node {} (2);
    \path (1) edge [edge] node {} (3);
    \path (2) edge [edge] node {} (3);
    \path (1) edge [edge] node {} (4);
    \path (1) edge [edge] node {} (5);
    \path (1) edge [edge] node {} (6);
    \path (2) edge [edge] node {} (5);
    \path (2) edge [edge] node {} (4);
    \path (2) edge [edge] node {} (6);
\end{tikzpicture}
\end{center}
\caption{The graph $C^{4}_3$.}
\end{figure}

\noindent We call $C(S^{2n}_{k,q}):=C^*(C^{2n}_k)$ the C*-algebra of the multichamber even quantum sphere $S^{2n}_{k,q}$. Next, we consider an analog of the diagram~\eqref{collapse}:
\begin{equation}
\xymatrix{
& C_{k-1}^{2n} &\\
C_k^{2n} \ar[ur] & & M_{n}~. \ar[ul]\\
& L_{2n} \ar@{>>}[ur] \ar@{_{(}->}[ul] &
}
\end{equation}
Here the graph homomorphism $L_{2n}\to M_n$ is a generalized folding \eqref{foldsphere} and $L_{2n}\to C^{2n}_k$ is the admissible inclusion mapping $L_{2n}$ to its rightmost copy inside of $C^{2n}_k$.
For instance, in the case $n=1$, we get the following pushout:
\begin{equation}
\begin{gathered}
\xymatrix{
&
\begin{tikzpicture}[scale=0.4,auto,swap]
\centering
\tikzstyle{vertex}=[circle,fill=black,minimum size=3pt,inner sep=0pt]
\tikzstyle{edge}=[draw,->]
    \node[vertex] (2) at (0,0) {};
    \node[vertex] (4) at (-2,-2) {};
    \node[vertex] (5) at (2,-2) {};
    \node (6) at (0,-2) {$\cdots$};
    \node (7) at (0,-3) {{\tiny $k$-times}};
    \path (2) edge [edge, anchor=center, loop above,min distance=20mm,in=130,out=50,looseness=40] node {} (2);
    \path (2) edge [edge] node {} (4);
    \path (2) edge [edge] node {} (5);
\end{tikzpicture}
&\\
\begin{tikzpicture}[scale=0.4,auto,swap]
\centering
\tikzstyle{vertex}=[circle,fill=black,minimum size=3pt,inner sep=0pt]
\tikzstyle{edge}=[draw,->]
    \node[vertex] (2) at (0,0) {};
    \node[vertex] (4) at (-2,-2) {};
    \node[vertex] (5) at (2,-2) {};
    \node (6) at (-0.5,-2) {$\cdots$};
    \node (7) at (0,-3) {{\tiny $(k+1)$-times}};
    \node[vertex] (8) at (1,-2) {};
    \path (2) edge [edge, anchor=center, loop above,min distance=20mm,in=130,out=50,looseness=40] node {} (2);
    \path (2) edge [edge] node {} (4);
    \path (2) edge [edge] node {} (5);
    \path (2) edge [edge] node {} (8);
\end{tikzpicture}
\ar[ur]& & 
\begin{tikzpicture}[scale=0.4,auto,swap]
\centering
\tikzstyle{vertex}=[circle,fill=black,minimum size=3pt,inner sep=0pt]
\tikzstyle{edge}=[draw,->]
    \node[vertex] (2) at (0,0) {};
    \node[vertex] (5) at (2,-2) {};
    \node (6) at (0,-3) {};
    \path (2) edge [edge, anchor=center, loop above,min distance=20mm,in=130,out=50,looseness=40] node {} (2);
    \path (2) edge [edge] node {} (5);
\end{tikzpicture}
\ar[ul]\\
&
\begin{tikzpicture}[scale=0.4,auto,swap]
\centering
\tikzstyle{vertex}=[circle,fill=black,minimum size=3pt,inner sep=0pt]
\tikzstyle{edge}=[draw,->]
    \node[vertex] (2) at (0,0) {};
    \node[vertex] (4) at (2,-2) {};
    \node[vertex] (5) at (1,-2) {};
    \path (2) edge [edge, anchor=center, loop above,min distance=20mm,in=130,out=50,looseness=40] node {} (2);
    \path (2) edge [edge] node {} (4);
    \path (2) edge [edge] node {} (5);
\end{tikzpicture}
\ar[ur]\ar[ul]&
.}
\end{gathered}
\end{equation}
Finally, taking $G:=L_{2n-1}$, $F:=M_n$, and $E:=C_{k-1}^{2n}$, we apply Corrolary~\ref{cor6.4} 
to obtain the following pullback diagram of unital gauge-equivariant $*$-homomorphisms in the category of unital $U(1)$-C*-algebras ($k\geq 1$):
\begin{equation}
\xymatrix{
& C^*(C_{k-1}^{2n}) \ar[dl] \ar[dr]&\\
C^*(C_k^{2n}) \ar[dr] & & C^*(M_{n})\ar[dl]~.\\
& C^*(L_{2n}) &
}
\end{equation}

\subsection{Line graphs and the Cuntz algebra \boldmath$\mathcal{O}_2$}
\noindent
Consider the graph $R_2$ with one vertex and two edges and the surjective admissible graph homomorphism $LR_2\to R_2$ 
(see~\eqref{dualgraphmap}):
\begin{equation}\label{cuntzfold}
\begin{gathered}
\begin{tikzpicture}[scale=0.4,auto,swap]
\centering
\tikzstyle{vertex}=[circle,fill=black,minimum size=3pt,inner sep=0pt]
\tikzstyle{edge}=[draw,->]
\tikzset{every loop/.style={min distance=20mm,in=130,out=50,looseness=40}}
    \node[vertex,label=below:$e$] (1) at (-2,0) {};
    \node[vertex,label=below:$f$] (2) at (2,0) {};
    \path (1) edge [edge, anchor=center, loop above] node[above] {$ee$} (1);
    \path (2) edge [draw,<-, anchor=center, loop above] node[above] {$ff$} (2);
    \path (1) edge [edge, bend left] node[above] {$ef$} (2);
    \path (2) edge [edge, bend left] node[below] {$fe$} (1);
\end{tikzpicture}
\begin{tikzpicture}[scale=0.4,auto,swap]
\centering
\tikzstyle{vertex}=[circle,fill=black,minimum size=3pt,inner sep=0pt]
\tikzstyle{edge}=[draw,->]
    \node (1) at (0,0) {};
    \node (2) at (0,-1) {$\longrightarrow$};
    \node (3) at (0,-2) {};    
    \node (4) at (0,-3) {};
\end{tikzpicture}
\begin{tikzpicture}[scale=0.4,auto,swap]
\centering
\tikzstyle{vertex}=[circle,fill=black,minimum size=3pt,inner sep=0pt]
\tikzstyle{edge}=[draw,->]
    \node[vertex,label=below:$v$] (1) at (0,0) {};
    \node (2) at (0,-1) {};
    \node (3) at (0,-2) {};
    \path (1) edge [edge, anchor=center, loop above, min distance=20mm, in=130, out=50, looseness=40] node[below] {$e$} (1);
    \path (1) edge [edge, anchor=center, loop below, in=130, out=50, looseness=60] node[above] {$f$} (1);
\end{tikzpicture}.
\end{gathered}
\end{equation}
Here $e,f\mapsto v$, $ee,ef\mapsto e$, and $ff,fe\mapsto f$.

Now we are ready to present an application of Theorem~\ref{c*cor} that is beyond Corollary~\ref{cor6.4} of the previous section. Consider the following pushout diagram in the category of graphs and graph homomorphisms: 
\begin{equation}
\begin{gathered}
\xymatrix{
&
\begin{tikzpicture}[scale=0.4,auto,swap]
\centering
\tikzstyle{vertex}=[circle,fill=black,minimum size=3pt,inner sep=0pt]
\tikzstyle{edge}=[draw,->]
    \node[vertex] (2) at (0,0) {};
    \node[vertex] (4) at (-2,-2) {};
    \node[vertex] (5) at (2,-2) {};
    \path (2) edge [edge, anchor=center, loop above,min distance=20mm,in=130,out=50,looseness=40] node {} (2);
    \path (2) edge [edge, anchor=center, loop above,min distance=20mm,in=130,out=50,looseness=60] node {} (2);
    \path (2) edge [edge] node {} (4);
    \path (2) edge [edge] node {} (5);
\end{tikzpicture}
&\\
\begin{tikzpicture}[scale=0.4,auto,swap]
\centering
\tikzstyle{vertex}=[circle,fill=black,minimum size=3pt,inner sep=0pt]
\tikzstyle{edge}=[draw,->]
\tikzset{every loop/.style={min distance=20mm,in=130,out=50,looseness=40}}
    \node[vertex] (1) at (-2,0) {};
    \node[vertex] (3) at (2,0) {};
    \node[vertex] (4) at (-2,-2) {};
     \node[vertex] (5) at (2,-2) {};
    \path (1) edge [edge, anchor=center, loop above] node {} (1);
    \path (3) edge [draw, <-, anchor=center, loop above] node {} (3);
    \path (1) edge [edge, bend left] node {} (3);
    \path (3) edge [edge, bend left] node {} (1);
    \path (1) edge [edge] node {} (4);
    \path (3) edge [edge] node {} (5);
\end{tikzpicture}
\ar[ur]& & 
\begin{tikzpicture}[scale=0.4,auto,swap]
\centering
\tikzstyle{vertex}=[circle,fill=black,minimum size=3pt,inner sep=0pt]
\tikzstyle{edge}=[draw,->]
    \node[vertex] (1) at (0,0) {};
    \path (1) edge [edge, anchor=center, loop above, min distance=20mm, in=130, out=50, looseness=40] node {} (1);
    \path (1) edge [edge, anchor=center, loop below, in=130, out=50, looseness=60] node {} (1);
\end{tikzpicture}
\ar[ul]\\
&
\begin{tikzpicture}[scale=0.4,auto,swap]
\centering
\tikzstyle{vertex}=[circle,fill=black,minimum size=3pt,inner sep=0pt]
\tikzstyle{edge}=[draw,->]
\tikzset{every loop/.style={min distance=20mm,in=130,out=50,looseness=40}}
    \node[vertex] (1) at (-2,0) {};
    \node[vertex] (2) at (2,0) {};
    \path (1) edge [edge, anchor=center, loop above] node {} (1);
    \path (2) edge [draw,<-, anchor=center, loop above] node {} (2);
    \path (1) edge [edge, bend left] node {} (2);
    \path (2) edge [edge, bend left] node {} (1);
\end{tikzpicture}
\ar[ur]\ar[ul]&
.}
\end{gathered}
\end{equation}
Denote the left graph and the upper graph in the above diagram by $\widetilde{LR_2}$ and $\widetilde{R_2}$, respectively. 
Here the right-bottom graph homomorphism $LR_2\to R_2$ is given 
by~\eqref{cuntzfold} and the left-bottom graph homomorphism $LR_2\to \widetilde{LR_2}$ is an admissible inclusion. 
As the assumptions of Theorem~\ref{c*cor} are clearly satisfied, we obtain the following pullback diagram in the category of unital 
$U(1)$-C*-algebras and unital gauge-equivariant $*$-homomorphims:
\begin{equation}
\xymatrix{
& C^*(\widetilde{R_2}) \ar[dl] \ar[dr]&\\
C^*(\widetilde{LR_2}) \ar[dr] & & \phantom{\mathcal{O}_2 }C^*(R_2)\cong\mathcal{O}_2 \ar[dl]~.\\
& \phantom{\cong\mathcal{O}_2 }C^*(LR_2)\cong\mathcal{O}_2 &
}
\end{equation}

\subsection{Locally derived graphs and quantum balls and spheres}
Proposition~\ref{adml} and Proposition~\ref{derprop} allow us to formulate the following:
\begin{corollary}\label{lastcor}
Let $(F,\mathcal{L})$ be a base graph such that, for all $i\in I$, the group $\Gamma_i$ is finite and the inclusion 
$F_i\hookrightarrow F$ is regular, and let  $F$ be an admissible subgraph of a graph $E$.
Then the  diagram
\begin{equation}\label{loopush}
\xymatrix{
& E &\\
E_{\mathcal{L}} \ar^{\pi_E}@{>>}[ur] & & F \ar@{_{(}->}[ul]\\
& F_\mathcal{L} \ar_{\pi_F}@{>>}[ur] \ar@{_{(}->}[ul] &
}
\end{equation}
 of admissible inclusions (the left-bottom arrow and the right-top arrow) and  projection foldings
(the right-bottom arrow and the left-top arrow)  is a pushout diagram in the category $OG$ of graphs and graph homomorphisms. 
Moreover, if 
\begin{equation}\label{breakingL}
B_{E^0\setminus F^0}\cap \bigcup_{i\in I} F_i^0=\emptyset,
\end{equation} 
the induced diagram of gauge-equivariant $*$-homomorphisms
\begin{equation*}
\xymatrix{
&
C^*(E)
\ar[dl] \ar[dr]&\\
C^*(E_{\mathcal{L}})
\ar[dr] & & 
C^*(F)
\ar[dl]\\
& C^*(F_{\mathcal{L}}) &
}
\end{equation*}
is a pullback diagram in the category of C*-algebras and $U(1)$-equivariant $*$-homomorphisms.
\end{corollary}
\begin{proof}
From the definitions of the maps involved, it is straighforward to show that the diagram~\eqref{loopush} is a~pushout diagram in the category OG by checking the universal property. It remains to check the conditions (P1)-(P3) of Theorem~\ref{main}. The condition (P1) is immediately satisfied because the inclusion map 
$F_{\mathcal{L}}\hookrightarrow E_{\mathcal{L}}$ 
is injective. The condition (P2) follows from the assumption~\eqref{breakingL}. 
Indeed, as only vertices of the form $(v,g)$ can be identified, suppose that $(v,g)\in B_{E^0_\mathcal{L}\setminus F^0_\mathcal{L}}$. For starters, since $(v,g)$ is an infinite emitter, so is $v$ is by the finiteness of all $\Gamma_i$. Better still, $v$ emits infinitely many edges beyond $F^0$.
Furthermore, if $v$ would emit infinitely many or zero edges into $F^0$, then $(v,g)$ would emit infinitely many or zero edges into~$F_\mathcal{L}^0$, which contradicts the assumption that $(v,g)$ is a breaking vertex. We thus conclude that $v\in B_{E^0\setminus F^0} \cap \bigcup_{i\in I}F_i^0$, which contradicts~(7.7). Hence,
an element of $B_{E_\mathcal{L}^0\setminus F_\mathcal{L}^0}$ cannot be of the form $(v,g)$, which proves~(P2).
Finally, let $v\in B_{E_\mathcal{L}^0\setminus F_\mathcal{L}^0}$. Then $\pi_F^0(v)=\pi_E^0(v)$ is an infinite emitter in $E$
because $\pi_E^1$ is proper. On the other hand, $v\in\mathrm{reg}(F_\mathcal{L})$, $(\pi_F^0)^{-1}(\pi_F^0(v))=\{v\}$
and $\pi_F^1$ is surjective, so $\pi_F^0(v)\in\mathrm{reg}(F)$. Therefore, $\pi_E^0(v)\in B_{E^0\setminus F^0}$,
 which proves~(P3).
\end{proof}

We end the paper by exemplifying Corollary~\ref{lastcor}. We begin with a very simple example
\begin{equation}
\begin{gathered}
\xymatrix{
&
\begin{tikzpicture}[scale=0.4,auto,swap]
\centering
\tikzstyle{vertex}=[circle,fill=black,minimum size=3pt,inner sep=0pt]
\tikzstyle{edge}=[draw,->]
\tikzset{every loop/.style={min distance=20mm,in=130,out=50,looseness=40}}
    \node[vertex] (1) at (-4,0) {};
    \node[vertex] (2) at (0,0) {};
    \path (1) edge [edge, anchor=center, loop above] node {} (1);
    \path (1) edge [edge] node {} (2);
\end{tikzpicture}
&\\
\begin{tikzpicture}[scale=0.4,auto,swap]
\centering
\tikzstyle{vertex}=[circle,fill=black,minimum size=3pt,inner sep=0pt]
\tikzstyle{edge}=[draw,->]
\node (0) at (5,0) {};
    \node[vertex] (1) at (0,0) {};
    \node[vertex] (2) at (1,-1) {};
    \node[vertex] (3) at (2,-1) {};
    \node[vertex] (4) at (3,0) {};
    \node[vertex] (5) at (3,1) {};
    \node[vertex] (6) at (2,2) {};
    \node[vertex] (7) at (1,2) {};
    \node[rotate=60] (8) at (0.5,1) {{\tiny $\cdots$}};
    \node[vertex] (10) at (6,-1) {};
    \path (1) edge[edge] node {} (2);
    \path (2) edge[edge] node {} (3);
    \path (3) edge[edge] node {} (4);
    \path (4) edge[edge] node {} (5);
    \path (5) edge[edge] node {} (6);
    \path (6) edge[edge] node {} (7);
    \path (3) edge[edge] node {} (10);
\end{tikzpicture}
\ar[ur]& & 
\begin{tikzpicture}[scale=0.4,auto,swap]
\centering
\tikzstyle{vertex}=[circle,fill=black,minimum size=3pt,inner sep=0pt]
\tikzstyle{edge}=[draw,->]
\tikzset{every loop/.style={min distance=20mm,in=130,out=50,looseness=40}}
    \node[vertex] (1) at (-2,0) {};
    \path (1) edge [edge, anchor=center, loop above] node {} (1);
\end{tikzpicture}
\ar[ul]\\
&
\begin{tikzpicture}[scale=0.4,auto,swap]
\centering
\tikzstyle{vertex}=[circle,fill=black,minimum size=3pt,inner sep=0pt]
\tikzstyle{edge}=[draw,->]
\node (0) at (5,0) {};
    \node[vertex] (1) at (2,0) {};
    \node[vertex] (2) at (3,-1) {};
    \node[vertex] (3) at (4,-1) {};
    \node[vertex] (4) at (5,0) {};
    \node[vertex] (5) at (5,1) {};
    \node[vertex] (6) at (4,2) {};
    \node[vertex] (7) at (3,2) {};
    \node[rotate=60] (8) at (2.5,1) {{\tiny $\cdots$}};
    \node (9) at (1.5,1) {};
    \path (1) edge[edge] node {} (2);
    \path (2) edge[edge] node {} (3);
    \path (3) edge[edge] node {} (4);
    \path (4) edge[edge] node {} (5);
    \path (5) edge[edge] node {} (6);
    \path (6) edge[edge] node {} (7);
\end{tikzpicture}
\ar[ur]\ar[ul]&
}
\end{gathered}
\end{equation}
of a pushout diagram of graphs in the category OG that induces a pullback diagram in the category of unital $U(1)$-C*-algebras.
The right-top $*$-homomorphism of this pullback diagram is the symbol map $\sigma$ giving rise to the standard
 short exact sequence for the Toeplitz C*-algebra~$\mathcal{T}$:
\begin{equation}
0\longrightarrow\mathcal{K}\longrightarrow\mathcal{T}\stackrel{\sigma}{\longrightarrow}C(S^1)\longrightarrow 0.
\end{equation}
Here $\mathcal{K}$ stands for the C*-algebra of compact operators. Tensoring this short exact sequence with the matrix algebra
$M_n(\mathbb{C})$, we obtain
\begin{equation}
0\longrightarrow\mathcal{K}\longrightarrow\mathcal{T}\otimes M_n(\mathbb{C})\stackrel{}{\longrightarrow}C(S^1)
\otimes M_n(\mathbb{C})\longrightarrow 0.
\end{equation}
Now, possibly except for the middle term, the above short exact sequence coincides with the short exact sequence obtained
from the left-bottom $*$-homomorphism of the induced pullback diagram.

To go beyond the simple setting of the above example, 
consider the graphs $M_2$ and $L_3$ of the  C*-algebra $C(B^4_q)$ of the even quantum ball $B^4_q$ and 
the C*-algebra $C(S^3_q)$ of the boundary quantum sphere~$S^3_q$,
 respectively. Recall that
$L_3^0:=\{v_1,v_2\}$, $L^1_3:=\{e_{11},e_{12},e_{22}\}$, and $s_{L_3}(e_{ij}):=v_i\,$, $t_{L_3}(e_{ij}):=v_j\,$, $1\leq i\leq j\leq 2$.
Next, let $\Gamma_1:=\mathbb{Z}/3\mathbb{Z}$, $\Gamma_2:=\mathbb{Z}/2\mathbb{Z}$, and 
let $F^0_i:=\{v_i\}$, $F^1_i:=\{e_{ii}\}$, define  subgraphs $F_i\subseteq L_3$, $i=1,2$. 
Finally, put $\gamma_n$ for the generator of $\mathbb{Z}/n\mathbb{Z}$ and define
\begin{equation}
\mathcal{L}:t_{L_3}^{-1}(v_1)\cup t_{L_3}^{-1}(v_2)\longrightarrow\mathbb{Z}/3\mathbb{Z}\sqcup\mathbb{Z}/2\mathbb{Z},\quad 
e_{11}\longmapsto \gamma_3,
\quad e_{12}\longmapsto \gamma_2,\quad e_{22}\longmapsto \gamma_2\,.
\end{equation}
Now the pushout of Corollary~\ref{lastcor} takes the form
\begin{equation}
\begin{gathered}
\xymatrix{
&
\begin{tikzpicture}[scale=0.4,auto,swap]
\centering
\tikzstyle{vertex}=[circle,fill=black,minimum size=3pt,inner sep=0pt]
\tikzstyle{edge}=[draw,->]
\tikzset{every loop/.style={min distance=20mm,in=130,out=50,looseness=40}}
    \node[vertex] (1) at (-4,0) {};
    \node[vertex] (2) at (0,0) {};
    \node[vertex] (3) at (4,0) {};
    \path (1) edge [edge, anchor=center, loop above] node {} (1);
    \path (2) edge [draw,<-, anchor=center, loop above] node {} (2);
    \path (1) edge [edge] node {} (2);
    \path (1) edge [edge, bend right] node {} (3);
    \path (2) edge [edge] node {} (3);
\end{tikzpicture}
&\\
\begin{tikzpicture}[scale=0.4,auto,swap]
\centering
\tikzstyle{vertex}=[circle,fill=black,minimum size=3pt,inner sep=0pt]
\tikzstyle{edge}=[draw,->]
    \node[vertex] (1) at (-2,0) {};
    \node[vertex] (2) at (2,0) {};
    \node[vertex] (11) at (-3,2) {};
    \node[vertex] (12) at (-1,2) {};
    \node[vertex] (21) at (2,2) {};
    \node[vertex] (3) at (6,0) {};
    \path (1) edge [edge,bend left] node {} (11);
    \path (11) edge [edge, bend left] node {} (12);
    \path (12) edge [edge, bend left] node {} (1);
    \path (1) edge [edge] node {} (2);
    \path (1) edge [edge] node {} (21);
    \path (2) edge [edge, bend left] node {} (21);
    \path (21) edge [edge, bend left] node {} (2);
    \path (2) edge [edge] node {} (3);
    \path (1) edge [edge, bend right] node {} (3);
\end{tikzpicture}
\ar[ur]& & 
\begin{tikzpicture}[scale=0.4,auto,swap]
\centering
\tikzstyle{vertex}=[circle,fill=black,minimum size=3pt,inner sep=0pt]
\tikzstyle{edge}=[draw,->]
\tikzset{every loop/.style={min distance=20mm,in=130,out=50,looseness=40}}
    \node[vertex] (1) at (-2,0) {};
    \node[vertex] (2) at (2,0) {};
    \path (1) edge [edge, anchor=center, loop above] node {} (1);
    \path (2) edge [draw,<-, anchor=center, loop above] node {} (2);
    \path (1) edge [edge] node {} (2);
\end{tikzpicture}
\ar[ul]\\
&
\begin{tikzpicture}[scale=0.4,auto,swap]
\centering
\tikzstyle{vertex}=[circle,fill=black,minimum size=3pt,inner sep=0pt]
\tikzstyle{edge}=[draw,->]
    \node[vertex] (1) at (-2,0) {};
    \node[vertex] (2) at (2,0) {};
    \node[vertex] (11) at (-3,2) {};
    \node[vertex] (12) at (-1,2) {};
    \node[vertex] (21) at (2,2) {};
    \path (1) edge [edge,bend left] node {} (11);
    \path (11) edge [edge, bend left] node {} (12);
    \path (12) edge [edge, bend left] node {} (1);
    \path (1) edge [edge] node {} (2);
    \path (1) edge [edge] node {} (21);
    \path (2) edge [edge, bend left] node {} (21);
    \path (21) edge [edge, bend left] node {} (2);
\end{tikzpicture}
\ar[ur]\ar[ul]&
}
\end{gathered}
\end{equation}
and gives rise to the pullback diagram in the category of unital $U(1)$-C*-algebras and unital $U(1)$-equivariant $*$-homomorphisms
\begin{equation}
\xymatrix{
&
C(B^4_q)
\ar[dl] \ar[dr]&\\
C^*((M_2)_{\mathcal{L}})
\ar[dr] & & 
C(S^3_q).
\ar[dl]\\
& C^*((L_3)_{\mathcal{L}}) &
}
\end{equation}
It is straightforward to generalize the above example to any even quantum ball and its boundary quantum sphere and any finite cyclic groups.

\section*{Acknowledgements} 
\noindent
We would like to thank Alexander Frei for numerous conversations about graph C*-algebras and Cuntz--Pimsner algebras, 
Roozbeh Hazrat for drawing our attention to the isomorphism $C^*(LR_2)\cong\mathcal{O}_2$,  Jack Spielberg
for showing us the isomorphism $C^*((R_2)_\mathcal{L})\cong M_2(\mathcal{O}_3)$, 
and the referee for numerous very useful suggestions. We hereby acknowledge
that research visits to Warsaw of the second author and Roozbeh Hazrat, which vastly facilitated finishing this paper,
 were partially financed by the University of Warsaw Thematic Research Programme \emph{Quantum
Symmetries}.
This work is part of the project ``Applications of graph algebras and higher-rank graph algebras
in noncommutative geometry'' partially supported by NCN grant UMO-2021/41/B/ST1/03387.

\end{document}